\newtheorem*{theorem*}{Theorem}
\newtheorem{theorem}{Theorem}
\newtheorem{lemma}[theorem]{Lemma}
\newtheorem{corollary}[theorem]{Corollary}
\newtheorem{proposition}[theorem]{Proposition}
\theoremstyle{definition}
\newtheorem{remark}[theorem]{Remark}
\newtheorem{definition}[theorem]{Definition}
\font\sc=rsfs10
\newcommand{\cC}{\sc\mbox{C}\hspace{1.0pt}}
\font\scc=rsfs7
\newcommand{\ccC}{\scc\mbox{C}\hspace{1.0pt}}
\newcommand{\ucC}{\underline{\cC}}
\newcommand{\uccC}{\underline{\ccC}}
\newcommand{\addcat}{\mathfrak{A}_{\Bbbk}^f}
\newcommand{\Hom}{\mathrm{Hom}}
\newcommand{\End}{\mathrm{End}}
\newcommand{\lmod}{\text{-}\mathrm{mod}}
\newcommand{\proj}{\text{-}\mathrm{proj}}
\newcommand{\add}{\operatorname{add}}
\newcommand{\inj}{\operatorname{inj}}
\newcommand{\Inj}{\operatorname{Inj}}
\newcommand{\comod}{\operatorname{comod}}
\newcommand{\rA}{\mathrm{A}}
\newcommand{\rB}{\mathrm{B}}
\newcommand{\rC}{\mathrm{C}}
\newcommand{\rD}{\mathrm{D}}
\newcommand{\rF}{\mathrm{F}}
\newcommand{\rG}{\mathrm{G}}
\newcommand{\rI}{\mathrm{I}}
\newcommand{\rJ}{\mathrm{J}}
\newcommand{\rK}{\mathrm{K}}
\newcommand{\rM}{\mathrm{M}}
\newcommand{\rN}{\mathrm{N}}
\newcommand{\rQ}{\mathrm{Q}}
\newcommand{\rX}{\mathrm{X}}
\newcommand{\rY}{\mathrm{Y}}
\newcommand{\rZ}{\mathrm{Z}}
\newcommand{\A}{\mathcal{A}}
\newcommand{\B}{\mathcal{B}}
\newcommand{\C}{\mathcal{C}}
\newcommand{\I}{\mathcal{I}}
\newcommand{\J}{\mathcal{J}}
\def\L{{\mathcal{L}}}
\newcommand{\M}{\mathcal{M}}
\newcommand{\N}{\mathcal{N}}
\def\S{\mathcal{S}}
\def\K{{\mathcal{K}}}
\newcommand{\id}{\mathrm{id}}
\newcommand{\one}{\mathbbm{1}}
\newcommand{\Id}{\mathrm{Id}}
\newcommand{\bfM}{\mathbf{M}}\newcommand{\bfK}{\mathbf{K}}
\newcommand{\bfG}{\mathbf{G}}
\newcommand{\bfP}{\mathbf{P}}
\newcommand{\bfN}{\mathbf{N}}
\newcommand{\bfC}{\mathbf{C}}
\newcommand{\ti}{\mathtt{i}}
\begin{document}

\title[Coidempotent subcoalgebras and short exact sequences of $2$-representations]{Coidempotent subcoalgebras and short exact sequences of finitary $2$-representations}
\author{Aaron Chan}
\address{Graduate School of Mathematics, Nagoya University, Furocho, Chikusaku, Nagoya, JAPAN}
\email{aaron.kychan@gmail.com}
\urladdr{}

\author{Vanessa Miemietz}
\address{
School of Mathematics, University of East Anglia, Norwich NR4 7TJ, UK}
\email{v.miemietz@uea.ac.uk}
\urladdr{https://www.uea.ac.uk/~byr09xgu/}


\begin{abstract}
In this article, we study short exact sequences of finitary $2$-representations of a weakly fiat $2$-category. We provide a correspondence between such short exact sequences with fixed middle term and coidempotent subcoalgebras of a coalgebra $1$-morphism defining this middle term. We additionally relate these to recollements of the underlying abelian $2$-representations.
\end{abstract}

\maketitle

\section*{Introduction}

The subject of $2$-representation theory originated from \cite{CR,KhLa,Ro} and is the higher categorical 
analogue of the classical representation theory of algebras. The articles \cite{MM1}--\cite{MM6} develop the $2$-categorical analogue of finite-dimensional algebras and their finite-dimensional modules, by defining and studying finitary $2$-categories and their finitary $2$-representations. One of the fundamental questions in representation theory is to find the simple representations of a given algebra. The question of how to define the $2$-categorical analogue of these was answered in \cite{MM5} where the notion of simple transitive $2$-representations was defined and a Jordan-H\"{o}lder theory for finitary $2$-categories is provided. Since then, there has been considerable effort to classify simple transitive $2$-representations for certain classes of finitary $2$-categories.

Most of the $2$-categories appearing in the categorification of Lie theoretic objects are examples of the so-called weakly fiat $2$-categories. An important defining property of weakly fiat $2$-categories is that, roughly speaking, all $1$-morphisms have adjoints (often called duals for monoidal categories, which, after strictification, can be viewed as $2$-categories with a single object).
The article \cite{MMMT} 
shows that every finitary $2$-representation over a weakly fiat $2$-category can be realised as the category of injective right comodules over a coalgebra $1$-morphism.
This gives a new approach to studying finitary $2$-representations.
It is shown in \cite{MMMZ} that a coalgebra $1$-morphism is cosimple if and only if the corresponding $2$-representation is simple transitive.
In other words, classifying simple transitive $2$-representations is equivalent to classifying cosimple coalgebra $1$-morphisms (up to Morita--Takeuchi equivalences).

This article takes a slightly different direction.  After all, another important aspect of the theory of modules over algebras is homological algebra, i.e. how to build all representations from simple ones.  The $2$-analogue for homological theory associated to finitary $2$-categories has so far only been studied in \cite{CM}, where an analogue of Ext-groups are introduced and studied.  In this article, instead, we look back at the definition of short exact sequence of (finitary) $2$-representations used in \cite{CM} (originally from \cite{SVV}), and relate them to comodules categories over coalgebra $1$-morphisms.
The questions we ask are the following.
\begin{itemize}
\item How do we realise a finitary sub-$2$-representation in the language of comodule theory over coalgebra $1$-morphisms?
\item When can we fit the quotient morphism of $2$-representations induced by a subcoalgebra into a short exact sequence of $2$-representation?
\item What is the relation between the coalgebra $1$-morphisms generating the three finitary $2$-representations appearing in a short exact sequence of $2$-representations?
\end{itemize}

It turns out that the answer is closely related to coidempotent subcoalgebras (see Definition \ref{def:coidem}) and recollements of abelian categories.  More precisely, our main theorem (Theorem \ref{thm-main}) states that
\begin{itemize}
\item given a coidempotent subcoalgebra $\rD$ of a coalgebra $1$-morphism $\rC$, we can construct a coalgebra $1$-morphism $\rA$ from a certain injective $\rC$-comodule $\rI$ such that there is a short exact sequence
  $$0 \longrightarrow \inj_{\uccC}(\rA) \xrightarrow{-\square_\rA\rI} \inj_{\uccC}(\rC)\xrightarrow{-\square_\rC\rD}\inj_{\uccC}(\rD) \longrightarrow 0$$
  of $2$-representations, where $-\square_\rY\rX$ denotes the cotensor product functor;
\item given a short exact sequence of $2$-representations
  $$0\longrightarrow\bfN\longrightarrow\bfM\longrightarrow\bfK\longrightarrow 0$$
  and choosing a coalgebra $1$-morphism $\rC$ with $\bfM\cong \inj_{\uccC}(\rC)$, there exists a subcoalgebra $\rD$ of $\rC$, unique up to isomorphism and necessarily coidempotent, such that $\inj_{\uccC}(\rD)$ is equivalent to the quotient $2$-representation $\bfK$.
\end{itemize}

Moreover, passing to the abelianised $2$-representations, in the above situation, we have a recollement of abelian categories
\begin{equation*}
\xymatrix{  \comod_{\uccC}(\rD)\ar^{-\square_\rD\rD_\rC}[rr]&& \comod_{\uccC}(\rC)\ar^{[\rI,-]}[rr]\ar@<1ex>@/^/^{-\square_\rC\rD}[ll]\ar@<-2ex>@/_/_{[\rD,-]}[ll]&& \comod_{\uccC}(\rA),\ar@<1ex>@/^/^{-\square_\rA\rI}[ll]\ar@<-2ex>@/_/_{[[\rI,\rC],-]}[ll]
}\end{equation*}
where $[\rX,-]$ denotes the internal hom functor.

The paper is organised as follows. In Section \ref{oldstuff}, we provide a summary of the setup and results from previous articles on the subject which we need for our purposes. In Section \ref{prelims}, we discuss some preliminary results about recollements and functors between comodules categories. We also provide a correspondence between subcoalgebras of a given coalgebra and subcategories of its comodule categories that are closed under subobjects, quotients and closed under the action by the $2$-category, generalising results in \cite{NT}. In Section \ref{coidext}, we define coidempotent subcoalgebras, show that they correspond to Serre subcategories of the category of comodules, and discuss their relationship with recollements. This then leads to the statement and the proof of the main theorem in the final subsection. Finally, we provide some examples in Section \ref{sec:eg}.

{\bf Acknowledgements.} AC is supported by a JSPS International Research Fellowship. Part of this research was carried out during a visit of AC to the University of East Anglia, whose hospitality is gratefully acknowledged.

\section{Recollections.}\label{oldstuff}

Let $\Bbbk$ be an algebraically closed field.

\subsection{$2$-categories and $2$-representations}\label{s2.1}

We start by recalling some terminology on finitary categories and $2$-categories. We refer to reader to  \cite{Le,McL} for more detail on general $2$-categories and to \cite{MM1,MM2,MM3,MM4,MM5,MM6} for more detail on 
$2$-representations of finitary $2$-categories. 

A $\Bbbk$-linear category is called {\bf finitary} if it is idempotent complete, has only finitely many isomorphism classes of indecomposable objects and all morphism spaces are finite dimensional. The collection of finitary $\Bbbk$-linear categories, together with additive $\Bbbk$-linear functors and all natural transformations between such functors, forms a $2$-category denoted by $\addcat$.

In \cite{MM1}, a {\bf finitary} $2$-category $\cC$ was defined to be a $2$-category such that
\begin{itemize}
\item $\cC$ has finitely many objects;
\item each morphism category $\cC(\mathtt{i},\mathtt{j})$ is in $\addcat$;
\item horizontal composition is biadditive and bilinear;
\item for each $\mathtt{i}\in\cC$, the identity $1$-morphism $\mathbbm{1}_\mathtt{i}$ is indecomposable.
\end{itemize}
We denote by $\circ_{0}$ and $\circ_{1}$ the horizontal and vertical compositions in $\cC$, respectively.

A finitary $2$-category $\cC$ is called {\bf weakly fiat} if it has a weak
anti-equivalence $(-)^*$ reversing the direction of both $1$- and $2$-morphisms, such that, for a
$1$-morphism $\mathrm{F}$, the pair $(\mathrm{F},\mathrm{F}^*)$ is an adjoint pair, see \cite[Subsection~2.4]{MM1}. It is called {\bf fiat} if $(-)^*$ is weakly involutive. We denote the weak inverse of $(-)^*$ by ${}^*(-)$, obtaining another adjoint pair $({}^*\mathrm{F},\mathrm{F})$.

A {\bf finitary} $2$-representation of $\cC$ is a $2$-functor from $\cC$ to $\addcat$.  An important example of a finitary $2$-representation is, for each $\ti\in\cC$, the {\bf principal}
$2$-representation $\mathbf{P}_{\mathtt{i}}:=\cC(\mathtt{i},{}_-)$. 

We can (injectively) abelianise both the $2$-category $\cC$ and, for a $2$-representation $\bfM$, the category $\M:=\prod_{\ti\in\ccC}\bfM(\ti)$ and use the notation $\underline{(-)}$ for the injective abelianisation ($2$)-functor. For the $2$-category, this needs to be done in a rather technical way, see \cite[Section 3.2]{MMMT} to preserve strictness of horizontal composition. Note that, provided that $\cC$ is weakly fiat, composition in $\ucC$ is left exact in both variables. Indeed, left and right multiplication by $1$-morphisms in $\cC$ is exact thanks to the existence of adjoints, and all $1$-morphisms of $\ucC$ can be regarded as kernels of $2$-morphisms in $\cC$, whence application of the snake lemma yields the claim, cf. \cite[Subsection 3.1]{MMMZ}.

For $\M$, it is equivalently possible to use the classical diagrammatic abelianisation, see \cite{Fr}, or \cite[Section 3.1]{MMMT} for a presentation adapted to our notation. This induces an {\bf abelian} $2$-representation $\underline{\bfM}$ on $\underline{\M}$.

Both finitary and abelian $2$-representations of $\cC$ form $2$-categories,
denoted $\cC$-afmod and $\cC$-mod, respectively, in which $1$-morphisms are strong 
$2$-natural transformations, which we also simply call morphisms of $2$-represenations, and $2$-morphisms are modifications, see \cite[Section 2]{MM3} for details. 

In slight abuse of notation, we will, for any $2$-representation $\mathbf{M}$, write $\rF\,X$ rather than $\bfM(\rF)(X)$.

A $2$-representation $\mathbf{M}\in\cC$-afmod is said to be {\bf transitive}, cf. \cite[Subsection 3.1]{MM5},
if, for any indecomposable objects $X,Y\in\M$, there exists a $1$-morphism $\rF$ in $\cC$ such that 
$Y$ is isomorphic to a direct summand of $\rF\,X$. We say that a transitive $2$-representation $\bfM$
is {\bf simple transitive}, cf. \cite[Subsection 3.5]{MM5}, if
$\M$ has no proper $\cC$-invariant ideals.
In \cite[Section 4]{MM5}, it was proved that every $\bfM\in\cC$-afmod has a {\bf weak Jordan-H{\"o}lder series} with transitive subquotients, and the list of their respective simple transitive quotients is unique up to permutation and equivalence.

\subsection{Coalgebra $1$-morphisms and their comodule categories.}

A {\bf coalgebra $1$-morphism} in $\ucC$ is a coalgebra object in $\coprod_{\ti\in\ccC}\ucC(\ti,\ti)$, i.e. a direct sum $\rC$ of $1$-morphisms in $ \coprod_{\ti\in\ccC}\ucC(\ti,\ti)$equipped with $2$-morphisms $\mu_\rC\colon \rC\to\rC\rC$ and $\varepsilon_\rC\colon\rC\to\mathbbm{1}=\bigoplus_{\ti\in\ccC}\mathbbm{1}_\ti$, called comultiplication and counit respectively, satisfying coassociativity $(\mu_\rC\circ_0\id_\rC)\circ_1\mu_\rC = (\id_\rC\circ_0\mu_\rC)\circ_1\mu_\rC$ and counitality $(\id_\rC\circ_0\varepsilon_\rC)\circ_1\mu_\rC = \id_\rC = (\varepsilon_\rC\circ_0\id_\rC)\circ_1\mu_\rC$.

A {\bf right} (resp. {\bf left}) {\bf comodule} over $\rC$ is a $1$-morphism $\rM$ in $\ucC$ together with a coaction $\rho_\rM\colon \rM\to\rM\rC$ (resp. $\lambda_\rC\colon \rM\to\rC\rM$) such that $(\id_\rM\circ_0\mu_\rC)\circ_1\rho_\rM = (\rho_\rM\circ_0\id_\rC)\circ_1\rho_\rM$ and $(\id_\rM\circ_0\varepsilon_\rC)\circ_1\rho_\rM = \id_\rM$ (resp. $(\mu_\rC\circ_0\id_\rM)\circ_1\rho_\rM = (\id_\rC\circ_0\lambda_\rM)\circ_1\rho_\rM$ and $(\varepsilon_\rC\circ_0\id_\rM)\circ_1\rho = \id_\rM$). Note that the last condition implies that all coaction maps are monomorphisms in $\ucC$.

The {\bf cotensor product over $\rC$} of a right $\rC$-comodule $\rM$ with a left $\rC$-comodule $\rN$ is the kernel of the map $$\rM\rN \xrightarrow{\rho_\rM\circ_0\id_\rN-\id_\rM\circ_0\lambda_\rN}\rM\rC\rN.$$

\subsection{Internal homs and $2$-representations.}\label{coalgrecall} Let $\cC$ be a weakly fiat $2$-category.
This subsection is essentially a summary of  \cite[Sections 4]{MMMT}. Note that results there were stated for a fiat $2$-category, but none of the proofs use involutivity of $(-)^*$, hence all proofs go through verbatim for the weakly fiat case.

Let $\mathbf{M}$ be a finitary $2$-representation of $\cC$ and $N\in {\M}$. Recall the internal hom functor $[N,-]\colon \underline{\M} \to \ucC$, which is defined as the left adjoint to the evaluation of the action on $N$, i.e.
$$\Hom_{\underline{\M}}(-, \rF N) \cong \Hom_{\underline{\ccC}}([N,-], \rF) $$ for all $\rF \in \ucC$. The internal hom $[N,N]$ has the structure of a coalgebra $1$-morphism and for any $M\in \underline{\M}$, $[N,M]$ has the structure of a right $[N,N]$-comodule in $\underline{\cC}$. The category consisting of such right $[N,N]$-comodule in $\underline{\cC}$ carries the structure of an abelian $2$-representation of $\cC$, denoted by $\comod_{\uccC}[N,N]$, and the finitary  $2$-representation on the subcategory of injective right $[N,N]$-comodules is denoted by $\inj_{\uccC}[N,N]$. The latter is equivalent to the additive closure in $\comod_{\uccC}[N,N]$ of $\{\rF[N,N] \mid \rF\in \cC\}$.
Note that, as shown in \cite[Proof of Lemma 6]{MMMT}, $\rF[X,Y]\cong [X,\rF Y]$ for all $X, Y\in \M$ and all $\rF\in\cC$; the same holds also for $X, Y\in \underline{\M}$ by the same proof.
Note also that $\comod_{\uccC}[N,N]$ is equivalent to $\underline{\inj_{\uccC}[N,N]}$. 

In \cite[Section 4]{MMMT}, it was shown that when $\bfM$ is transitive, then the realisation morphism $[N,-]$ defines an equivalence of abelian $2$-representations between $\underline{\bfM}$ and $\comod_{\uccC}[N,N]$, and also restricts to an equivalence of finitary $2$-representations between $\bfM$ and $\inj_{\uccC}[N,N]$.
In fact, the same proof works for any $\bfG_\bfM(N)$, i.e. 
for arbitrary $N\in \M$, the realisation morphism induces an equivalence of finitary 2-representations between $\bfG_\bfM(N)$ and $\inj_{\uccC}[N,N]$.  In particular, one can always realise a finitary 2-representation as $\inj_{\uccC}[N,N]$ by taking $N$ as the direct sum of all indecomposable objects (up to isomorphisms).
As such, from now on, we do not distinguish between comodules (resp. injective comodules) over a coalgebra 1-morphism and objects of an abstract abelian (respectively, finitary) $2$-representation.

\subsection{Extensions of $2$-representations.}

A sequence
\begin{equation}\label{eq1}
\xymatrix{
0\ar[r]& \mathcal{A}\ar[r]^{\mathrm{F}}& \mathcal{B}\ar[r]^{\mathrm{G}}& \mathcal{C}\ar[r]& 0
} 
\end{equation}
in $\addcat$ will be called {\bf short exact} (cf. \cite[Subsection~2.2.1]{SVV}) 
provided that
\begin{itemize}
\item $\mathrm{F}$ is full and faithful;
\item $\mathrm{G}$ is full and dense;
\item the kernel of $\mathrm{G}$ coincides with the ideal of
$\mathcal{B}$ generated by $\mathrm{F}(\mathcal{A})$.
\end{itemize}

A sequence of morphisms $\Phi,\Psi$ of additive $2$-representations
\begin{equation*}
\xymatrix{
0\ar[r]& \bfN\ar[r]^{\Phi}& \bfM\ar[r]^{\Psi}& \bfK \ar[r]& 0
} 
\end{equation*}
will be called an {\bf extension} of $2$-representations, provided that the underlying sequence
\begin{equation*}
\xymatrix{
0\ar[r]& \N\ar[r]^{\Phi}& \M\ar[r]^{\Psi}& \K \ar[r]& 0
} 
\end{equation*}
is short exact in $\addcat$, where in the second sequence $\Phi$ and $\Psi$ refer to the underlying additive functors.

\section{Preliminary results}\label{prelims}

In this section, we collect some preliminary results leading towards our main theorem.

\subsection{Recollements of abelian categories.}

Recall that a diagram 
\begin{equation*}
\xymatrix{  \A\ar^{i}[rr]&& \B\ar^{e}[rr]\ar@<1ex>@/^/^{p}[ll]\ar@<-2ex>@/_/_{q}[ll]&& \C\ar@<1ex>@/^/^{r}[ll]\ar@<-2ex>@/_/_{l}[ll]
}\end{equation*}
of abelian categories is a recollement provided that
\begin{itemize}
\item $(q,i,p)$ and $(l,e,r)$ are adjoint triples;
\item the functors $l,r$ and $i$ are fully faithful;
\item the image of $i$ is a Serre subcategory, which is the kernel of $e$. 
\end{itemize}

\begin{lemma}\label{recolllem}
Let 
\begin{equation*}
\xymatrix{  \A\ar^{i}[rr]&& \B\ar^{e}[rr]\ar@<1ex>@/^/^{p}[ll]\ar@<-2ex>@/_/_{q}[ll]&& \C\ar@<1ex>@/^/^{r}[ll]\ar@<-2ex>@/_/_{l}[ll]
}\end{equation*}
be a recollement of abelian categories with enough injectives, where $(q,i,p)$ and $(l,e,r)$ are adjoint triples. Then the sequence given by $r$ and $p$ restricts to a short exact sequence of additive categories
\begin{equation}\label{injrecoll}
0\to \Inj\C\overset{r}{\to}\Inj\B\overset{p}{\to}\Inj\A \to 0.
\end{equation}
between the full subcategories of injective objects.
\end{lemma}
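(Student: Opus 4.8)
The plan is to exploit the defining adjunctions of the recollement together with the standard fact that right adjoints preserve injectives. First I would check that $r$ and $p$ send injectives to injectives: since $e$ is exact (being a member of two adjoint triples, it has both a left and a right adjoint, hence is exact) and $r$ is its right adjoint, $r$ preserves injectives; likewise $i$ is exact (its image is a Serre subcategory), $p$ is right adjoint to $i$, so $p$ preserves injectives. Thus the functors in \eqref{injrecoll} are well defined on the injective subcategories, and $p\circ r$ is naturally isomorphic to the zero functor because $e\circ i=0$ forces $\Hom(i(-),r(-))\cong\Hom(e\,i(-),-)=0$, i.e. no nonzero map from the image of $i$ into the image of $r$; dually one argues $p\,r=0$ directly from $e\,r\cong\Id_\C$ and $e\,i=0$.

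Next I would verify the three bullet conditions of short exactness in $\addcat$ for the sequence $\Inj\C\xrightarrow{r}\Inj\B\xrightarrow{p}\Inj\A$. That $r$ is fully faithful is part of the recollement axioms, and it stays fully faithful after restriction to injectives. For $p$ being full and dense: given an injective object $A\in\A$, pick an injective envelope, or rather use that $i(A)$ embeds into some injective $I\in\B$; applying $p$ and using $p\,i\cong\Id_\A$ gives $A\cong p\,i(A)\hookrightarrow p(I)$, and because $p$ is exact on injectives (as $i$ is exact, $p$ sends injective resolutions to injective resolutions, so in particular $p$ is exact when restricted appropriately) one can split off $A$ as a summand of $p(I)$, giving density; fullness of $p$ on injectives should follow from the adjunction $\Hom_\B(i(A),I)\cong\Hom_\A(A,p(I))$ combined with the fact that every map $p(I)\to p(J)$ between images of injectives lifts, which one teases out of the unit/counit of $(q,i,p)$ together with $p\,i\cong\Id$. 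For the kernel condition, I would show that the ideal of $\Inj\B$ generated by $r(\Inj\C)$ is exactly the morphisms killed by $p$: a morphism $f\colon I\to J$ in $\Inj\B$ has $p(f)=0$ iff, after taking the canonical triangle relating $I$ to $i\,q(I)$ and $r\,e(I)$, it factors through the part of $I$ in the image of $r$; here one uses that for injective $I$ the canonical sequence $0\to r\,e(I)\to I\to i\,q(I)\to 0$ (or its relevant truncation) splits, so every $I\in\Inj\B$ decomposes, up to the ideal generated by $r(\Inj\C)$, onto something in $i(\Inj\A)$, and $p$ detects exactly this splitting.

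The main obstacle I anticipate is the kernel/ideal condition — specifically, establishing that for an injective object $I\in\B$ the canonical recollement sequence $0\to i\,q(I)\to I\to r\,e(I)\to 0$ (with the appropriate orientation) actually \emph{splits}, since without enough injectives this is false in general, and the hypothesis "with enough injectives" is presumably there precisely to make this work via an Ext-vanishing argument, i.e. $\mathrm{Ext}^1_\B(r\,e(I),i\,q(I))=0$ because $i\,q(I)$ is a subobject of an injective and one can compute the $\mathrm{Ext}$ using the Serre subcategory structure, or more simply because $I$ itself is injective so the subobject inclusion into $I$ of the torsion part splits. Once that splitting is in hand, density of $p$, fullness of $p$, and the identification of $\ker p$ with the ideal generated by $r(\Inj\C)$ all fall out by chasing the adjunction isomorphisms; the bookkeeping is routine but the splitting is the crux, so I would isolate it as a sublemma and prove it first.
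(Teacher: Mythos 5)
The first half of your plan (preservation of injectives by $r$ and $p$, $pr=0$, full faithfulness of $r$, and density/fullness of $p$ via $pi\cong\Id_\A$) matches the paper and is fine. The problem is exactly the step you yourself identify as the crux: the claimed splitting. First, the orientation is off: the canonical recollement sequences are $0\to ip(X)\to X\to re(X)$ (only left exact) and $le(X)\to X\to iq(X)\to 0$ (only right exact); neither of the sequences you write down, $0\to re(I)\to I\to iq(I)\to 0$ or $0\to iq(I)\to I\to re(I)\to 0$, exists in general. More seriously, even with the correct sequence, the splitting is false: for an injective $Q\in\B$ the inclusion $ip(Q)\hookrightarrow Q$ need not split, because $ip(Q)$ need not be injective ($i$ is not a right adjoint of an exact functor, so it does not preserve injectives), and ``subobject of an injective'' gives no Ext-vanishing. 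A concrete counterexample sits inside the paper's own Section 5: for the Auslander algebra $\Lambda=\End_R(R\oplus\Bbbk)$ of $R=\Bbbk[x]/(x^2)$, with the recollement cut out by the idempotent at the second vertex, the projective-injective uniserial module of length $3$ has torsion part equal to its simple socle, and that inclusion does not split. So injective objects of $\B$ do not decompose into a piece from $i(\A)$ and a piece from $r(\C)$, and your identification of $\ker p$ with the ideal generated by $r(\Inj\C)$ collapses at this point.

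The fix, which is what the paper does, is to factor the \emph{morphism} rather than split the \emph{object}: if $p(f)=0$ for $f\colon Q_1\to Q_2$ in $\Inj\B$, then since $ip$ is a subfunctor of $\Id_\B$ (it picks out the maximal subobject with composition factors in $i(\A)$) and $ip(f)=0$, the map $f$ factors through $Q_1/ip(Q_1)$. The left-exact sequence $0\to ip(Q_1)\to Q_1\to re(Q_1)$ gives a monomorphism $Q_1/ip(Q_1)\hookrightarrow r(I')$, where $I'$ is the injective hull of $e(Q_1)$ in $\C$; hence the injective hull of $Q_1/ip(Q_1)$ is a direct summand of $r(I')$ and lies in the ideal generated by $r(\Inj\C)$. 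Injectivity of $Q_2$ then lets the induced map $\bar f$ extend over that hull, so $f$ factors through an object of the ideal. No splitting of $Q_1$ is needed, and ``enough injectives'' is used only to form the hulls, not for any Ext-vanishing.
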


\proof
The sequence restricts since both $r$ and $p$ are right adjoints to exact functors and hence preserve injectives. 

By the definition of recollement, $r$ is fully faithful.  Since $pi$ is naturally isomorphic to the identity functor on $\A$ (see e.g.\cite[Proposition 2.7(ii)]{PV}), $p$ is necessarily full and dense.


It remains to show that the kernel of $p$ coincides with the ideal $\I$ in $\Inj\B$ generated by (the full subcategory given by) the essential image of $r$ restricted to $\Inj\C$.
It is well-known that $pr=0$ (see e.g.\cite[Proposition 2.7(ii)]{PV}), so it immediately follows that, considering the restricted sequence \eqref{injrecoll}, $\I$ is contained in the kernel of $p$.  For simplicity, we say that an object is in $\I$ if its identity morphism is in $\I$.

Assume that $Q_1,Q_2\in \Inj\B$ are both not annihilated by $p$, and hence are not objects in $\I$. We claim that if $f\colon Q_1 \to Q_2$ is annihilated by $p$, it factors over some $I\in \I$.
Indeed, as $ip(M)$ is the maximal subobject of $M$ with composition factors belonging to $i(\A)$ for any $M\in\B$, $ip$ is a subfunctor of $\Id_{\B}$.  Thus, we have a commutative diagram of solid arrows
$$\xymatrix{
ip(Q_1)\ar@{^{(}->}[d]\ar^{ip(f)=0}[rr]&&ip(Q_2)\ar@{^{(}->}[d]\\
Q_1\ar^{f}[rr]\ar@{->>}[d]&&Q_2\\
Q_1/ip(Q_1)\ar@{-->}^{\bar f}[urr]&&
}$$
meaning that $f$ factors over $Q_1/ip(Q_1)$ as indicated by the dashed arrow $\bar f$.

Considering the exact sequence
$$0\to ip(Q_1)\to Q_1\to re(Q_1)$$
(cf. \cite[Proposition 4.2]{FP}, \cite[Proposition 2.6(ii)]{Ps})
and letting $I'$ be the injective hull of $e(Q_1)\in \C$, we obtain a monomorphism $Q_1/ip(Q_1)\hookrightarrow r(I')$, and hence
the injective hull $I$ of $Q_1/ip(Q_1)$, which is a direct summand of $r(I')$, is in $\I$.
By injectivity of $Q_2$, $\bar f$ now factors over $I$, so $f$ factors over $I\in\I$, as claimed.
\endproof

\subsection{Functors between comodule categories.}

From now on, $\cC$ will denote a weakly fiat $2$-category. 

\begin{lemma}\label{adjlemma}
Let $\rC, \rC'$ be coalgebra $1$-morphisms in $\underline{\cC}$  and  $Y$ a $\rC, \rC'$-bicomodule.
\begin{enumerate}[$($i$)$]
\item\label{adjlemma1} For any $M\in \comod_{\uccC}(\rC')$, the internal hom $[Y,M]$ is a right $\rC$-comodule in $\ucC$.
\item\label{adjlemma2} $[Y,-]\colon \comod_{\uccC}(\rC') \to \comod_{\uccC}(\rC)$ is left adjoint to $-\square_\rC Y$.
\end{enumerate}
\end{lemma}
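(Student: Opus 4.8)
\textbf{Plan for the proof of Lemma \ref{adjlemma}.}

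The plan is to construct the right $\rC$-comodule structure on $[Y,M]$ in part \eqref{adjlemma1} using the functoriality of the internal hom together with the $\rC$-bicomodule structure on $Y$, and then to verify the adjunction in \eqref{adjlemma2} by an explicit chain of natural isomorphisms. For \eqref{adjlemma1}, recall from Subsection \ref{coalgrecall} that $[Y,-]\colon\underline{\M}\to\ucC$ comes with the defining adjunction $\Hom_{\underline{\M}}(-,\rF\,Y)\cong\Hom_{\uccC}([Y,-],\rF)$, and that $\rF[X,Z]\cong[X,\rF\,Z]$ for all $\rF\in\cC$. The left $\rC$-coaction $\lambda_Y\colon Y\to\rC\,Y$ induces, by applying $[-,M]$ contravariantly (more precisely, by precomposition of coactions as in the construction of $[N,M]$ as a $[N,N]$-comodule in \cite[Section 4]{MMMT}), a map $[Y,M]\to[\rC\,Y,M]$. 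Using the natural isomorphism $[\rC\,Y,M]\cong[Y,M]\,\rC$ coming from the adjunction defining internal hom and the exactness of composition with $\rC$ in $\ucC$ (recorded in Subsection \ref{s2.1}), one obtains the desired coaction $\rho_{[Y,M]}\colon[Y,M]\to[Y,M]\,\rC$. The coassociativity and counitality of this coaction follow from the corresponding identities for $\lambda_Y$ as a left $\rC$-coaction, using naturality of all the isomorphisms involved; this is the routine diagram-chase part.

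For \eqref{adjlemma2}, the strategy is to exhibit, for $M\in\comod_{\uccC}(\rC')$ and $K\in\comod_{\uccC}(\rC)$, a natural isomorphism
\begin{equation*}
\Hom_{\comod_{\uccC}(\rC)}([Y,M],K)\;\cong\;\Hom_{\comod_{\uccC}(\rC')}(M,K\square_\rC Y).
\end{equation*}
Starting from the left-hand side, a morphism of right $\rC$-comodules $[Y,M]\to K$ is, by definition of the comodule structure just constructed, an element of $\Hom_{\uccC}([Y,M],K)$ equalising the two maps into $[Y,M]\,\rC\cong K\,\rC$ built from $\rho_{[Y,M]}$ and $\rho_K$. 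Applying the internal-hom adjunction $\Hom_{\uccC}([Y,-],-)\cong\Hom_{\underline{\M}}(-,(-)\,Y)$ turns $\Hom_{\uccC}([Y,M],K)$ into $\Hom_{\underline{\M}}(M,K\,Y)$, and one checks that the equaliser condition on the left translates under this adjunction precisely into the condition that the image morphism $M\to K\,Y$ lands in the kernel of $K\,Y\xrightarrow{\rho_K\circ_0\id_Y-\id_K\circ_0\lambda_Y}K\,\rC\,Y$, i.e. into $K\square_\rC Y$; here one uses left-exactness of composition in $\ucC$ to identify this kernel, and that $M$ carries a $\rC'$-coaction compatible with the right $\rC'$-comodule structure on $Y$ so that the factored map is a $\rC'$-comodule morphism. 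Naturality in both variables is then inherited from naturality of the internal-hom adjunction.

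The main obstacle I expect is the bookkeeping in \eqref{adjlemma2}: one must show carefully that the equaliser/kernel conditions on the two sides correspond under the internal-hom adjunction, which requires knowing that the isomorphism $[\rC\,Y,M]\cong[Y,M]\,\rC$ used to define $\rho_{[Y,M]}$ is compatible with the coaction $\rho_K$ on the target side, and that everything is natural with respect to the modifications. A clean way to organise this is to factor the argument through the two adjunctions $[Y,-]\dashv\,(\text{evaluation on }Y)$ on the level of $\ucC$ and $\underline{\M}$, reducing the comodule statement to the already-established bimodule/internal-hom compatibilities of \cite[Section 4]{MMMT}; the coalgebra and bicomodule axioms for $\rC,\rC',Y$ then guarantee that the restricted maps are comodule morphisms over the correct coalgebras. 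No genuinely new input beyond left-exactness of horizontal composition in $\ucC$ and the properties of internal homs recalled in Subsection \ref{coalgrecall} is needed.
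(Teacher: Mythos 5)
Your plan for part (ii) is essentially the classical argument of \cite[12.7]{BW}: transport a $\rC$-comodule morphism $[Y,M]\to K$ across the defining adjunction $\Hom_{\uccC}([Y,M],\rF)\cong\Hom_{\comod_{\uccC}(\rC')}(M,\rF Y)$ and check that the comodule condition corresponds exactly to the factored map $M\to KY$ landing in the kernel defining $K\square_\rC Y$, using left exactness of composition in $\ucC$. This is indeed all that is needed, and it is what the paper's proof amounts to (the paper simply states that the classical proofs of \cite[12.6, 12.7]{BW} carry over verbatim).

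There is, however, a genuine gap in your construction of the coaction in part (i). First, the internal hom is contravariant in its first argument: since $\Hom_{\uccC}([N,M],\rF)\cong\Hom(M,\rF N)$, a morphism $N\to N'$ induces $[N',M]\to[N,M]$, so $\lambda_Y\colon Y\to\rC Y$ yields a map $[\rC Y,M]\to[Y,M]$, not $[Y,M]\to[\rC Y,M]$ as you assert. Second, the isomorphism $[\rC Y,M]\cong[Y,M]\,\rC$ on which your construction rests is not available: the compatibility recorded in the paper is $\rF[X,Z]\cong[X,\rF Z]$ for $\rF$ a $1$-morphism of $\cC$ (i.e.\ in the second variable), and pulling a left-hand factor out of the first variable would require that factor to admit an adjoint, which a general object $\rC$ of $\ucC$ does not have (classically this is the statement $h_{\rC'}(V\otimes Y,M)\cong h_{\rC'}(Y,M)\otimes V^{*}$, valid only for $V$ finite dimensional); left exactness of composition does not give it. The correct construction, which is the one behind \cite[12.6]{BW} and behind the $[N,N]$-comodule structure in \cite[Section 4]{MMMT} that you allude to in passing, avoids both problems: define $\rho_{[Y,M]}\colon[Y,M]\to[Y,M]\rC$ as the morphism corresponding, under $\Hom_{\uccC}\bigl([Y,M],[Y,M]\rC\bigr)\cong\Hom_{\comod_{\uccC}(\rC')}\bigl(M,[Y,M]\rC\,Y\bigr)$, to $(\id_{[Y,M]}\circ_0\lambda_Y)\circ_1\mathrm{coev}_{Y,M}$, where $\mathrm{coev}_{Y,M}\colon M\to[Y,M]Y$ is the unit of the adjunction; coassociativity and counitality then follow from the bicomodule axioms for $Y$ and naturality. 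With the coaction defined this way, your argument for part (ii) goes through as outlined.
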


\proof
Both statements are proved in exactly the same way as in the classical case of coalgebras over a field, see \cite[12.6, 12.7]{BW}. 
\endproof

\begin{lemma}\label{12.8} Let $\rC, \rC'$ be coalgebra $1$-morphisms in $\underline{\cC}$  and  $\rY$ a $\rC, \rC'$-bicomodule. The following statements are equivalent:
\begin{enumerate}[$($a$)$]
\item $\rY\in \inj_{\uccC}(\rC')$.
\item $[\rY,-]$ is exact.
\end{enumerate}
If either condition is satisfied, we have $[\rY,-]\cong -\square_{\rC'} [\rY,\rC']$ as functors from $\comod_{\uccC}(\rC')$ to $\comod_{\uccC}(\rC)$
\end{lemma}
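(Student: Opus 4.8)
The plan is to establish the equivalence (a)$\Leftrightarrow$(b) first and then derive the explicit formula for $[\rY,-]$ under either hypothesis. For the direction (a)$\Rightarrow$(b), I would use the fact that for $\rY$ an injective $\rC'$-comodule, the coaction $\rho_\rY\colon\rY\to\rY\rC'$ splits, so $\rY$ is a direct summand of $\rY\rC'$, and more generally any injective $\rC'$-comodule is a summand of $\rF\rC'$ for some $\rF\in\cC$. Since $[\rY,-]$ is always left adjoint to $-\square_\rC\rY$ (Lemma \ref{adjlemma}(\ref{adjlemma2})) it is automatically right exact; to get left exactness it suffices to check it preserves monomorphisms. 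Using $\rF[X,M]\cong[X,\rF M]$ together with additivity, one reduces to showing $[\rC',-]$ is exact, and $[\rC',M]\cong M$ canonically (by counitality and the adjunction defining the internal hom), so exactness is immediate. For the converse (b)$\Rightarrow$(a), if $[\rY,-]$ is exact then, being a left adjoint, it preserves projectives-relative structure; more usefully, exactness of the left adjoint $[\rY,-]$ forces the right adjoint $-\square_\rC\rY$ to preserve injectives, and applying this to $\rC$ itself gives that $\rC\square_\rC\rY\cong\rY$ (by counitality of $\rC$, cf. the standard identity $\rC\square_\rC\rY\cong\rY$) is an injective $\rC'$-comodule, i.e. $\rY\in\inj_{\uccC}(\rC')$.

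For the final formula, I would argue as follows. Both $[\rY,-]$ and $-\square_{\rC'}[\rY,\rC']$ are functors $\comod_{\uccC}(\rC')\to\comod_{\uccC}(\rC)$, and both are right exact: the first because it is a left adjoint, the second because cotensoring with a comodule that is injective — note $[\rY,\rC']$ is an injective $\rC'$-comodule whenever $\rY$ is an injective $\rC$-comodule, which is the relevant case here, or more simply because under hypothesis (a) the functor $-\square_{\rC'}[\rY,\rC']$ is exact by Lemma \ref{adjlemma} applied with roles reversed. There is a natural transformation between them: the coaction $\rho_M\colon M\to M\rC'$ induces, via functoriality of $[\rY,-]$ and the identification $[\rY,M\rC']\cong[\rY,M]\rC'$ coming from $\rF[X,Y]\cong[X,\rF Y]$, a map landing in the kernel defining the cotensor product, giving $\eta_M\colon[\rY,M]\to[\rY,M]\square_{\rC'}\ldots$; but the cleaner route is to use the standard comodule-theoretic natural map $[\rY,M]\square_{\rC'}\rC'\to[\rY,M]$, precompose appropriately, and exhibit a natural transformation $\alpha\colon [\rY,-]\Rightarrow -\square_{\rC'}[\rY,\rC']$. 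One then checks $\alpha$ is an isomorphism on $M=\rC'$ (both sides give $[\rY,\rC']$, using $\rC'\square_{\rC'}[\rY,\rC']\cong[\rY,\rC']$ by counitality), hence on all of $\inj_{\uccC}(\rC')$ by additivity and the fact that every injective $\rC'$-comodule is a summand of $\rF\rC'$, and finally on all of $\comod_{\uccC}(\rC')$ by a five-lemma argument: take a presentation $Q_1\to Q_0\to M\to 0$ with $Q_i$ injective $\rC'$-comodules (which exist since $\comod_{\uccC}(\rC')$ has enough injectives), apply both right-exact functors, and conclude.

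The main obstacle I anticipate is the bookkeeping in constructing the natural transformation $\alpha$ and verifying its naturality — in the classical coalgebra-over-a-field setting this is \cite[12.8]{BW}, but here one must be careful that all the manipulations take place in the left-exact (not exact) $2$-category $\ucC$, so kernels are available but cokernels and exactness arguments must be routed through the finitely many injective generators $\rF\rC'$ and the identities $\rF[X,Y]\cong[X,\rF Y]$ from \cite[Proof of Lemma 6]{MMMT}. Concretely: one cannot freely dualise or use that comodules embed in cofree ones in the naive way, so the reduction "check on $\rC'$, extend by additivity to injectives, extend by presentations to all comodules" has to be organised so that each step only uses constructions legitimate in $\ucC$. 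Once that scaffolding is in place, the verifications are the same formal diagram chases as in \cite{BW}, and I would simply cite that reference for the computational core, as is already done in the proof of Lemma \ref{adjlemma}.
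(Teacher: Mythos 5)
Your direction (b)$\Rightarrow$(a) is exactly the paper's argument (exactness of the left adjoint $[\rY,-]$ forces the right adjoint $-\square_\rC\rY$ to preserve injectives; apply this to the injective $\rC$-comodule $\rC$ and use $\rC\square_\rC\rY\cong\rY$), and deferring the computational core to \cite[12.8, 23.7]{BW} is also what the paper does. However, two steps of your outline do not work as written. In (a)$\Rightarrow$(b) you reduce $[\rY,-]$ to $[\rC',-]$ ``using $\rF[X,M]\cong[X,\rF M]$ and additivity''. That identity moves $\rF$ in and out of the \emph{second} argument of the internal hom, whereas $\rY$ sits in the \emph{first} argument: knowing $\rY$ is a summand of $\rF\rC'$ only helps if you have a first-argument compatibility, e.g. $[\rF\rC',M]\cong[\rC',M]\rF^{*}\cong M\rF^{*}$ coming from the adjunction $(\rF,\rF^{*})$ (or you can avoid this entirely, as the paper does, by instead showing the right adjoint $-\square_\rC\rY$ preserves injectives, using that injective $\rC$-comodules are summands of $\rG\rC$ and $\rG\rC\square_\rC\rY\cong\rG\rY$, which is injective when $\rY$ is).

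The more serious gap is in your extension step for $[\rY,-]\cong-\square_{\rC'}[\rY,\rC']$. First, $-\square_{\rC'}[\rY,\rC']$ is not known to be right exact at that point: the cotensor product is defined as a kernel, hence is only left exact a priori (the paper itself repeatedly uses this left exactness), and neither of your justifications works --- $[\rY,\rC']$ being injective over the wrong coalgebra is irrelevant, and Lemma \ref{adjlemma} ``with roles reversed'' only provides an adjunction, not exactness; in fact exactness of this cotensor functor is essentially part of what the isomorphism proves. Second, presentations $Q_1\to Q_0\to M\to 0$ with $Q_i$ injective need not exist: enough injectives yields \emph{co}presentations $0\to M\to Q^0\to Q^1$, and in these comodule categories injective objects do not in general surject onto every object. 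The argument that does work (and is the one in \cite[12.8]{BW} that the paper cites) runs the other way: both $[\rY,-]$ (exact, by (b)) and $-\square_{\rC'}[\rY,\rC']$ (left exact) preserve left exact sequences; the natural transformation is an isomorphism on the objects $\rF\rC'$, hence on all injectives by additivity; and one concludes for arbitrary $M$ by applying both functors to an injective copresentation of $M$ and invoking the five lemma.
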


\proof
The same proof as in \cite[12.8, 23.7]{BW} shows that $[\rY,-]:\comod_{\uccC}(\rC')\to \comod_{\uccC}(\rC)$ is exact if and only if $\rI\square_{\rC}\rY$ is injective for all injective $\rC$-comodules $\rI$.
In our setting, since every injective $\rI$ is direct summand in $\comod_{\uccC}(\rC)$ of $\rF\rC$ for some $1$-morphism $\rF$, this is equivalent to $\rC\square_{\rC}\rY\in \inj_{\uccC}(\rC')$, but $\rC\square_\rC \rY\cong \rY$.
The last statement is proved in the same way as in loc. cit.
\endproof

Since $[\rC,\rC]\cong \rC$ by the definition of realisation morphism, an immediate consequence of Lemma \ref{12.8} is the following result.

\begin{corollary}\label{idcohom}
For a coalgebra $1$-morphism $\rC$ in $\underline{\cC}$, we have an isomorphism between $[\rC,-]$ and the identity functor on $\comod_{\uccC}(\rC)$.
\end{corollary}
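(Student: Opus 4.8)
The statement to prove is Corollary \ref{idcohom}: for a coalgebra $1$-morphism $\rC$ in $\underline{\cC}$, there is an isomorphism between $[\rC,-]$ and the identity functor on $\comod_{\uccC}(\rC)$.

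The plan is to apply Lemma \ref{12.8} with $\rC' = \rC$ and $\rY = \rC$ regarded as a $\rC,\rC$-bicomodule via the comultiplication $\mu_\rC$ on both sides. First I would verify that $\rC$, as a right $\rC$-comodule, lies in $\inj_{\uccC}(\rC)$: this is immediate since $\rC = \mathbbm{1}_{\!}\rC$ (indeed $\rC$ is a direct summand of $\rF\rC$ for $\rF = \mathbbm{1}$), so condition (a) of Lemma \ref{12.8} is satisfied. Hence $[\rC,-]$ is exact and, by the last assertion of that lemma, $[\rC,-] \cong -\square_\rC [\rC,\rC]$ as functors $\comod_{\uccC}(\rC) \to \comod_{\uccC}(\rC)$.

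Next I would invoke the fact, recalled in Subsection \ref{coalgrecall}, that the realisation morphism satisfies $[\rC,\rC] \cong \rC$ as coalgebra $1$-morphisms (and in particular as $\rC,\rC$-bicomodules). Substituting this into the isomorphism from the previous step gives $[\rC,-] \cong -\square_\rC \rC$. It then remains to observe that cotensoring a right $\rC$-comodule $M$ with $\rC$ itself recovers $M$: by definition $M \square_\rC \rC$ is the kernel of $\rho_M \circ_0 \id_\rC - \id_M \circ_0 \mu_\rC \colon M\rC \to M\rC\rC$, and the coaction $\rho_M\colon M \to M\rC$ equalises this pair by the coassociativity axiom, while counitality shows $\rho_M$ is the universal such map, so $M \square_\rC \rC \cong M$ naturally in $M$. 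This natural isomorphism $-\square_\rC \rC \cong \Id_{\comod_{\uccC}(\rC)}$ is exactly the statement that $\rC$ is the counit object for the cotensor product, the coalgebra analogue of $M \otimes_A A \cong M$; it holds verbatim as in \cite[Section 21]{BW}.

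I do not anticipate a serious obstacle here — the corollary is essentially bookkeeping built on Lemma \ref{12.8} and the identity $[\rC,\rC]\cong\rC$. The only point requiring a little care is to ensure the chain of isomorphisms $[\rC,-] \cong -\square_\rC[\rC,\rC] \cong -\square_\rC \rC \cong \Id$ is natural and compatible with the comodule structures throughout, rather than merely objectwise; this follows because each isomorphism in the chain is furnished as an isomorphism of functors in the cited results. One could alternatively give a direct argument: $[\rC,-]$ is by construction left adjoint to $-\square_\rC \rC$ (Lemma \ref{adjlemma}\eqref{adjlemma2} with $\rY = \rC$), and since $-\square_\rC \rC \cong \Id$ is its own inverse equivalence, its left adjoint $[\rC,-]$ must also be isomorphic to $\Id$; I would mention this as the cleanest route and use it if the naturality bookkeeping in the first approach proves tedious.
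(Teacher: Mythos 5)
Your argument is correct and follows the same route as the paper, whose proof is just the remark preceding the corollary: apply Lemma \ref{12.8} to the injective right $\rC$-comodule $\rC$ itself to get $[\rC,-]\cong -\square_\rC[\rC,\rC]$, then use $[\rC,\rC]\cong\rC$ and $-\square_\rC\rC\cong\Id$. The extra details you supply (injectivity of $\rC$, the counitality argument for $-\square_\rC\rC\cong\Id$, and the alternative via uniqueness of left adjoints) are all sound.
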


\begin{lemma}\label{Iinfromleft}
Let $\rC$ be  a coalgebra $1$-morphism in $\underline{\cC}$, $\rI\in \inj_{\uccC}(\rC)$. Then 
$$\rI\square_\rC[\rM,\rC] \cong [\rM,\rI]$$
for all $\rM\in \comod_{\uccC}(\rC)$. 
\end{lemma}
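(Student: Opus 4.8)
The plan is to deduce this from Lemma~\ref{12.8} together with the associativity of cotensor products and the identities recalled in Subsection~\ref{coalgrecall}. First I would observe that since $\rI\in\inj_{\uccC}(\rC)$, Lemma~\ref{12.8} (applied with $\rC'=\rC$ and $\rY=\rI$, viewed as a $\rC,\rC$-bicomodule via the regular coaction on both sides) gives an isomorphism of functors $[\rI,-]\cong -\square_\rC[\rI,\rC]$ from $\comod_{\uccC}(\rC)$ to $\comod_{\uccC}(\rC)$. The point of the lemma we are proving is, roughly, to rewrite the \emph{second} slot $[\rM,\rC]$ of the cotensor product rather than to evaluate $[\rI,-]$.

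The key step is the following chain of natural isomorphisms. By Lemma~\ref{adjlemma}\eqref{adjlemma2}, $[\rI,-]$ is left adjoint to $-\square_\rC\rI$, and I would compare this with the adjunction defining the internal hom. Concretely, for any $\rF\in\ucC$ one has
\begin{align*}
\Hom_{\uccC}\big([\rM,\rI],\rF\big)
&\cong \Hom_{\underline{\M}}\big(\rI,\rF\rM\big)\\
&\cong \Hom_{\underline{\M}}\big(\rF^*\rI,\rM\big),
\end{align*}
using the defining adjunction of $[\rM,-]$ and then the adjunction $(\rF^*,\rF)$ coming from weak fiatness (here $\underline{\M}$ is the abelian $2$-representation realising $\comod_{\uccC}(\rC)$, and $\rM,\rI$ are regarded as objects of it). On the other hand, using Corollary~\ref{idcohom} (or directly the counit/unit identities) together with Lemma~\ref{adjlemma}\eqref{adjlemma2} applied to the bicomodule $\rC$, one computes $\Hom_{\uccC}\big(\rI\square_\rC[\rM,\rC],\rF\big)$ via the same two adjunctions and the identity $\rF[\rM,\rC]\cong[\rM,\rF\rC]$ (the last isomorphism recalled in Subsection~\ref{coalgrecall}, valid for $\rM\in\underline{\M}$). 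I would then check that the two resulting functors $\ucC\to\mathbf{Set}$ (or $\Bbbk$-mod) agree naturally in $\rF$, which by Yoneda yields the claimed isomorphism $\rI\square_\rC[\rM,\rC]\cong[\rM,\rI]$, and then verify that this isomorphism respects the right $\rC$-comodule structures (equivalently, is an isomorphism in $\comod_{\uccC}(\rC)$), which follows because every map in sight is a map of $2$-representations.

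An alternative, possibly cleaner, route is purely formal: write $[\rM,\rI]\cong[\rI,-]$ evaluated contravariantly is not available, so instead note $[\rM,\rI]\cong[\rM,\rC]\square_\rC\rI$ would be the ``wrong-handed'' version of Lemma~\ref{12.8}; the correct statement is that $\rI\square_\rC(-)$, being the composite of the equivalence $[\rC,-]\cong\Id$ with the exact functor attached to the injective bicomodule $\rI$, commutes with $[\rM,-]$ up to the isomorphism $\rF[\rM,\rC]\cong[\rM,\rF\rC]$ extended from $1$-morphisms $\rF$ to all injective comodules (hence to $\rI$) by additivity and taking summands, since $\rI$ is a summand of some $\rF\rC$. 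I would spell this out: for $\rI=\rF\rC$ one has $\rF\rC\square_\rC[\rM,\rC]\cong\rF([\rM,\rC]\text{ with its left }\rC\text{-structure absorbed})\cong\rF[\rM,\rC]\cong[\rM,\rF\rC]$, using $\rC\square_\rC(-)\cong\Id$ on left $\rC$-comodules; then pass to direct summands.

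The main obstacle I anticipate is bookkeeping the various co/module structures: $\rI$ must be treated simultaneously as a right $\rC$-comodule (for the cotensor product $-\square_\rC\rI$) and, when invoking $\rF\rC\square_\rC[\rM,\rC]\cong\rF[\rM,\rC]$, one is implicitly using the left $\rC$-comodule structure on $[\rM,\rC]$ and the regular left structure on $\rC$; making sure the isomorphism $\rF[\rM,\rC]\cong[\rM,\rF\rC]$ from \cite[Proof of Lemma 6]{MMMT} is compatible with passing to summands of $\rF\rC$ inside $\comod_{\uccC}(\rC)$ — rather than just inside $\add\{\rF\rC\}$ — is the point that needs care. Everything else is a routine transport of the classical arguments of \cite[12.7, 12.8]{BW} to the $2$-categorical setting, exactly as in the proofs of Lemmas~\ref{adjlemma} and~\ref{12.8}.
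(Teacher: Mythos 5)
Your ``alternative route'' is exactly the paper's proof: since $\rI$ is a direct summand in $\comod_{\uccC}(\rC)$ of $\rF\rC$ for some $1$-morphism $\rF$, the claim follows by additivity from $\rF\rC\square_\rC[\rM,\rC]\cong\rF[\rM,\rC]\cong[\rM,\rF\rC]$, so in substance your proposal is correct and matches the paper. One caution about your primary route: as sketched, the Yoneda argument does not go through, because $\rI\square_\rC[\rM,\rC]$ is a kernel and $\Hom_{\uccC}(-,\rF)$ out of it is not computed by the adjunctions you invoke --- Lemma~\ref{adjlemma}\eqref{adjlemma2} is an adjunction between comodule categories whose Hom-spaces have the cotensor product in the \emph{second} slot, so the identification of $\Hom_{\uccC}(\rI\square_\rC[\rM,\rC],\rF)$ naturally in $\rF$ is essentially the statement you are trying to prove, not an input. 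The compatibility issue you flag at the end (that the isomorphisms $\rF\rC\square_\rC[\rM,\rC]\cong\rF[\rM,\rC]\cong[\rM,\rF\rC]$ must be natural enough to commute with the idempotent in $\End_{\comod_{\uccC}(\rC)}(\rF\rC)$ cutting out $\rI$) is a fair point; the paper dispatches it with the phrase ``since all functors are additive,'' and spelling it out as you propose would only strengthen the argument.
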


\proof
Any $\rI\in \inj_{\uccC}(\rC)$ is a direct summand in $\comod_{\uccC}(\rC)$ of $\rF\rC$ for some $1$-morphism $\rF\in\cC$. Since all functors are additive, the claim follows from
$$\rF\rC\square_\rC[\rM,\rC] \cong \rF[\rM,\rC]\cong [\rM,\rF\rC].$$
\endproof

\subsection{The comodule category of a subcoalgebra and related functors}
Let $\rC=(\rC,\mu_\rC,\epsilon_\rC)$ be a coalgebra $1$-morphism.
By a {\bf subcoalgebra} $\rD$ of $\rC$, we mean a coalgebra $1$-morphism $\rD=(\rD,\mu_\rD,\epsilon_\rD)$ together with a monomorphism $\iota:\rD\hookrightarrow\rC$ in $\underline{\cC}$ satisfying $\mu_\rD\circ_1\iota=(\iota\circ_0\iota)\circ_1\mu_\rD$ and $\epsilon_\rD=\epsilon_\rC\circ_1\iota$.

Note that for any right $\rD$-comodule $\rN$ with coaction map $\rho_\rN^\rD:\rN\to\rN\rD$, one naturally obtains a right $\rC$-comodule by post-composing $\rho_\rN^\rD$ with $\id_\rN\circ_0\iota$.
This construction give rise to a functor $-\square_\rD\rD_\rC$ (see the lemma below and \cite[Section 3.4]{MMMZ}).
In particular, a right $\rC$-comodule is in the essential image of $-\square_\rD\rD_\rC$ if its coaction map $\rho_\rM^\rC:\rM\to \rM\rC$ factors through $\id_\rM\circ_0\iota$.
This fact will be used throughout the rest of the article. 
 
\begin{lemma}\label{lemclosed}
Let $\rC$ be a coalgebra $1$-morphism in $\underline{\cC}$ and $\rD \overset{\iota}{\hookrightarrow} \rC$ be a subcoalgebra with cokernel $\rC \overset{\pi}{\twoheadrightarrow} \rJ$. The natural morphism of $2$-representations $\comod_{\uccC}(\rD) \to\comod_{\uccC}(\rC)$ given by $-\square_\rD \rD_\rC$ is fully faithful, exact, and the subcategory it defines is closed under quotients and subobjects.
\end{lemma}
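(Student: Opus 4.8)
\textbf{Proof proposal for Lemma \ref{lemclosed}.}

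The plan is to verify the four claimed properties---fully faithful, exact, closed under quotients, closed under subobjects---more or less in that order, leveraging the fact that a right $\rC$-comodule $\rM$ lies in the essential image of $-\square_\rD\rD_\rC$ precisely when its coaction $\rho_\rM^\rC$ factors through $\id_\rM\circ_0\iota$, as noted just before the statement. First I would set up the functor carefully: for a right $\rD$-comodule $\rN$, the composite $(\id_\rN\circ_0\iota)\circ_1\rho_\rN^\rD$ is a $\rC$-coaction by the subcoalgebra axioms $\mu_\rD\circ_1\iota=(\iota\circ_0\iota)\circ_1\mu_\rD$ and $\epsilon_\rD=\epsilon_\rC\circ_1\iota$; since $\iota$ is a monomorphism and coaction maps are always monomorphisms in $\ucC$, the underlying $1$-morphism of $\rN\square_\rD\rD_\rC$ is again (isomorphic to) $\rN$, and on morphisms the functor is the identity, which gives full faithfulness immediately. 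Compatibility with the $2$-action of $\cC$ is inherited from the $1$-morphism side since horizontal composition with $\rF\in\cC$ is exact and commutes with all the structure maps, so this is genuinely a morphism of $2$-representations.

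Next, exactness: a short exact sequence in $\comod_{\uccC}(\rD)$ is one that is exact on underlying $1$-morphisms in $\ucC$ and compatible with coactions; since $-\square_\rD\rD_\rC$ is the identity on underlying objects and morphisms (only reinterpreting the coaction via $\iota$), exactness is preserved. For closure under subobjects, suppose $\rM'\hookrightarrow\rM$ in $\comod_{\uccC}(\rC)$ with $\rM$ in the essential image, i.e. $\rho_\rM^\rC$ factors as $(\id_\rM\circ_0\iota)\circ_1\sigma$ for some $\sigma\colon\rM\to\rM\rD$. Restricting the coaction of $\rM$ along the inclusion $\rM'\hookrightarrow\rM$ and using that $-\rD$ and $-\rC$ are left exact in $\ucC$ (composition in $\ucC$ is left exact in both variables, as recalled in Subsection \ref{s2.1}), one gets a commutative diagram showing $\rho_{\rM'}^\rC$ factors through $\id_{\rM'}\circ_0\iota$; the factoring map is obtained because $\rM'\rD$ is the pullback of $\rM'\rC\to\rM\rC\leftarrow\rM\rD$ along the left-exact functor $-\rD$ applied to $\rM'\hookrightarrow\rM$, or more directly since $\id_{\rM'}\circ_0\iota$ is a monomorphism and $\rho_{\rM'}^\rC$ lands in its image by naturality. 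Hence $\rM'$ lies in the essential image.

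For closure under quotients, let $\rM\twoheadrightarrow\rM''$ in $\comod_{\uccC}(\rC)$ with $\rM$ in the essential image via $\sigma\colon\rM\to\rM\rD$. Here I expect the main obstacle: unlike the subobject case, right exactness of composition is not available, so one cannot simply push $\sigma$ forward along $\rM\twoheadrightarrow\rM''\otimes\rD$. The natural route is to use the cokernel $\rJ$: the coaction $\rho_\rM^\rC$ factoring through $\id_\rM\circ_0\iota$ is equivalent to the composite $\rM\xrightarrow{\rho_\rM^\rC}\rM\rC\xrightarrow{\id_\rM\circ_0\pi}\rM\rJ$ being zero. One then checks this vanishing descends to the quotient: apply the snake lemma (as in the abelianisation discussion of Subsection \ref{s2.1}) to the map of short exact sequences induced by $\rM\twoheadrightarrow\rM''$ horizontally composed with $\id\to\rJ$, using that the kernel of $\rM\rC\to\rM\rJ$ is $\rM\rD$ by left exactness of $-\rD$, so that $\rho_\rM^\rC$ lands in $\rM\rD$, and that the corresponding statement for $\rM''$ follows from a diagram chase comparing $\rM\rD\to\rM''\rD$ and $\rM\rC\to\rM''\rC$. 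Since $\iota$ (hence $\id_{\rM''}\circ_0\iota$) is a monomorphism and $\rho_{\rM''}^\rC$ is forced to factor through its image, this yields the required factorisation and completes the proof. The one genuinely delicate point to write out carefully is ensuring the $\rD$-coaction one produces on the sub- or quotient object actually satisfies coassociativity and counitality, rather than merely existing as a map---but this follows formally from the monomorphicity of $\iota$ together with the corresponding identities upstairs.
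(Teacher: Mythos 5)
Your treatment of closure under subobjects and quotients, and of the verification that the induced map $\sigma$ is a genuine $\rD$-coaction, is essentially the paper's argument: show the composite $\rM\to\rM\rC\xrightarrow{\id_\rM\circ_0\pi}\rM\rJ$ vanishes (cancelling an epimorphism on the right in the quotient case, using monicity of $f\circ_0\id_\rJ$ in the subobject case), factor through $\rM\rD=\ker(\id_\rM\circ_0\pi)$ via left exactness of horizontal composition, and deduce coassociativity/counitality by cancelling the monomorphism $\id\circ_0\iota\circ_0\iota$. (The snake lemma you invoke for quotients is not needed: once everything is phrased through $\rJ$, the only point is that precomposition with the epimorphism $\rM\twoheadrightarrow\rM''$ kills $(\id_{\rM''}\circ_0\pi)\circ_1\rho_{\rM''}$, so this map is zero.) However, there is one genuine gap: fullness is not ``immediate from the functor being the identity on morphisms''. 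That observation only gives faithfulness. A morphism $f\colon\rM\to\rN$ in $\comod_{\uccC}(\rC)$ between objects in the image is, a priori, only compatible with the \emph{extended} $\rC$-coactions, and one must show it is compatible with the original $\rD$-coactions, i.e.\ that $\rho^{\rD}_\rN\circ_1 f=(f\circ_0\id_\rD)\circ_1\rho^{\rD}_\rM$. This is exactly the paper's fullness computation: compare both sides after postcomposing with the monomorphism $\id_\rN\circ_0\iota$ and cancel it. The argument is short and of the same flavour as your other steps, but as written your proposal does not contain it, and the two hom-sets it conflates are genuinely different subsets of $\Hom_{\ucC}(\rM,\rN)$ until this is proved.

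Your route to exactness also differs from the paper's and rests on an assertion you do not justify, namely that exactness in $\comod_{\uccC}(\rD)$ and $\comod_{\uccC}(\rC)$ is detected on underlying $1$-morphisms in $\ucC$. This is true in the weakly fiat setting (kernels of comodule maps are computed in $\ucC$ thanks to left exactness of composition, and cokernels inherit coactions by bifunctoriality), and it is consistent with how the paper manipulates exact sequences of comodules elsewhere, but it deserves an explicit argument. The paper instead deduces exactness from Lemma \ref{12.8}: since $\rD$ is injective as a right $\rD$-comodule, $[\rD,-]$ is exact and $[\rD,-]\cong-\square_\rD[\rD,\rD]_\rC\cong-\square_\rD\rD_\rC$. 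Besides sidestepping the forgetful-functor issue, this identification $-\square_\rD\rD_\rC\cong[\rD,-]$ is reused later (e.g.\ in Lemma \ref{MMMZCor7Lem8}), so the paper's route buys more than exactness alone; your route is more elementary but would need the supporting claim spelled out.
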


\proof
The fact that  $-\square_\rD \rD_\rC$ is  faithful is obvious from the definition. 
By injectivity of ${}_\rD\rD$, it follows from Lemma \ref{12.8} that $[\rD,-]:\comod_{\uccC}(\rD)\to \comod_{\uccC}(\rC)$ is exact and  $[\rD,-]\cong -\square_\rD [\rD,\rD]_\rC \cong -\square_\rD \rD_\rC$, hence $-\square_\rD \rD_\rC$ is exact.

To see that it is full consider a morphism $f\colon \rM\to \rN$ between two objects isomorphic to $\rM'\square_\rD \rD_\rC$ and  $\rN'\square_\rD \rD_\rC$ respectively, i.e. both coaction $\rho_\rM$ and $\rho_\rN$ factor over $\rho_\rM^\rD\colon \rM\to \rM\rD$ and $\rho_\rN^\rD\colon \rN\to \rN\rD$ respectively. Consider the diagram
$$\xymatrix@C=40pt{
\rM\ar^{f}[rrr]\ar^{\rho_\rM^\rD}[rd]\ar_{\rho_\rM}[dd]&&&\rN\ar[dd]^{\rho_\rN}\ar_{\rho_\rN^\rD}[ld]\\
&\rM\rD\ar^{f\circ_0\id_\rD}[r]\ar^{\id_\rM\circ_0\iota}[dl]&\rN\rD\ar^{\id_\rN\circ_0\iota}[dr]&\\
\rM\rC\ar_{f\circ_0\id_\rC}[rrr]&&&\rN\rC.
}$$
where the triangles, the outer square and the lower trapezium commute. Then 
\begin{equation*}
\begin{split}
(\id_\rN\circ_0\iota)\circ_1\rho_\rN^\rD\circ_1 f&= \rho_\rN\circ_1 f = (f\circ_0\id_\rC)\circ_1\rho_\rM\\
&=(f\circ_0\id_\rC)\circ_1(\id_\rM\circ_0\iota)\circ_1\rho_\rM^\rD\\
&= (\id_\rN\circ_0\iota)\circ_1(f\circ_0\id_\rD)\circ_1\rho_\rM^\rD.
\end{split}
\end{equation*}
Since $(\id_\rN\circ_0\iota)$ is mono, $\rho_\rN^\rD\circ_1 f=(f\circ_0\id_\rD)\circ_1\rho_\rM^\rD$, so $f$ is induced from a morphism in $\comod_{\uccC}(\rD)$ and $-\square_\rD \rD_\rC$ is full.

Let $\rM$ be isomorphic to an object of the form $\rM'\square_\rD \rD_\rC$, i.e. the coaction $\rho_\rM\colon \rM\to \rM\rC$ factors over the inclusion $\id_\rM\circ_0\iota\colon \rM\rD\hookrightarrow \rM\rC$. 

To show closure under quotients,
let $f\colon \rM\twoheadrightarrow \rN$ be an epimorphism in $\comod_{\uccC}(\rC)$. Consider the solid part of the diagram
$$\xymatrix@C=40pt{
\rM\ar@{->>}^{f}[rrr]\ar_{\rho_\rM^\rD}[rd]\ar_{\rho_\rM}[dd]&&&\rN\ar^{\rho_\rN}[dd]\ar@{-->}[dl]_{\sigma}\\
&\rM\rD\ar^{f\circ_0\id_\rD}[r]\ar^{\id_\rM\circ_0\iota}[dl]&\rN\rD\ar^{\id_\rN\circ_0\iota}[dr]&\\
\rM\rC\ar^{f\circ_0\id_\rC}[rrr]\ar_{\id_\rM\circ_0\pi}[d]&&&\rN\rC\ar_{\id_\rN\circ_0\pi}[d]\\
\rM\rJ\ar^{f\circ_0\id_\rJ}[rrr]&&&\rN\rJ.
}$$
Since $(\id_\rM\circ_0\pi)\circ_1\rho_\rM = (\id_\rM\circ_0\pi)\circ_1(\id_\rM\circ_0\iota)\circ_1\rho_\rM^\rD = (\id_\rM\circ_0(\pi\circ_1\iota))\circ_1\rho_\rM^\rD=0$, we have 
$ (\id_\rN\circ_0\pi)\circ_1\rho_\rN\circ_1 f =(f\circ_0\id_\rJ)\circ_1(\id_\rM\circ_0\pi)\circ_1\rho_\rM  = 0$ and, since $f$ is epi, $ (\id_\rN\circ_0\pi)\circ_1\rho_\rN=0$. Hence $\rho_\rN$ factors over the kernel of $\id_\rN\circ_0\pi$, which, by left exactness of horizontal composition with $\id_\rN$ is $\id_\rN\circ_0\iota$. This yields the dashed arrow $\sigma$. Now we have
\begin{equation*}
\begin{split}
(\id_\rN\circ_0\iota)\circ_1\sigma\circ_1 f &= \rho_\rN\circ_1 f =(f\circ_0\id_\rC)\circ_1\rho_\rM\\
&=(f\circ_0\id_\rC)\circ_1(\id_\rM\circ_0\iota)\circ_1\rho_\rM^\rD \\
&= (\id_\rN\circ_0\iota)\circ_1(f\circ_0\id_\rD)\circ_1\rho_\rM^\rD.
\end{split}
\end{equation*}
As $(\id_\rN\circ_0\iota)$ is mono, it follows that $\sigma\circ_1 f =(f\circ_0\id_\rD)\circ_1\rho_\rM^\rD$, so the coaction on $\rN$ indeed factors over $\rN\rD$ as claimed.

To show closure under subobjects, let $f\colon \rN\hookrightarrow \rM$ be a monomorphism.
Consider the solid part of the diagram
$$\xymatrix@C=40pt{
\rN\ar@{^{(}->}^{f}[rrr]\ar@{-->}^{\sigma}[rd]\ar_{\rho_\rN}[dd]&&&\rM\ar[dd]^{\rho_\rM}\ar_{\rho_\rM^\rD}[ld]\\
&\rN\rD\ar^{f\circ_0\id_\rD}[r]\ar^{\id_\rN\circ_0\iota}[dl]&\rM\rD\ar^{\id_\rM\circ_0\iota}[dr]&\\
\rN\rC\ar^{f\circ_0\id_\rC}[rrr]\ar_{\id_\rN\circ_0\pi}[d]&&&\rM\rC\ar_{\id_\rM\circ_0\pi}[d]\\
\rN\rJ\ar^{f\circ_0\id_\rJ}[rrr]&&&\rM\rJ.
}$$
As before, $(\id_\rM\circ_0\pi)\circ_1\rho_\rM =0$, so $(\id_\rM\circ_0\pi)\circ_1\rho_\rM \circ_1 f=(f\circ_0\id_\rJ)\circ_1(\id_\rN\circ_0\pi)\circ_1\rho_\rN  = 0$ and since $f\circ_0\id_\rJ$ is a monomorphism (using left exactness of horizontal composition with $\id_\rJ$), furthermore, $(\id_\rN\circ_0\pi)\circ_1\rho_\rN  = 0$. Hence, as above, $\rho_\rN $ factors over $\rN\rD$, giving the dashed arrow $\sigma$.
Similarly to before,
\begin{equation*}
\begin{split}
(\id_\rM\circ_0\iota)\circ_1(f\circ_0\id_\rD)\circ_1\sigma&= 
(f\circ_0\id_\rC)\circ_1(\id_\rN\circ_0\iota)\circ_1\sigma\\
&=(f\circ_0\id_\rC)\circ_1\rho_\rN=\rho_\rM\circ_1 f\\
&=(\id_\rM\circ_0\iota)\circ_1\rho_\rM^\rD\circ_1 f\\
\end{split}
\end{equation*}
and thanks to monicity of $\id_\rM\circ_0\iota$, we conclude $(f\circ_0\id_\rD)\circ_1\sigma = \rho_\rM^\rD\circ_1 f$.

It is immediate that in both cases that $\sigma$ defines a right coaction on $\rN$.
Indeed, in general, if a right $\rC$-coaction $\rho_\rN\colon \rN\to \rN\rC$ factors over the inclusion $\id_\rN\circ_0\iota\colon \rN\rD\to\rN\rC$ via a map $\sigma$, we have  
\begin{equation*}\begin{split}
(\id_\rN\circ_0\iota\circ_0\iota)\circ_1(\sigma\circ_0\id_\rD)\circ_1\sigma 
&= [((\id_\rN\circ_0\iota)\circ_1 \sigma)\circ_0 \id_\rC]\circ_1 (\id_\rN\circ_0\iota)\circ_1\sigma\\
&= (\rho_\rN\circ_0\id_\rC) \circ_1\rho_\rN\\
&= (\id_\rN\circ_0\mu_\rC) \circ_1\rho_\rN\\
&=(\id_\rN\circ_0\mu_\rC) \circ_1 (\id_\rN\circ_0\iota)\circ_1\sigma\\
&=(\id_\rN\circ_0\iota\circ_0\iota)\circ_1(\id_\rN\circ_0\mu_\rD)\circ_1\sigma
\end{split}\end{equation*}
where the first equality uses the interchange law twice, the second and fourth equalities are the definition of $\sigma$ , the third equality comes from $\rho_N$ being a coaction, and the last equality from $\iota$ being a coalgebra map. Cancelling the monomorphism $\id_\rN\circ_0\iota\circ_0\iota$ implies the first comodule axiom. For the second, we compute
$$(\id_\rN\circ_0\varepsilon_\rD)\circ_1\sigma =  (\id_\rN\circ_0\varepsilon_\rC)\circ_1(\id_\rN\circ_0\iota)\circ_1\sigma (\id_\rN\circ_0\varepsilon_\rC)\circ_1\rho_\rN=\id_\rN.$$
\endproof

\begin{lemma}\label{MMMZCor7Lem8}
Let $\rC$ be a coalgebra $1$-morphism in in $\underline{\cC}$, and $\rD \overset{\iota}{\hookrightarrow} \rC$ a subcoalgebra.
\begin{enumerate}[$($i$)$]
\item\label{MMMZ7} $-\square_\rD\rD\square_\rC\rD$ is naturally isomorphic to the identity morphism on $\comod_{\uccC}(\rD)$.
\item\label{MMMZ8} There is a monic natural transformation from $-\square_{\rC}\rD\square_{\rD}\rD_\rC$ to the identity morphism on $\comod_{\uccC}(\rC)$.
\end{enumerate}
\end{lemma}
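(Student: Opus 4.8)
The plan is to recognise the two natural transformations appearing in \ref{MMMZ7} and \ref{MMMZ8} as the unit and counit of the adjunction between $-\square_\rD\rD_\rC$ and $-\square_\rC\rD$, and then to deduce the stated properties formally, using only Lemmas~\ref{lemclosed} and~\ref{adjlemma}. Concretely, from the proof of Lemma~\ref{lemclosed} there is a natural isomorphism $L:=(-\square_\rD\rD_\rC)\cong[\rD,-]$, where $\rD$ carries the bicomodule structures induced by $\iota$; by Lemma~\ref{adjlemma}(ii), $[\rD,-]$ is left adjoint to $R:=(-\square_\rC\rD)$, so $L\dashv R$. Since horizontal composition in $\ucC$ is left exact (Subsection~\ref{s2.1}), the relevant cotensor products over $\ucC$ associate, which identifies $R\circ L$ with $-\square_\rD\rD\square_\rC\rD$ and $L\circ R$ with $-\square_\rC\rD\square_\rD\rD_\rC$; the unit $\eta\colon\Id_{\comod_{\uccC}(\rD)}\to RL$ and the counit $\theta\colon LR\to\Id_{\comod_{\uccC}(\rC)}$ are then natural transformations of precisely the shapes appearing in \ref{MMMZ7} and \ref{MMMZ8}.

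Part \ref{MMMZ7} is then immediate: $L$ is fully faithful by Lemma~\ref{lemclosed}, and a left adjoint is fully faithful exactly when its unit is invertible, so $\eta$ witnesses $-\square_\rD\rD\square_\rC\rD\cong\Id_{\comod_{\uccC}(\rD)}$. For part \ref{MMMZ8} I would take the natural transformation to be $\theta$ and check that each component $\theta_\rM\colon LR\rM\to\rM$ is a monomorphism. The object $LR\rM=L(R\rM)$ lies in the essential image $\mathcal S$ of $L$, which by Lemma~\ref{lemclosed} is closed under subobjects inside the abelian category $\comod_{\uccC}(\rC)$. Let $\jmath\colon\rK\hookrightarrow LR\rM$ be the kernel of $\theta_\rM$; then $\rK\in\mathcal S$, say $\rK\cong L\rK'$. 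Because $L$ is fully faithful and $L\dashv R$, post-composition with $\theta_\rM$ is the bijection
$$\Hom_{\comod_{\uccC}(\rC)}(\rK,LR\rM)\;\cong\;\Hom_{\comod_{\uccC}(\rD)}(\rK',R\rM)\;\cong\;\Hom_{\comod_{\uccC}(\rC)}(\rK,\rM),$$
the first isomorphism being full faithfulness of $L$ and the second the adjunction. Since $\jmath$ is sent to $\theta_\rM\circ\jmath=0$, we get $\jmath=0$, hence $\rK=0$, so $\theta_\rM$ is monic.

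The only delicate point is bookkeeping: one must verify that each copy of $\rD$ occurring in the composite functors carries the bicomodule structure for which Lemmas~\ref{lemclosed} and~\ref{adjlemma} were stated, and that the composites really are associative cotensor products in $\ucC$; I expect this identification, rather than anything conceptual, to be the main obstacle. If one prefers to avoid the adjunction formalism, part \ref{MMMZ7} also follows from $\rD\square_\rC\rD\cong\rD$ (which, since $\iota$ is monic, reduces to coassociativity and counitality of $\mu_\rD$) together with the counit isomorphism $\rN\square_\rD\rD\cong\rN$, while part \ref{MMMZ8} follows by identifying $\rM\square_\rC\rD$ with the largest subcomodule of $\rM$ whose coaction factors through $\id_\rM\circ_0\iota$ and observing that $(\rM\square_\rC\rD)\square_\rD\rD_\rC\cong\rM\square_\rC\rD$ as $\rC$-comodules.
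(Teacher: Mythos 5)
Your proof is correct, and it sets up the same adjunction as the paper -- identifying $-\square_\rD\rD_\rC$ with $[\rD,-]$ via Lemma \ref{lemclosed} and invoking Lemma \ref{adjlemma} to get $[\rD,-]\dashv -\square_\rC\rD$ -- but from that point on it diverges: the paper simply remarks that (i) and (ii) are then exactly \cite[Corollary 7]{MMMZ} and \cite[Lemma 8]{MMMZ}, whereas you prove both statements internally. Your part \ref{MMMZ7} is the standard fact that a left adjoint is fully faithful precisely when the unit is invertible, with full faithfulness supplied by Lemma \ref{lemclosed}; your part \ref{MMMZ8} is a clean formal argument that the counit $\theta_\rM$ is monic, using only that the essential image of $-\square_\rD\rD_\rC$ is closed under subobjects (again Lemma \ref{lemclosed}), so the kernel of $\theta_\rM$ lies in that image and postcomposition with $\theta_\rM$ is a bijection on maps out of such objects (the identification of the composite $\Hom_{\comod_{\uccC}(\rC)}(\rK,LR\rM)\cong\Hom_{\comod_{\uccC}(\rD)}(\rK',R\rM)\cong\Hom_{\comod_{\uccC}(\rC)}(\rK,\rM)$ with $f\mapsto\theta_\rM\circ f$ is a routine adjunction computation, which you assert but do not write out). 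What each approach buys: the paper's version is shorter and leans on the established comodule-theoretic results of \cite{MMMZ}, while yours is self-contained, makes transparent that (i) is nothing but full faithfulness of $\Psi$ and (ii) a purely formal consequence of subobject-closure, and requires no comodule-level computation. Your worry about associativity of cotensor products is not really an issue, since the functors in the statement are just the sequential composites $\Phi\circ\Psi$ and $\Psi\circ\Phi$ of the two morphisms of $2$-representations, so only the transport of the unit and counit along the isomorphism $-\square_\rD\rD_\rC\cong[\rD,-]$ needs to be noted.
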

\proof
Denote by $\Psi$ the morphism $(-\square_\rD \rD_\rC):\comod_{\uccC}(\rD)\to\comod_{\uccC}(\rC)$ and by $\Phi$ the morphism $-\square_\rC\rD:\comod_{\uccC}(\rC)\to\comod_{\uccC}(\rD)$.
Note that $\Psi\cong [\rD,-]$ as argued in the proof of Lemma \ref{lemclosed}, and $(\Psi\cong [\rD,-],\Phi=-\square_{\rC}\rD)$ is an adjoint pair by Lemma \ref{adjlemma}.
Now (i) and (ii) are exactly the same as \cite[Corollary 7]{MMMZ} and \cite[Lemma 8]{MMMZ} respectively.
\endproof

\begin{lemma}\label{realise-subcat}
Suppose $\S$ is a full subcategory of $\comod_{\uccC}(\rC)$ that is $\cC$-stable, subobject-closed, and quotient-closed.
Let $i$ be the (fully faithful exact) embedding of $\S$ into $\comod_{\uccC}(\rC)$ and $p$ be its right adjoint.
Then $\rD:=[ip(\rC),ip(\rC)]$ is a subcoalgebra of $\rC$ so that $-\square_{\rD}\rD_\rC:\comod_{\uccC}(\rD)\to\comod_{\uccC}(\rC)$ induces an equivalence between $\comod_{\uccC}(\rD)$ and $\S$.
\end{lemma}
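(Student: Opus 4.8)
The plan is to recognise $\rI:=ip(\rC)$ as the regular comodule of the subcoalgebra we are after, build $\rD$ and the required equivalence by means of the realisation machinery of \cite{MMMT}, and only at the end exhibit $\rD$ as a subcoalgebra of $\rC$. So I would first show that $\rI$ is an injective cogenerator of $\S$ and that $\S$ is the abelianisation of a finitary $2$-representation generated by $\rI$. The regular comodule $\rC$ is an injective cogenerator of $\comod_{\uccC}(\rC)$ (its injective objects being direct summands of powers of $\rC$), so since $p$ is right adjoint to the exact, fully faithful $i$, the functor $\Hom_{\S}(-,\rI)\cong\Hom_{\comod_{\uccC}(\rC)}(i(-),\rC)$ is exact and faithful; hence $\rI$ is an injective cogenerator of $\S$. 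Using $\cC$-stability of $\S$ together with the adjunctions $({}^{*}\rF,\rF)$ available in the weakly fiat $\cC$, each $\rF\rI$ is likewise injective in $\S$, so $\add\{\rF\rI\mid\rF\in\cC\}$ is a finitary $2$-representation; and because $\rI$ cogenerates $\S$, every object of $\S$ embeds into a power of $\rI$ and hence (using quotient-closedness) is a kernel of a morphism between objects of $\add\{\rF\rI\mid\rF\in\cC\}$, whereas every such kernel lies in $\S$ (subobject-closedness). Thus $\S=\underline{\add\{\rF\rI\mid\rF\in\cC\}}$, with $\rI$ generating this finitary $2$-representation.

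Next I would apply the realisation theory recalled in Subsection~\ref{coalgrecall} to this finitary $2$-representation and its generator $\rI$: it supplies the coalgebra $1$-morphism $\rD:=[\rI,\rI]$ and shows that $[\rI,-]$ restricts to an equivalence of finitary $2$-representations onto $\inj_{\uccC}(\rD)$, hence an equivalence of abelian $2$-representations $[\rI,-]\colon\S\xrightarrow{\sim}\comod_{\uccC}(\rD)$, under which $\rI$ corresponds to the regular comodule $\rD$. One must observe that the internal hom computed in $\add\{\rF\rI\}$ coincides with the one computed in $\comod_{\uccC}(\rC)$, which is clear because the defining adjunction only involves Hom-spaces among objects of the full subcategory $\S$.

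It remains to exhibit $\rD$ as a subcoalgebra of $\rC$ and identify the embedding with $-\square_{\rD}\rD_{\rC}$. I would mimic the classical fact that the largest $\S$-subcomodule of $\rC$ is a subcoalgebra: with $j\colon\rI\hookrightarrow\rC$ the inclusion, the comultiplication $\mu_{\rC}\colon\rC\to\rC\rC$ is a morphism of right $\rC$-comodules for the coaction $\id_{\rC}\circ_{0}\mu_{\rC}$ (so $\rC\rC$ is cofree on $\rC\in\ucC$), whence $\mu_{\rC}\circ_{1}j$ is a comodule morphism with source in $\S$ and its image lies in $ip(\rC\rC)$. The key claim is that $ip$ sends cofree comodules to cofree comodules: $ip(\rF\rC)\cong\rF\rI$ for $\rF\in\ucC$; taking $\rF=\rC$ this gives $ip(\rC\rC)=\rC\rI$, so $\mu_{\rC}\circ_{1}j$ factors through $\id_{\rC}\circ_{0}j\colon\rC\rI\hookrightarrow\rC\rC$ via a comodule morphism $\alpha\colon\rI\to\rC\rI$, which by the adjunction defining $[\rI,\rI]$ corresponds to a morphism $\iota\colon\rD=[\rI,\rI]\to[\rC,\rC]=\rC$. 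Cancelling the monomorphisms gotten by horizontally composing $j$ with itself, one checks that $\iota$ is a monomorphism intertwining comultiplications and counits, so $\rD$ is a subcoalgebra of $\rC$. Then Lemma~\ref{lemclosed} gives that $-\square_{\rD}\rD_{\rC}$ is fully faithful and exact with $\cC$-stable, subobject- and quotient-closed essential image; and since $\rD_{\rC}\cong\rI=ip(\rC)\in\S$, Lemmas~\ref{12.8} and \ref{Iinfromleft} allow one to compute $[\rI,S]\square_{\rD}\rD_{\rC}\cong S$ for all $S\in\S$, so that $-\square_{\rD}\rD_{\rC}\circ[\rI,-]\cong i$ and the essential image of $-\square_{\rD}\rD_{\rC}$ is exactly $\S$.

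The hard part will be the identity $ip(\rF\rC)\cong\rF\rI$ --- that forming the largest $\S$-subobject commutes with the $\ucC$-action on cofree comodules. The inclusion $\rF\rI\subseteq ip(\rF\rC)$ is formal from stability of $\S$, but the reverse is the categorical shadow of ``a subcomodule of $\rF\otimes\rC$ lying in $\S$ lies already in $\rF\otimes\rD$'', a statement whose usual proof uses $\rD$, which is not yet available. Recasting it purely in terms of $\S$ (subobject- and quotient-closure, together with $\ucC$-stability) --- for instance by splitting off the counit of the cofree comodule, i.e.\ the retraction $\id_{\rF}\circ_{0}\varepsilon_{\rC}\circ_{0}\id_{\rC}$ of $\id_{\rF}\circ_{0}\mu_{\rC}$, to reduce to the case $\rF=\mathbbm{1}$ --- is where the real work lies. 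An alternative organisation that sidesteps this chicken-and-egg is to transport the coalgebra $\rD$ across the equivalence of the previous paragraph and prove directly that $-\square_{\rD}\rD_{\rC}$ realises $i$, which relocates the difficulty into a bookkeeping argument with cotensor products and internal homs based on Lemmas~\ref{12.8}--\ref{Iinfromleft}.
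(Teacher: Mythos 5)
Your overall route (apply the realisation machinery of Subsection~\ref{coalgrecall} to $\rI=ip(\rC)$, then exhibit $[\rI,\rI]$ as a subcoalgebra of $\rC$) is genuinely different from the paper's, and could in principle work, but as written it has two real gaps. First, the identity you single out as ``the hard part'', $ip(\rF\rC)\cong\rF\,ip(\rC)$, is left unproved, and it is needed already in your first paragraph (to embed every object of $\S$ into some $\rF\rI$ --- note that ``powers of $\rI$'' is not the right notion here, since injectives in $\comod_{\uccC}(\rC)$ are summands of $\rF\rC$, not of direct sums of copies of $\rC$). In fact this step is not where the difficulty lies: since $\S$ is $\cC$-stable and each $\rF\in\cC$ has the left adjoint ${}^*\rF$, one has $i\circ{}^*\rF\cong{}^*\rF\circ i$ on $\S$, and passing to right adjoints gives $p(\rF\rN)\cong\rF p(\rN)$ for all $\rN$ --- exactly the two-line adjunction argument the paper uses, and of the same flavour as your own argument that $\rF\rI$ is injective in $\S$. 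However, you also invoke the identity for $\rF=\rC$, which is a $1$-morphism of $\ucC$, not of $\cC$, so ${}^*\rC$ need not exist; to get $ip(\rC\rC)\cong\rC\rI$ you would additionally have to extend the isomorphism from $\cC$ to $\ucC$ via an injective copresentation of $\rC$ in $\ucC$ and left exactness of $ip$ and of horizontal composition, which you do not address.

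Second, and more seriously, the assertion that the adjoint $\iota\colon[\rI,\rI]\to[\rC,\rC]\cong\rC$ of your map $\alpha\colon\rI\to\rC\rI$ ``is a monomorphism intertwining comultiplications and counits'' is exactly the crux of the lemma, and ``cancelling the monomorphisms gotten by horizontally composing $j$ with itself'' is not an argument for it: monicity of $\iota$ is not a formal cancellation but rests on the special property of $\rI=ip(\rC)$ that every morphism from an object of $\S$ to $\rC$ factors through $\rI$ (a consequence of quotient-closure and $\cC$-stability), which in the paper is packaged as the isomorphism $[\iota',\rI]\colon[\rC,\rI]\xrightarrow{\sim}[\rI,\rI]$; and the compatibility of $\iota$ with the comultiplications is a substantial diagram chase with coevaluation maps that occupies most of the paper's proof. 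Your proposal never isolates the factorisation property of $\rI$, so neither of these verifications can be completed from what you have written; the closing paragraph explicitly defers them. So the plan is salvageable --- supply $p\rF\cong\rF p$ by the adjunction trick, handle $\rF=\rC$ by left exactness, and then prove monicity of $\iota$ and the coalgebra compatibilities along the lines of the paper --- but in its current form the proof is incomplete precisely at the points that carry the content of the lemma.
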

\begin{proof}
Consider the right $\rC$-comodule $\rB$ given by the sum of all images of right $\rC$-comodule morphisms of the form $f:\rM\to \rC$ with $\rM\in \S$.
Since $\S$ is quotient-closed, we have $\rB\in \S$.
In particular, $\rB$ coincides with $ip(\rC)$ (which is the sum of all subobjects of $\rC$ in $\S$), and the counit of the adjoint pair $(i,p)$ therefore defines a monomorphism $\iota':\rB\to \rC$.

For any $\rF\in \cC$, there is an exact sequence
\[
0\to \Hom_{\comod_{\uccC}(\rC)}(\rF\rB,\rB)\xrightarrow{\Hom(\rF\rB,\iota')=\iota'\circ-} \Hom_{\comod_{\uccC}(\rC)}(\rF\rB,\rC)
\]
in $\ucC$.
Since $\S$ is $\cC$-stable, we have $\rF\rB\in\S$.
By the construction of $\rB$, every morphism from an object of $\S$ to $\rC$ factors through $\iota'$, so the morphism in the above exact sequence is surjective and hence an isomorphism.
Using the adjoint pairs $([\rB,-],-\cdot\rB)$ and $(\rF,\rF^*)$, and the fact that $\rF[X,Y]\cong [X,\rF Y]$ for all $X,Y\in \comod_{\uccC}(\rC)$ and all $1$-morphism $\rF$, we obtain the following commutative diagram
\[
\xymatrix@C=60pt{
\Hom_{\comod_{\uccC}(\rC)}(\rF\rB,\rB) \ar[r]^{\iota'\circ-}\ar[d]^{\sim} & \Hom_{\comod_{\uccC}(\rC)}(\rF\rB,\rC)\ar[d]^{\sim}\\
\Hom_{\uccC}([\rB,\rF\rB],\mathbbm{1}) \ar[r]^{-\circ [\iota', \rF\rB]}\ar[d]^{\sim} & \Hom_{\uccC}([\rC,\rF\rB],\mathbbm{1}) \ar[d]^{\sim}\\
\Hom_{\uccC}(\rF[\rB,\rB],\mathbbm{1}) \ar[r]^{-\circ \rF[\iota', \rB]}\ar[d]^{\sim} & \Hom_{\uccC}(\rF[\rC,\rB],\mathbbm{1})\ar[d]^{\sim}\\
\Hom_{\uccC}([\rB,\rB],\rF^*) \ar[r]^{-\circ[\iota',\rB]} & \Hom_{\uccC}([\rC,\rB],\rF^*).
}
\]
Hence, the bottom row is an isomorphism which holds for any $1$-morphism $\rF$.
Thus, $[\iota',\rB]\colon[\rC,\rB]\to [\rB,\rB]$ is an isomorphism whose inverse we denote by $\alpha$.
Using that $[\rC,-]\cong \Id_{\comod_{\uccC}(\rC)}$ by Corollary \ref{idcohom} yields commutative diagram
$$\xymatrix{
[\rB,\rB]\ar^{\alpha }_{\sim}[r]& [\rC,\rB]\ar^{[\rC,\iota']}[r] \ar_{\sim}[d]&[\rC,\rC]\ar^{\sim}[d]\\
& \rB\ar@{^(->}^{\iota'}[r] & \rC,
}$$  with vertical isomorphisms. In particular, $[\rC,\iota']$ is mono.
So setting $\rD:=[\rB,\rB]$, we obtain a monomorphism $\iota:\rD \to \rC$ in $\underline{\cC}$.

Showing that $\rD\overset{\iota}{\hookrightarrow}\rC$ is a subcoalgebra is equivalent to showing that $[\rB,\rB] \overset{\theta}{\hookrightarrow}[\rC,\rC]$ is a subcoalgebra, where $\theta:=[\rC,\iota']\circ\alpha$.
For simplicity, let us denote by $\mu_\rC,\epsilon_\rC$ the comultiplication and counit of $[\rC,\rC]$ throughout the rest of the proof.

We first verify the compatibility of the counit maps of $\rD$ and $\rC$, i.e. $\epsilon_\rD=\epsilon_\rC\circ\theta$.
Using the definition $\theta=[\rC,\iota']\circ \alpha$ and that $\alpha$ is the inverse of $([\iota',\rB])^{-1}$, this is equivalent to showing that $\epsilon_\rC\circ_1[\rC,\iota'] = \epsilon_\rD \circ_1 [\iota',\rB]$.
Recall that, for any $\rX\in\comod_{\uccC}(\rC)$, the counit of $[\rX,\rX]$ is the map in $\ucC$ corresponding to $\id_\rX$ under the adjunction isomorphism $\Hom_{\uccC}([\rX,\rX],\one)\cong \Hom_{\comod_{\uccC}(\rC)}(\rX,\rX)$.
We consider the commutative diagrams
$$\xymatrix{
\Hom_{\uccC}([\rC,\rC],\one)\ar@{-}^{\sim}[r]\ar^{-\circ_1[\rC,\iota']}[d]& \Hom_{\comod_{\uccC}(\rC)} (\rC,\rC)\ar^{-\circ\iota'}[d]\\
\Hom_{\uccC}([\rC,\rB],\one) \ar@{-}^{\sim}[r]& \Hom_{\comod_{\uccC}(\rC)} (\rB,\rC)
}$$
and
$$\xymatrix{
\Hom_{\uccC}([\rB,\rB],\one)\ar@{-}^{\sim}[r]\ar^{-\circ_1[\iota', \rB]}[d]& \Hom_{\comod_{\uccC}(\rC)} (\rB,\rB)\ar^{\iota'\circ-}[d]\\
\Hom_{\uccC}([\rC,\rB],\one) \ar@{-}^{\sim}[r]& \Hom_{\comod_{\uccC}(\rC)} (\rB,\rC)
}$$

where the second is obtained from 
combining the natural transformation $[\iota',-]:[\rC,-]\to[\rB,-]$ with the adjoint pairs $([\rB,-],-\cdot\rB)$ and $([\rC,-],-\cdot\rC)$.
Since $\id_{\rC}\circ_1\iota' = \iota' \circ_1 \id_{\rB}$, and these two maps correspond to $\epsilon_\rC \circ_1[\rC,\iota']$ and $\epsilon_\rD \circ_1 [\iota',\rB]$ respectively on the left columns of the diagrams, the latter two maps are equal, as claimed.

To show compatibility of the comultiplications, let us start by recalling some essential facts.
For any $\rX,\rY\in\comod_{\uccC}(\rC)$, the coevaluation map $\mathrm{coev}_{\rX,\rY}:\rY\to[\rX,\rY]\rX$ is the map corresponding to $\id_{[\rX,\rY]}$ under the adjunction $\Hom_{\uccC}([\rX,\rY],[\rX,\rY])\cong \Hom_{\comod_{\uccC}(\rC)}(\rY,[\rX,\rY]\rX)$.
The comultiplication of the coalgebra $[\rX,\rX]$ is given by the map in $\ucC$ corresponding to $(\id_{[\rX,\rX]}\circ_0\mathrm{coev}_{\rX,\rX})\circ_1 \mathrm{coev}_{\rX,\rX}$.

Observe that the following diagram is commutative.
\[
\xymatrix@C=100pt@R=35pt{
\rB \ar[r]^{\mathrm{coev}_{\rB,\rB}} \ar[d]_{\id}
& [\rB,\rB]\rB \ar[r]^{\id_{[\rB,\rB]}\circ_0\mathrm{coev}_{\rB,\rB}} \ar[d]_{\id} 
& [\rB,\rB][\rB,\rB]\rB  \ar[d]_{\id_\rD\circ_0\alpha\circ_0\iota'}\\
\rB \ar[r]^{\mathrm{coev}_{\rB,\rB}} \ar[d]_{\id} & [\rB,\rB]\rB \ar[r]^{\id_{[\rB,\rB]}\circ_0\mathrm{coev}_{\rB,\rB}} \ar[d]_{\alpha\circ_0\iota'}& [\rB,\rB][\rC,\rB]\rC \ar[d]_{\alpha\circ_0[\rC,\iota']\circ_0\id_\rC}\\
\rB \ar[r]^{\mathrm{coev}_{\rC,\rB}} \ar[d]_{\iota'}& [\rC,\rB]\rC \ar[r]^{\id_{[\rC,\rB]}\circ_0\mathrm{coev}_{\rC,\rC}} \ar[d]_{[\rC,\iota']\circ_0\id_\rC}& [\rC,\rB][\rC,\rC]\rC \ar[d]_{[\rC,\iota']\circ_0\id_{[\rC,\rC]\rC}}\\
\rC \ar[r]^{\mathrm{coev}_{\rC,\rC}} & [\rC,\rC]\rC \ar[r]^{\id_{[\rC,\rC]}\circ_0\mathrm{coev}_{\rC,\rC}} & [\rC,\rC][\rC,\rC]\rC 
}
\]
Indeed, commutativity of the top left square is trivial; that of the bottom right square is easy, since both maps are just $[\rC,\iota']\circ_0 \mathrm{coev}_{\rC,\rC}$.
It is also easy to see that commutativity of the top (resp. middle) right square follows immediately from that of the middle (resp. bottom) left square as the former are obtained from the latter by horizontally composing with identity maps.

To see that the middle left square commutes (i.e. $\mathrm{coev}_{\rC,\rB}=(\alpha\circ_0\iota')\circ_1\mathrm{coev}_{\rB,\rB}$), we use the commutative diagrams
$$\xymatrix{
\Hom_{\uccC}([\rC,\rB],[\rC,\rB])\ar@{-}^{\sim}[r]\ar_{[\iota',\rB]\circ_1-}[d]& \Hom_{\comod_{\uccC}(\rC)} (\rB,[\rC,\rB]\rC)\ar^{([\iota',\rB]\circ_0\id_\rC)\circ-}[d]\\
\Hom_{\uccC}([\rC,\rB],[\rB,\rB]) \ar@{-}^{\sim}[r]& \Hom_{\comod_{\uccC}(\rC)} (\rB,[\rB,\rB]\rC)
}$$
and
$$\xymatrix{
\Hom_{\uccC}([\rB,\rB],[\rB,\rB])\ar@{-}^{\sim}[r]\ar_{-\circ_1[\iota',\rB]}[d]& \Hom_{\comod_{\uccC}(\rC)} (\rB,[\rB,\rB]\rB)\ar^{(\id_{[\rB,\rB]}\circ_0\iota')\circ-}[d]\\
\Hom_{\uccC}([\rC,\rB],[\rB,\rB]) \ar@{-}^{\sim}[r]& \Hom_{\comod_{\uccC}(\rC)} (\rB,[\rB,\rB]\rC),
}
$$
as well as $[\iota',\rB]\circ_1\id_{[\rC,\rB]} = \id_{[\rB,\rB]}\circ_1 [\iota',\rB]$. Together, these yield
\[
([\iota',\rB]\circ_0\id_\rC)\circ_1\mathrm{coev}_{\rC,\rB} = (\id_{[\rB,\rB]}\circ_0\iota')\circ_1\mathrm{coev}_{\rB,\rB},
\]
hence
$\mathrm{coev}_{\rC,\rB} = (\alpha\circ_0\id_\rC)\circ_1 (\id_{[\rB,\rB]}\circ_0\iota')\circ_1\mathrm{coev}_{\rB,\rB} = (\alpha\circ_0 \iota')\circ_1 \mathrm{coev}_{\rB,\rB}$.

Commutativity of the bottom left square (i.e. $\mathrm{coev}_{\rC,\rC}\circ_1\iota' = ([\rC,\iota']\circ_0\id_\rC)\circ_1 \mathrm{coev}_{\rC,\rB}$) follows similarly from the commutative diagrams
$$\xymatrix{
\Hom_{\uccC}([\rC,\rC],[\rC,\rC])\ar@{-}^{\sim}[r]\ar^{-\circ_1[\rC,\iota']}[d]& \Hom_{\comod_{\uccC}(\rC)} (\rC,[\rC,\rC]\rC)\ar^{-\circ\iota'}[d]\\
\Hom_{\uccC}([\rC,\rB],[\rC,\rC]) \ar@{-}^{\sim}[r]& \Hom_{\comod_{\uccC}(\rC)} (\rB,[\rC,\rC]\rC)
}$$
and 
$$\xymatrix{
\Hom_{\uccC}([\rC,\rB],[\rC,\rB])\ar@{-}^{\sim}[r]\ar^{[\rC,\iota']\circ_1-}[d]& \Hom_{\comod_{\uccC}(\rC)} (\rB,[\rC,\rB]\rC)\ar^{([\rC,\iota']\circ_0\id_\rC)\circ-}[d]\\
\Hom_{\uccC}([\rC,\rB],[\rC,\rC]) \ar@{-}^{\sim}[r]& \Hom_{\comod_{\uccC}(\rC)} (\rB,[\rC,\rC]\rC),
}$$
together with $[\rC,\iota']\circ_1\id_{[\rC,\rB]} = \id_{[\rC,\rC]}\circ_1[\rC,\iota']$.

Now that we know all six squares commute, composing the maps on the outer boundary of the big square yields
\begin{equation}\label{eq-bigsq}
\mu_\rC^\vee \circ_1\iota' = (\theta\circ_0\theta\circ_0\iota') \circ_1\mu_\rD^\vee,
\end{equation}
where $\mu_\rC^\vee := (\id_{[\rC,\rC]}\circ_0\mathrm{coev}_{\rC,\rC}) \circ_1\mathrm{coev}_{\rC,\rC}$ and $\mu_\rD^\vee := (\id_{[\rB,\rB]}\circ_0\mathrm{coev}_{\rB,\rB}) \circ_1\mathrm{coev}_{\rB,\rB}$ are the maps that correspond to $\mu_\rC$ and $\mu_\rD$ respectively under adjunction.

Using the commutative diagram
$$\xymatrix{
\Hom_{\uccC}([\rC,\rC],[\rC,\rC][\rC,\rC])\ar@{-}^{\sim}[r]\ar_{-\circ_1[\rC,\iota']}[d]& \Hom_{\comod_{\uccC}(\rC)} (\rC,[\rC,\rC][\rC,\rC]\rC)\ar^{-\circ\iota'}[d]\\
\Hom_{\uccC}([\rC,\rB],[\rC,\rC][\rC,\rC]) \ar@{-}^{\sim}[r]& \Hom_{\comod_{\uccC}(\rC)} (\rB,[\rC,\rC][\rC,\rC]\rC),
}$$
we can see that the left-hand map $\mu_\rC^\vee \circ_1\iota'$ of \eqref{eq-bigsq} corresponds to $\mu_\rC\circ_1[\rC,\iota']$ under the adjunction isomorphism of the bottom row.

We claim that the right-hand map $(\theta\circ_0\theta\circ_0\iota')\mu_\rD^\vee$ of \eqref{eq-bigsq} corresponds to $(\theta\circ_0\theta)\circ_1\mu_\rD\circ_1[\iota',\rB]$ under the same adjunction isomorphism.
Indeed, using the commutative diagram
$$
\xymatrix{
\Hom_{\uccC}([\rB,\rB],[\rB,\rB][\rB,\rB])\ar@{-}^{\sim}[r]\ar_{-\circ_1[\iota',\rB]}[d]& \Hom_{\comod_{\uccC}(\rC)} (\rB,[\rB,\rB][\rB,\rB]\rB)\ar^{(\id_{[\rB,\rB][\rB,\rB]}\circ_0\iota')\circ_1-}[d]\\
\Hom_{\uccC}([\rC,\rB],[\rB,\rB][\rB,\rB]) \ar@{-}^{\sim}[r]\ar_{(\theta\circ_0\theta)\circ_1-}[d]& \Hom_{\comod_{\uccC}(\rC)} (\rB,[\rB,\rB][\rB,\rB]\rC) \ar^{(\theta\circ_0\theta\circ_0\id_\rC)\circ_1-}[d] \\
\Hom_{\uccC}([\rC,\rB],[\rC,\rC][\rC,\rC]) \ar@{-}^{\sim}[r]& \Hom_{\comod_{\uccC}(\rC)} (\rB,[\rC,\rC][\rC,\rC]\rC),
}
$$
the correspondence between $\mu_\rD$ and $\mu_\rD^\vee$ on the top row induces the correspondence between
$\mu_D\circ_1[\iota',\rB]$ and $(\id_{[\rB,\rB][\rB,\rB]}\circ_0\iota')\circ_1\mu_\rD^\vee$ on the second row, which in turn induces a correspondence between $(\theta\circ_0\theta)\circ_1\mu_D\circ_1[\iota',\rB]$ and $(\theta\circ_0\theta\circ_0\id_\rC)\circ_1 (\id_{[\rB,\rB][\rB,\rB]}\circ_0\iota')\circ_1\mu_\rD^\vee = (\theta \circ_0\theta\circ_0\iota')\circ_1\mu_\rD^\vee$ on the bottom row.

Thus, \eqref{eq-bigsq} is equivalent to saying that $\mu_\rC\circ_1[\rC,\iota'] =(\theta\circ_0\theta)\circ_1\mu_D\circ_1[\iota',\rB]$.
Since $\theta = [\rC,\iota']\circ_1([\iota',\rB])^{-1}$, we obtain that $\mu_\rC\circ_1\theta =(\theta\circ_0\theta)\circ_1\mu_D$.
This completes the proof of the compatibility between comultiplications of $\rD$ and $[\rC,\rC]\cong \rC$ under $\theta$.

It remains to show the equivalence $-\square_{\rD}\rD_\rC:\comod_{\uccC}(\rD)\to\S$.
For a $\rD$-comodule $\rM$, we have an exact sequence $0\to \rM\to \rF\rD$ in $\comod_{\uccC}(\rD)$ for some $\rF\in \cC$.
Recall that $ip(\rC)=\rB\cong [\rC,\rB] \cong [\rB,\rB]$, so we have isomorphisms of right $\rC$-comodules $ip(\rC)\cong \rD_\rC \cong \rC\square_\rC\rD\square_\rD\rD_\rC$.  In particular, we have $\rD_\rC\in \S$.
Since $\S$ is $\cC$-stable, 
we have $(\rF\rD)\square_\rD\rD_\rC\cong \rF\rD_\rC\in \S$, so it follows from the assumption of $\S$ being closed under subobjects that $\rM\square_\rD\rD_\rC\in \S$.
Hence, $-\square_\rD\rD_\rC$ induces a well-defined functor from $\comod_{\uccC}(\rD)$ to $\S$.

Recall from Lemma \ref{lemclosed} that $-\square_{\rD}\rD_\rC$ is fully faithful.  It remains to show that it is dense.  Indeed, if $\rM\in \S$, then we have an exact sequence $0\to \rM\to \rF\rC$ in $\comod_{\uccC}(\rC)$ for some $\rF\in\cC$, which induces an exact sequence $0\to ip(\rM)\to ip(\rF\rC)$.  By assumption, we have $ip(\rM)=\rM$.  
Since by assumption $i(\rF\rM)\cong \rF i(\rM)$ for all $\rM\in\S$, we also have $p(\rF\rN)\cong \rF p(\rN)$ for all $\rN\in\comod_{\uccC}(\rC)$, as can be seen from the chain of isomorphisms
\begin{equation*}
\begin{split}
\Hom_{\S}(\rM, p(\rF\rN))&\cong \Hom_{\comod_{\uccC}(\rC)}(i(\rM), \rF\rN)\\
&\cong \Hom_{\comod_{\uccC}(\rC)}({}^*\rF i(\rM), \rN)\\
&\cong \Hom_{\comod_{\uccC}(\rC)}( i({}^*\rF\rM), \rN)\\
& \cong \Hom_{\S}( {}^*\rF\rM, p(\rN))\\
& \cong \Hom_{\S}( \rM, \rF p(\rN)),\\
\end{split}
\end{equation*}
which holds for any $\rM\in\S$.
We thus have $ip(\rF\rC)\cong \rF (ip(\rC))\cong \rF\rD$, which is in the essential image of $-\square_\rD\rD_\rC$.  Thus, as $\comod_{\uccC}(\rD)$ is closed under subobjects by Lemma \ref{lemclosed} and $-\square_\rD\rD_\rC$ is exact, $\rM$ is also in the essential image of $-\square_{\rD}\rD_\rC$.
\end{proof}

This leads us to the following proposition, which generalises \cite[Theorem 4.2(iii)]{NT}.

\begin{proposition}\label{cat-coalg-bij}
The construction in Lemma \ref{realise-subcat} induces a bijection between the set of $\cC$-stable subobject-closed quotient-closed full subcategories of $\comod_{\uccC}(\rC)$ up to equivalence and the set of subcoalgebras of $\rC$ up to isomorphism.
\end{proposition}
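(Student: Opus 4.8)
The plan is to exhibit an explicit inverse to the assignment $\S\mapsto\rD_\S:=[ip(\rC),ip(\rC)]$ of Lemma~\ref{realise-subcat}, namely the assignment sending a subcoalgebra $\rD\overset{\iota}{\hookrightarrow}\rC$ to the essential image $\S_\rD$ of $-\square_\rD\rD_\rC\colon\comod_{\uccC}(\rD)\to\comod_{\uccC}(\rC)$. By Lemma~\ref{lemclosed} this functor is a fully faithful exact morphism of $2$-representations with subobject-closed and quotient-closed image, and, being a morphism of $2$-representations, its image is also $\cC$-stable; thus $\S_\rD$ lies in the first set, while $\rD_\S$ lies in the second by Lemma~\ref{realise-subcat}. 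Well-definedness on the stated equivalence classes is routine ($\S_\rD$ depends only on the isomorphism class of the subcoalgebra $\rD$, and $[ip(\rC),ip(\rC)]$ only on which subobjects of $\rC$ belong to $\S$). One of the two round trips is then immediate: Lemma~\ref{realise-subcat} asserts precisely that $-\square_{\rD_\S}(\rD_\S)_\rC$ induces an equivalence $\comod_{\uccC}(\rD_\S)\xrightarrow{\ \sim\ }\S$, so $\S_{\rD_\S}=\S$.

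For the other round trip $\rD\mapsto\S_\rD\mapsto\rD_{\S_\rD}$, I would first identify the comodule $ip(\rC)$, where $i\colon\S_\rD\hookrightarrow\comod_{\uccC}(\rC)$ is the inclusion and $p$ its right adjoint. Writing $\Psi=-\square_\rD\rD_\rC\cong[\rD,-]$ and $\Phi=-\square_\rC\rD$, Lemma~\ref{MMMZCor7Lem8} gives that $(\Psi,\Phi)$ is an adjoint pair with invertible unit and monic counit $\varepsilon$, so that $\Psi$ is an equivalence of $2$-representations onto $\S_\rD$. A short chase with the naturality of $\varepsilon$ (and the fact that $\varepsilon_\rM$ is invertible for $\rM$ in the essential image of $\Psi$) shows that $\varepsilon_\rC\colon\Psi\Phi(\rC)\hookrightarrow\rC$ realises the largest subobject of $\rC$ lying in $\S_\rD$, which is exactly $ip(\rC)$; using the standard isomorphisms $\rC\square_\rC\rD\cong\rD$ and $\rD\square_\rD\rD_\rC\cong\rD_\rC$ this yields $ip(\rC)\cong\rD_\rC$, with the counit identified with $\iota$. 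Hence $\rD_{\S_\rD}=[ip(\rC),ip(\rC)]\cong[\rD_\rC,\rD_\rC]$. Since $\S_\rD$ is a $\cC$-stable full abelian subcategory of $\comod_{\uccC}(\rC)$, the internal hom $[\rD_\rC,-]$ computed there agrees with the one computed in $\comod_{\uccC}(\rC)$, so transporting along the equivalence of $2$-representations $\Psi\colon\comod_{\uccC}(\rD)\xrightarrow{\ \sim\ }\S_\rD$ (which sends $\rD$ to $\rD_\rC$) gives $[\rD_\rC,\rD_\rC]\cong[\rD,\rD]\cong\rD$ as coalgebra $1$-morphisms, the last isomorphism being the defining property of the realisation morphism.

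It remains to promote this to an isomorphism of \emph{subcoalgebras of $\rC$}. Here I would use that a subobject of $\rC$ admits at most one subcoalgebra structure, the comultiplication and counit being forced by the monomorphism into $\rC$ (this uses left exactness of horizontal composition, as in the setup), so it suffices to check that the embedding $\theta\colon\rD_{\S_\rD}\to\rC$ produced in Lemma~\ref{realise-subcat} has the same image in $\rC$ as $\iota$. This is read off from the commutative square in that proof: after the identifications $[\rC,ip(\rC)]\cong ip(\rC)$ (Corollary~\ref{idcohom}) and $[\rC,\rC]\cong\rC$, the map $\theta=[\rC,\iota']\circ\alpha$ becomes an isomorphism followed by the counit $\iota'\colon ip(\rC)\to\rC$, whose image we have just identified with that of $\iota$. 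The two assignments are therefore mutually inverse. I expect the one genuinely delicate point to be exactly this bookkeeping — keeping track of the three avatars of $ip(\rC)$ (a subobject of $\rC$, the object $\Psi\Phi(\rC)$, and $\rD_\rC$) together with their compatible monomorphisms into $\rC$ — so that the final identification $\rD_{\S_\rD}\cong\rD$ is an isomorphism of subcoalgebras and not merely of abstract coalgebra $1$-morphisms; the rest is routine adjunction manipulation on top of the lemmas already established.
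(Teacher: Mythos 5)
Your proposal is correct and follows essentially the same route as the paper's proof: the same inverse assignment $\rD\mapsto$ essential image of $-\square_\rD\rD_\rC$, the round trip $\S_{\rD_\S}=\S$ directly from Lemma \ref{realise-subcat}, and for the other round trip the same identification $ip(\rC)\cong\rD_\rC$ via Lemma \ref{MMMZCor7Lem8} followed by $[\rD_\rC,\rD_\rC]\cong\rD$. Your only deviations --- obtaining $[\rD_\rC,\rD_\rC]\cong\rD$ by transporting internal homs along the equivalence $\comod_{\uccC}(\rD)\simeq\S_\rD$ instead of re-running the opening of Lemma \ref{realise-subcat} (which gives $[\rD_\rC,\rD_\rC]\cong[\rC,\rD]\cong\rD$), and the added bookkeeping that the isomorphism is compatible with the embeddings into $\rC$ --- are refinements of the same argument rather than a different proof.
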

\begin{proof}
Let $\Omega$ be the set of $\cC$-stable subobject-closed quotient-closed full subcategories of $\comod_{\uccC}(\rC)$ up to equivalence, and $\Phi$ be the set of subcoalgebras of $\rC$ up isomorphism.
By Lemma \ref{realise-subcat}, assigning $\S \mapsto [ip(\rC),ip(\rC)]$, where $i$ is the inclusion of $\S$ into $\comod_{\uccC}(\rC)$ and $p$ is the right adjoint of $i$, defines a map $f:\Omega\to\Phi$.

On the other hand, for a subcoalgebra $\rD$, it follows from Lemma \ref{lemclosed} that $\comod_{\uccC}(\rD)$ is equivalent to a subobject-closed quotient-closed full subcategory of $\comod_{\uccC}(\rC)$.  Note that this subcategory is also $\cC$-stable as $\rD$ is a coalgebra $1$-morphism in $\ucC$.
Clearly, isomorphic subcoalgebras define the same full subcategory up to equivalence.
Hence, we have a map $g:\Phi\to \Omega$.

Starting with $\S\in \Omega$, we have $gf(\S)=\comod_{\uccC}(f(\S))$, which is equivalent to $\S$ by Lemma \ref{realise-subcat}; this means that $gf=\id_\Omega$.
For $\rD\in \Phi$, Lemma \ref{MMMZCor7Lem8} says that the inclusion of $\comod_{\uccC}(\rD)$ into $\comod_{\uccC}(\rC)$ and its right adjoint are given by $-\square_\rD\rD_\rC$ and $-\square_\rC\rD$ respectively.
Since $\rC\square_\rC\rD\square_\rD\rD_\rC\cong \rD_\rC$, the subcoalgebra $fg(\rD)$ is given $[\rD_\rC,\rD_\rC]$.
By the same argument as in the first two paragraphs in the proof of Lemma \ref{realise-subcat}, we have $[\rD_\rC,\rD_\rC]\cong [\rC,\rD] \cong \rD$.
Therefore, we have $fg(\rD)\cong \rD$, i.e. $fg=\id_\Phi$ as required.
\end{proof}

\section{Coidempotent subalgebras and extensions}\label{coidext}

\subsection{Coidempotent subcoalgebras.}
Following \cite{NT}, we define the following notion, which, in the classical setting, is dual to idempotent quotient algebras $A/AeA$.

\begin{definition}\label{def:coidem}
Let $\rC$ be a coalgebra $1$-morphism in $\underline{\cC}$ and $\rD$ a subcoalgebra of $\rC$. We say that  $\rD$ an {\bf coidempotent subcoalgebra} of $\rC$ if $\mu_{\rC}^{-1}(\rC\rD+\rD\rC)=\rD$ or, equivalently, for $\rJ=\rC/\rD$, the map $\mu_\rJ:=(\id_\rJ\circ_0\pi_\rJ)\circ_1\rho_\rJ:\rJ\to \rJ\rJ$ is a monomorphism in $\ucC$, where $\pi_\rJ:\rC\to\rJ$ is the natural projection and $\rho_\rJ$ is the right $\rC$-coaction map of $\rJ$.
\end{definition}

\begin{lemma}\label{seqzero} 
Let $\rC$ be a coalgebra $1$-morphism in $\underline{\cC}$ and $\rD$ a subcoalgebra. Set $\rJ=\rC/\rD$ and let $\rI$ be the injective hull of $\rJ$ in $\comod_{\underline{\ccC}}(\rC)$. 
\begin{enumerate}[$($i$)$]
\item\label{seqzero1} $\rD$ is coidempotent if and only if $\rJ\square_{\rC}\rD=0$.
\item\label{seqzero2} For $\rQ\in \inj_{\underline{\ccC}}(\rC)$, if $\rQ\square_{\rC}\rD=0$, then $\rQ\in \add_{\inj_{\underline{\ccC}}(\rC)}\{\rF\rI\mid \rF\in\cC\}$.  Moreover, the converse holds when $\rD$ is coidempotent.
\end{enumerate}
\end{lemma}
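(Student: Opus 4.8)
The plan is to prove~(i) first, since its identification of $\rJ\square_\rC\rD$ is precisely what drives~(ii). For~(i): since $\rD$ is a subcoalgebra, $\mu_\rC$ restricted to $\rD$ factors through the monomorphism $\iota\circ_0\id_\rD\colon\rD\rD\hookrightarrow\rC\rD$, so $\rD$ is a left coideal of $\rC$ and $0\to\rD\xrightarrow{\iota}\rC\xrightarrow{\pi}\rJ\to 0$ is a short exact sequence of left $\rC$-comodules. The functor $\rJ\square_\rC-$ is left exact --- its value is a kernel assembled from horizontal compositions, which are left exact in each variable, so this follows by the usual kernel chase --- so applying it to the above sequence gives an exact sequence $0\to\rJ\square_\rC\rD\to\rJ\square_\rC\rC\xrightarrow{\,\id_\rJ\square_\rC\pi\,}\rJ\square_\rC\rJ$. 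Under the canonical isomorphism $\rho_\rJ\colon\rJ\xrightarrow{\sim}\rJ\square_\rC\rC$ and the inclusion $\rJ\square_\rC\rJ\hookrightarrow\rJ\rJ$, the middle map becomes $\rJ\xrightarrow{\rho_\rJ}\rJ\rC\xrightarrow{\id_\rJ\circ_0\pi}\rJ\rJ$, which is exactly the map $\mu_\rJ=(\id_\rJ\circ_0\pi_\rJ)\circ_1\rho_\rJ$ of Definition~\ref{def:coidem}. Hence $\rJ\square_\rC\rD\cong\ker(\mu_\rJ)$, so $\rJ\square_\rC\rD=0$ if and only if $\mu_\rJ$ is a monomorphism, i.e. if and only if $\rD$ is coidempotent.

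The engine for~(ii) is the analogue of the classical ``largest $\rD$-subcomodule'' description of the cotensor product (cf.~\cite{BW}). Write $i=-\square_\rD\rD_\rC\colon\comod_{\uccC}(\rD)\to\comod_{\uccC}(\rC)$; it is fully faithful and exact by Lemma~\ref{lemclosed} and a left adjoint of $-\square_\rC\rD$ by Lemma~\ref{adjlemma} (since $i\cong[\rD,-]$, as in the proof of Lemma~\ref{lemclosed}). Call a subobject of an object of $\comod_{\uccC}(\rC)$ a \emph{$\rD$-subcomodule} if it lies in the essential image of $i$. For $\rM\in\comod_{\uccC}(\rC)$ the counit $\varepsilon_\rM\colon i(\rM\square_\rC\rD)\to\rM$ is a monomorphism by Lemma~\ref{MMMZCor7Lem8}(ii), and via the adjunction together with full faithfulness of $i$ every monomorphism $i\rN'\hookrightarrow\rM$ factors through $\varepsilon_\rM$; hence the image of $\varepsilon_\rM$ is the largest $\rD$-subcomodule of $\rM$, and in particular $\rM\square_\rC\rD=0$ if and only if $\rM$ has no nonzero $\rD$-subcomodule.

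Granting this, for the first assertion of~(ii) let $\rQ\in\inj_{\uccC}(\rC)$ with $\rQ\square_\rC\rD=0$. Then $\rQ$ is a direct summand of $\rF_0\rC$ for some $1$-morphism $\rF_0\in\cC$; fixing a split monomorphism $\rQ\hookrightarrow\rF_0\rC$ and composing with $\rF_0\pi$ gives $g\colon\rQ\hookrightarrow\rF_0\rC\xrightarrow{\rF_0\pi}\rF_0\rJ$ with $\ker g=\rQ\cap\rF_0\rD$ inside $\rF_0\rC$. Since the essential image of $-\square_\rD\rD_\rC$ is $\cC$-stable and subobject-closed (Lemma~\ref{lemclosed}), $\rF_0\rD$ lies in it, hence so does its subobject $\ker g$; thus $\ker g$ is a $\rD$-subcomodule of $\rQ$, and $\rQ\square_\rC\rD=0$ forces $\ker g=0$. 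So $g$ is a monomorphism $\rQ\hookrightarrow\rF_0\rJ\hookrightarrow\rF_0\rI$ (using $\rJ\hookrightarrow\rI$ and exactness of $\rF_0(-)$), and since $\rQ$ is injective it is a direct summand of $\rF_0\rI$, whence $\rQ\in\add_{\inj_{\uccC}(\rC)}\{\rF\rI\mid\rF\in\cC\}$. For the converse, assume $\rD$ is coidempotent. By additivity, and since $\rF(-)$ is exact and hence commutes with the kernel defining the cotensor product (so $\rF\rI\square_\rC\rD\cong\rF(\rI\square_\rC\rD)$), it suffices to show $\rI\square_\rC\rD=0$. By~(i) we have $\rJ\square_\rC\rD=0$, so $\rJ$ has no nonzero $\rD$-subcomodule; if $\rN\hookrightarrow\rI$ were a nonzero $\rD$-subcomodule then $\rN\cap\rJ\neq 0$ by essentiality of $\rJ\hookrightarrow\rI$, and $\rN\cap\rJ$ would be a nonzero $\rD$-subcomodule of $\rJ$ (a subobject of the $\rD$-subcomodule $\rN$), a contradiction; hence $\rI\square_\rC\rD=0$.

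The step I expect to demand the most care is this ``largest $\rD$-subcomodule'' description and its vanishing criterion: it is the common mechanism behind both implications in~(ii), and carrying it over from the classical coalgebra setting rests on monicity of the counit (Lemma~\ref{MMMZCor7Lem8}(ii)), full faithfulness of $-\square_\rD\rD_\rC$ (Lemma~\ref{lemclosed}), and the adjunction of Lemma~\ref{adjlemma}. The remaining ingredients --- that every injective $\rC$-comodule is a direct summand of some $\rF_0\rC$, that monomorphisms out of injectives split, and the $\cC$-stability and subobject-closedness of the image of $-\square_\rD\rD_\rC$ --- are all already available.
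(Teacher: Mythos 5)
Your proof is correct, and part (i) is essentially the paper's own argument (apply the left exact functor $\rJ\square_\rC-$ to $0\to\rD\to\rC\to\rJ\to 0$ and identify the resulting map $\rJ\square_\rC\rC\to\rJ\square_\rC\rJ$ with $\mu_\rJ$, so that $\rJ\square_\rC\rD\cong\ker\mu_\rJ$). In part (ii) you diverge in mechanism. For the forward direction the skeleton is the same as the paper's (embed $\rQ$ into $\rF_0\rC$, map to $\rF_0\rJ$, show the kernel must vanish, then split off $\rQ$ from $\rF_0\rI$), but where the paper argues that a nonzero image of $\rG\rD\to\rQ$ forces $\rQ\square_\rC\rD\neq 0$ via Lemma \ref{MMMZCor7Lem8}(i) and left exactness of $-\square_\rC\rD$, you instead run everything through your ``largest $\rD$-subcomodule'' criterion: $\rM\square_\rC\rD=0$ iff $\rM$ has no nonzero subobject in the essential image of $-\square_\rD\rD_\rC$, which indeed follows from the adjunction of Lemma \ref{adjlemma}, the monic transformation of Lemma \ref{MMMZCor7Lem8}\eqref{MMMZ8}, and the closure properties of Lemma \ref{lemclosed}. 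The genuine difference is in the converse: the paper passes to the injective hull $\rF_0$ of $\rJ$ in $\ucC$, uses monicity of $\mu_\rJ$ to get $\rJ\hookrightarrow\rJ\rJ\hookrightarrow\rF_0\rJ$, and carries out an explicit splitting argument to exhibit $\rI$ as a direct summand of $\rF_0\rJ$, after which (i) kills $\rF_0\rJ\square_\rC\rD$; you stay entirely inside $\comod_{\uccC}(\rC)$, combining (i) with essentiality of $\rJ\hookrightarrow\rI$ and the vanishing criterion to get $\rI\square_\rC\rD=0$ directly, then use exactness of the $\cC$-action to pass to $\add\{\rF\rI\mid\rF\in\cC\}$. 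Your route is shorter and avoids both the diagram chase and any appeal to injective objects of $\ucC$; the paper's route yields the additional structural fact that $\rI$ is a summand of $\rF_0\rJ$, but this is not needed for the statement, so your argument is a legitimate and somewhat more economical alternative.
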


\proof
\eqref{seqzero1} 
Applying $\rJ\square_\rC -$ to the exact sequence $0\to\rD\to\rC\overset{\pi}{\to}\rJ\to 0$ of $\rC$-$\rC$-bicomodules yields and exact sequence
$$0 \longrightarrow \rJ\square_\rC\rD\longrightarrow\rJ\square_\rC\rC\overset{\id_\rJ\square\pi}{\longrightarrow}\rJ\square_\rC\rJ.$$
Now consider the diagram
$$\xymatrix{
\rJ\square_\rC\rC \ar@{^{(}->}_{\alpha}[d] \ar^{\id_\rJ\square\pi}[r]& \rJ\square_\rC\rJ\ar@{^{(}->}^{\alpha'}[d]\\
\rJ\rC \ar^{\id_\rJ\circ_0\pi}[r] \ar_{\id_\rJ\circ_0\mu_\rC-\rho_\rJ\circ_0\id_\rC}[d] & \rJ\rJ \ar^{\id_\rJ\lambda_\rJ-\rho_\rJ\circ_0\id_\rJ}[d] \\
\rJ\rC\rC \ar^{\id_{\rJ\rC}\circ_0\pi}[r]&\rJ\rC\rJ,
}$$
where $\lambda_\rJ$ is the left $\rC$-coaction map of $\rJ$.
Using the interchange law and the induced (left) $\rC$-comodule structure of $\rJ$, the lower square commutes, which yields the commutativity of the upper square.
Since there is an isomorphism $\beta:\rJ \xrightarrow{\sim}\rJ\square_\rC\rC$, we have $\alpha\circ_1\beta=\rho_\rJ$.
The induced map $\mu_\rJ\colon\rJ\to\rJ\rJ$ is precisely $(\id_\rJ\circ_0\pi)\circ_1\rho_\rJ$. 
Hence, we have two exact sequences
$$
\xymatrix{
0\ar[r]& \rJ\square_\rC\rD \ar[r]\ar@{.>}[d]  & \rJ\square_\rC\rC \ar_{\beta^{-1}}^{\sim}[d] \ar^{\id_\rJ\square\pi}[r]& \rJ\square_\rC\rJ\ar@{^{(}->}^{\alpha'}[d] \\
0\ar[r]& \ker\mu_\rJ \ar[r] & \rJ \ar^{\mu_\rJ}[r] & \rJ\rJ
}
$$
so that the right-hand square commutes.
This implies that $\rJ\square_\rC\rD\cong \ker\mu_\rJ$. The claim follows.


\eqref{seqzero2}
Realise $\rQ\in \inj_{\underline{\ccC}}(\rC)$ as a direct summand (inside $\inj_{\underline{\ccC}}(\rC)$) of $\rG \rC$, for some $1$-morphism $\rG\in \cC$, with complement $\rQ'$. Let $-\square_\rD \rD_\rC\colon \comod_{\uccC}(\rD)\hookrightarrow  \comod_{\uccC}(\rC)$ be the morphism from Lemma \ref{lemclosed} given by extending the coaction from $\rD$ to $\rC$.

Consider the exact sequence
$$0\to \rG\rD\to \rQ\oplus \rQ'\to\rG \rJ$$
in $\comod_{\uccC}(\rC)$.
We claim that if the induced morphism $\alpha\colon\rG \rD\to \rQ$ is nonzero, then $\rQ\square_\rC\rD\neq 0$. 
Indeed, as $\rG\rD$ is in the essential image of $-\square_\rD \rD_\rC$, the nonzero image $\rZ$ of $\alpha$, as a quotient of $\rG\rD$, is also in the essential image of $-\square_\rD \rD_\rC$ by Lemma \ref{lemclosed}, and isomorphic to $\rZ'\square_\rD \rD_\rC$ for some $\rZ'\in \comod_{\uccC}(\rD)$.
On the one hand, applying $-\square_{\rC}\rD$ to the monomorphism $\rZ\hookrightarrow  \rQ$ yields a monomorphism $\rZ\square_\rC\rD\hookrightarrow  \rQ\square_\rC\rD$.
On the other hand, it follows from Lemma \ref{MMMZCor7Lem8}\eqref{MMMZ7} that $Z\square_{\rC}\rD \cong Z'\square_{\rD}\rD\square_{\rC}\rD \cong Z'$ is nonzero.
Thus we obtain that $\rQ\square_{\rC}\rD$ is also nonzero, as claimed.

Therefore, if $\rQ\square_\rC\rD=0$, then $\rQ$ is not in the coimage of the first map of the exact sequence above.  This implies that $\rQ$ is isomorphic to a subobject of $\rG\rJ$, which in turn is a subobject of $\rG \rI$. Injectivity of $\rQ$ implies that it is in fact isomorphic to a direct summand of $\rG \rI$.

Let us now assume $\rD$ is coidempotent and show the converse.
Let $\rF_0$ be the injective hull of $\rJ$ in $\ucC$ and $\vartheta\colon \rJ\hookrightarrow \rF_0$ the canonical embedding.
Since the induced comultiplication on $\rJ$ is, by assumption, a monomorphism in $\ucC$ and composition in $\ucC$ is left exact, we have monomorphisms $\rJ\hookrightarrow \rJ\rJ \hookrightarrow \rF_0\rJ$ in $\comod_{\uccC}(\rC)$. 
We obtain a commutative diagram
$$\xymatrix{
\rJ \ar@{^{(}->}_{\mu_\rJ}[rr] \ar@{^{(}->}[d]^{\rho_\rJ}&&\rJ \rJ \ar@{^{(}->}^{\vartheta\circ_0 \id_\rJ}[d]\\
\rJ \rC \ar^{\id_\rJ\circ_0\pi}[urr]\ar@{^{(}->}_{\vartheta\circ_0 \id_\rC}[d]&&\rF_0\rJ \\
\rF_0\rC\ar@{->>}^{\id_{\rF_0}\circ_0 \pi}[urr]&&
}$$
in $\comod_{\uccC}(\rC)$. 
By injectivity of $\rF_0\rC$, the resulting maps $\tau=(\vartheta\circ_0 \id_\rJ)\circ_1\mu_\rJ$ and $\sigma=(\vartheta\circ_0 \id_\rC)\circ_1\rho_\rJ$ in the diagram 
$$\xymatrix{
\rJ\ar@{^{(}->}^{\tau}[rr]\ar@{^{(}->}_{\sigma}[d]&&\rF_0\rJ \ar@{-->}@/_/_{\kappa}[dll]\\
\rF_0\rC\ar@{->>}@/_/_{\id_{\rF_0}\circ_0\pi}[urr]&&
}$$
give rise to the dotted map $\kappa\colon\rF_0\rJ \to \rF_0\rC$, such that the diagram commutes both ways around. The equality $\kappa\circ_1(\id\circ_0\pi)\circ_1\sigma=\kappa\tau=\sigma$ implies that $\kappa\circ_1(\id\circ_0\pi)$ is the identity on $\rI$ as a direct summand of $\rF_0\rC$ and hence $\rI$ is a direct summand of $\rF_0\rJ$.
By part \eqref{seqzero1}, we have $\rF_0\rJ\square_{\rC}\rD=0$.
In particular, its direct summand $\rI\square_{\rC}\rD$ is also zero, and hence any $Q\in\add_{\inj_{\underline{\ccC}}(\rC)}\{\rF\rI\mid \rF\in\cC\}$ satisfies $\rQ\square_{\rC}\rD=0$.
\endproof

\begin{lemma}\label{kill-simple}
Suppose $\rD\overset{\iota}{\hookrightarrow}\rC$ is a coidempotent subcoalgebra.
Let $\rI$ be the injective hull of the cokernel of $\iota$ and $\rM$ be a simple $\rC$-comodule with injective hull $\rQ$.
Then the following are equivalent.
\begin{enumerate}[(i)]
\item\label{kill-simple1} $\rM\square_\rC\rD=0$.
\item\label{kill-simple2} $\rQ\square_\rC\rD=0$.
\item\label{kill-simple3} $\rQ\in \add\{\rF\rI\mid \rF\in\cC\}$.
\end{enumerate}
\end{lemma}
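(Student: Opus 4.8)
The plan is to establish the cycle of implications $(ii)\Rightarrow(i)\Rightarrow(ii)$ together with the equivalence $(ii)\Leftrightarrow(iii)$, the last of which is essentially a restatement of an earlier result. Since the cokernel of $\iota\colon\rD\hookrightarrow\rC$ is $\rJ=\rC/\rD$ and $\rI$ is its injective hull in $\comod_{\uccC}(\rC)$, and since $\rD$ is assumed coidempotent, the equivalence $(ii)\Leftrightarrow(iii)$ is exactly Lemma \ref{seqzero}\eqref{seqzero2} applied to the injective $\rC$-comodule $\rQ$; no further work is needed for that part.

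For $(ii)\Rightarrow(i)$ I would use that the cotensor functor $-\square_\rC\rD$ is left exact. This follows either directly from its definition as a kernel, via left exactness of horizontal composition in $\ucC$, or from the fact that it is a right adjoint, namely the right adjoint of $[\rD,-]\cong-\square_\rD\rD_\rC$ by Lemma \ref{adjlemma} (as recorded in the proof of Lemma \ref{MMMZCor7Lem8}). Applying it to the embedding $\rM\hookrightarrow\rQ$ of $\rM$ into its injective hull yields a monomorphism $\rM\square_\rC\rD\hookrightarrow\rQ\square_\rC\rD$, so $\rQ\square_\rC\rD=0$ immediately forces $\rM\square_\rC\rD=0$.

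For $(i)\Rightarrow(ii)$ I would argue by contraposition: assume $\rQ\square_\rC\rD\neq0$ and deduce $\rM\square_\rC\rD\neq0$. Write $i:=-\square_\rD\rD_\rC\colon\comod_{\uccC}(\rD)\to\comod_{\uccC}(\rC)$, which by Lemma \ref{lemclosed} is fully faithful, exact, and has subobject-closed essential image $\S$, and write $p:=-\square_\rC\rD$ for its right adjoint. By Lemma \ref{MMMZCor7Lem8}\eqref{MMMZ8} the counit $ip(\rQ)\to\rQ$ is a monomorphism, and since $p(\rQ)=\rQ\square_\rC\rD\neq0$ while $i$ is faithful (hence reflects zero objects), $\rN:=ip(\rQ)$ is a \emph{nonzero} subobject of $\rQ$ lying in $\S$. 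As $\rM$ is essential in its injective hull $\rQ$, we obtain $\rN\cap\rM\neq0$, and simplicity of $\rM$ forces $\rM\subseteq\rN$; since $\S$ is closed under subobjects, $\rM\in\S$, say $\rM\cong\rM'\square_\rD\rD_\rC$ with $\rM'\neq0$. Finally, by Lemma \ref{MMMZCor7Lem8}\eqref{MMMZ7}, $\rM\square_\rC\rD\cong\rM'\square_\rD\rD\square_\rC\rD\cong\rM'\neq0$, which completes the argument.

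The proof is mostly a matter of assembling Lemmas \ref{seqzero}, \ref{lemclosed} and \ref{MMMZCor7Lem8} with a short essential-extension argument, so there is no serious obstacle; the one point requiring a little care is the left exactness of $-\square_\rC\rD$ used in $(ii)\Rightarrow(i)$ and the monicity of the counit $ip(\rQ)\hookrightarrow\rQ$ used in $(i)\Rightarrow(ii)$, both of which are nonetheless already available from the preliminary section.
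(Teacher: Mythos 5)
Your proposal is correct and follows essentially the same route as the paper: (ii)$\Leftrightarrow$(iii) via Lemma \ref{seqzero}\eqref{seqzero2}, (ii)$\Rightarrow$(i) by left exactness of $-\square_\rC\rD$, and (i)$\Leftrightarrow$(ii) via the monic counit $\rQ\square_\rC\rD\square_\rD\rD_\rC\hookrightarrow\rQ$ from Lemma \ref{MMMZCor7Lem8}, the simple socle of $\rQ$, and full faithfulness of $-\square_\rD\rD_\rC$. Stating the last step as a contrapositive rather than a direct contradiction is only a cosmetic difference.
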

\begin{proof}
\eqref{kill-simple2}$\Leftrightarrow$\eqref{kill-simple3}: This is Lemma \ref{seqzero} \eqref{seqzero2}.

\eqref{kill-simple2}$\Rightarrow$\eqref{kill-simple1}: Clear by left exactness of $-\square_\rC\rD$.

\eqref{kill-simple1}$\Rightarrow$\eqref{kill-simple2}:
By Lemma \ref{MMMZCor7Lem8} \eqref{MMMZ8}, $\rQ\square_\rC\rD\square_\rD\rD_\rC$ is a subcomodule of $\rQ$, which has simple socle $\rM$ in the case when it is non-zero.
Since the smallest non-trivial subcomodule $\rM$ of $\rQ$ is annihilated by $-\square_\rC\rD$, it follows that $\rQ\square_\rC\rD\square_\rD\rD_\rC=0$.
But $-\square_\rD\rD_\rC$ is fully faithful, so $\rQ\square_\rC\rD=0$.
\end{proof}

\subsection{Coidempotent subcoalgebras and Serre subcategories}

In this subsection, we provide a correspondence between coidempotent subcoalgebras of a coalgebra $1$-morphism $\rC$ and Serre subcategories of $\comod_{\uccC}(\rC)$. Throughout this subsection, we let $\rD\overset{\iota}{\hookrightarrow}\rC$ be a subcoalgebra, let $\rJ,\pi_\rJ$ be defined by the short exact sequence
$$0\to\rD\overset{\iota}{\hookrightarrow}\rC\overset{\pi_\rJ}{\twoheadrightarrow}\rJ\to 0$$
and $\mu_\rJ=(\id_\rJ\circ_0\pi_\rJ)\circ_1\rho_\rJ$ the induced multiplication on $\rJ$.

\begin{lemma}\label{coidem-then-Serre}
If $\rD\overset{\iota}{\hookrightarrow}\rC$ is a coidempotent subcolagebra, then the fully faithful exact embedding $-\square_\rD\rD_\rC$ sends $\comod_{\uccC}(\rD)$ to a Serre subcategory of $\comod_{\uccC}(\rC)$.
\end{lemma}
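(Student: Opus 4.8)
The plan is to show that the essential image of $-\square_\rD\rD_\rC$ is closed under extensions in $\comod_{\uccC}(\rC)$; combined with Lemma \ref{lemclosed}, which already gives closure under subobjects and quotients, this yields the Serre property. So let
\begin{equation*}
0\to \rN\to \rM\to \rN'\to 0
\end{equation*}
be a short exact sequence in $\comod_{\uccC}(\rC)$ with $\rN$ and $\rN'$ both in the essential image of $-\square_\rD\rD_\rC$, i.e. with both coaction maps $\rho_\rN$ and $\rho_{\rN'}$ factoring through $\id_?\circ_0\iota$. By the characterisation recalled just before Lemma \ref{lemclosed}, it suffices to show that $\rho_\rM\colon\rM\to\rM\rC$ factors through $\id_\rM\circ_0\iota\colon \rM\rD\hookrightarrow\rM\rC$, equivalently that $(\id_\rM\circ_0\pi_\rJ)\circ_1\rho_\rM=0$.

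First I would form the composite $g:=(\id_\rM\circ_0\pi_\rJ)\circ_1\rho_\rM\colon \rM\to\rM\rJ$ and chase it along the sequence. Precomposing with $\rN\to\rM$: since $\rho_\rN$ factors through $\rN\rD$ and $\pi_\rJ\circ_1\iota=0$, we get that $g$ restricted to $\rN$ is zero, so $g$ descends to a map $\bar g\colon \rN'\to\rM\rJ$. Now I would push $\bar g$ forward: compose with $\id_{\rM}\circ_0\mu_\rJ$ using that $\rM\rJ\to\rM\rJ\rJ$ is where the coidempotency hypothesis bites — more precisely, I want to invoke Lemma \ref{seqzero}\eqref{seqzero1}, namely $\rJ\square_\rC\rD=0$, equivalently that $\mu_\rJ\colon \rJ\to\rJ\rJ$ is mono (Definition \ref{def:coidem}). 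The key calculation is a coassociativity/interchange-law manipulation showing $(\id_\rM\circ_0\mu_\rJ)\circ_1 g$ is computed from $\rho_\rM$ composed with $\id_\rM\circ_0\pi_\rJ$ twice, and that this factors through $\rho_{\rN'}$ (pulled up along $\rM\to\rN'$), hence through $\rN'\rD$; then $\pi_\rJ$ kills it, giving $(\id_\rM\circ_0\mu_\rJ)\circ_1 g=0$. Since $\id_\rM\circ_0\mu_\rJ$ is mono by left exactness of horizontal composition and coidempotency, we conclude $g=0$, as desired. An alternative, possibly cleaner, route is to apply the left-exact functor $-\square_\rC\rD$ to the sequence: $\rN\square_\rC\rD\cong\rN$ and $\rN'\square_\rC\rD\cong\rN'$ by Lemma \ref{MMMZCor7Lem8}\eqref{MMMZ7} (after identifying $\rN,\rN'$ with objects pushed up from $\comod_{\uccC}(\rD)$), so the five lemma forces $\rM\square_\rC\rD\cong\rM$, and then one argues that the counit of the adjunction $(-\square_\rD\rD_\rC,-\square_\rC\rD)$ — or rather the monic natural transformation of Lemma \ref{MMMZCor7Lem8}\eqref{MMMZ8} applied to $\rM$, combined with a dimension/length count via the snake lemma against $\rN,\rN'$ — is an isomorphism on $\rM$, placing $\rM$ in the essential image.

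The main obstacle I anticipate is making the middle-term argument genuinely work: $-\square_\rC\rD$ is only left exact, so applying it to the sequence gives $0\to\rN\square_\rC\rD\to\rM\square_\rC\rD\to\rN'\square_\rC\rD$ but not right exactness, so one cannot immediately conclude $\rM\square_\rC\rD\cong\rM$ by the five lemma alone — one must separately check that $\rM\square_\rC\rD\to\rN'\square_\rC\rD\cong\rN'$ is epi, e.g. by lifting along the injective hull of $\rN'$ or by using that $\rM\to\rN'$ splits after applying a suitable exact functor. For this reason I would favour the first, direct route via the coaction map: it keeps everything inside $\comod_{\uccC}(\rC)$ and reduces to a single interchange-law identity plus the monomorphicity of $\id_\rM\circ_0\mu_\rJ$, which follows from coidempotency exactly as in the proof of Lemma \ref{seqzero}\eqref{seqzero1}. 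The remaining routine points — that the map $\rN'\to\mathrm{(something)}\rD$ thus produced is again a $\rD$-coaction, and that it is compatible with $\rho_\rM^\rD$ — are verified by the same monomorphism-cancellation arguments already used repeatedly in the proof of Lemma \ref{lemclosed}.
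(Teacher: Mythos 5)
Your direct route is correct and is essentially the paper's own argument: reduce to showing $\sigma_\rM:=(\id_\rM\circ_0\pi_\rJ)\circ_1\rho_\rM=0$, use coassociativity/interchange to get $(\sigma_\rM\circ_0\id_\rJ)\circ_1\sigma_\rM=(\id_\rM\circ_0\mu_\rJ)\circ_1\sigma_\rM$, show this vanishes using $\sigma_\rN=0$ and $\sigma_{\rN'}=0$, and conclude from monicity of $\id_\rM\circ_0\mu_\rJ$ (coidempotency plus left exactness of horizontal composition). The only cosmetic difference is that the paper factors $\sigma_\rM$ through the kernel term $\rN\rJ$ (using $\sigma_{\rN'}=0$) and then kills it via $\sigma_\rN=0$, whereas you descend through the cokernel $\rN'$ (using $\sigma_\rN=0$) and kill via $\sigma_{\rN'}=0$; your self-flagged five-lemma alternative is indeed the weaker option and is rightly set aside.
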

\proof
By Lemma \ref{lemclosed}, it remains to show closure under extensions.

For any $M\in \comod_{\uccC}(\rC)$, we denote by $\sigma_M$ the composition $(\id_M\circ_0\pi_\rJ)\circ_1\rho_M$, where $\rho_M$ is the coaction map.
Then $M$ being in the essential image of $\comod_{\uccC}(\rC)$ is equivalent to $\sigma_M=0$.
Let $0\to X\xrightarrow{f} Y\xrightarrow{g}Z \to 0$ be a short exact sequence of right $\rC$-comodule such that $X,Z$ is in the essential image of $-\square_\rD\rD_\rC$.
Our aim is to show that $\sigma_Y$ is zero.

Since horizontal composition is left exact, we have commutative diagram 
$$
\xymatrix@C=45pt{
0\ar[r] & X\ar[r]^{f}\ar[d]_{\rho_X} & Y\ar[d]_{\rho_Y} \ar[r]^{g} & Z \ar[d]^{\rho_Z} \ar[r] & 0\\
0\ar[r] & X\rC \ar[r]_{f\circ_0\id_\rC} & Y\rC \ar[r]_{g\circ_0\id_\rC} & Z\rC 
}
$$
in $\ucC$ with exact rows.

This induces a commutative diagram where all $\rC$'s and $\rho$'s above are replaced by $\rJ$ and $\sigma$ respectively.
Hence we have $(g\circ_0\id_\rJ)\circ_1\sigma_Y = \sigma_Z\circ_1g = 0$, which means that the image of $\sigma_Y$ is in the kernel of $g\circ_0\id_\rJ$.
Exactness of the top row of the diagram implies that there is $\phi:Y\to X\rJ$ so that $(f\circ_0\id_\rJ)\circ_1\phi = \sigma_Y$.
Thus, we have 
\begin{equation*}\begin{split}
(\sigma_Y\circ_0\id_\rJ)\circ_1 \sigma_Y& = (\sigma_Y\circ_0\id_\rJ)\circ_1(f\circ_0\id_\rJ)\circ_1\phi \\
&= ((\sigma_Y\circ_1 f)\circ_0\id_\rJ)\circ_1\phi\\
&= (((f\circ_0\id_\rJ)\circ_1\sigma_X)\circ_0 \id_\rJ) \circ_1\phi \\
&=0.
 \end{split}\end{equation*}

On the other hand, $(Y\xrightarrow{\rho_Y}Y\rC\xrightarrow{\rho_Y\circ_0\id_\rC}Y\rC\rC )=(Y\xrightarrow{\rho_Y}Y\rC\xrightarrow{\id_Y\circ_0\mu_\rC}Y\rC\rC )$ and this induces $(\sigma_Y\circ_0\id_\rJ)\circ_1 \sigma_Y = (\id_Y\circ_0\mu_\rJ)\circ_1\sigma_Y$. Combining this with the argument in the previous paragraph, we see that $(\id_Y\circ_0\mu_\rJ)\circ_1\sigma_Y=0$.  Since $\rD$ is coidempotent (i.e. $\mu_\rJ$ is mono) and horizontal left composition with $\id_Y$ preserves monicity, we obtain that $\id_Y\circ_0\mu_\rJ$ is mono, which implies $\sigma_Y=0$ as required.
\endproof

\begin{lemma}\label{I/I^2 is A/I-module}
Let $\rK$ be the kernel of $\mu_\rJ$. 
Then there is a short exact sequence 
$$
0 \to \rD \to \mu_{\rC}^{-1}(\rC\rD+\rD\rC) \to \rK \to 0
$$
of right $\rC$-comodules, and $\rK$ is also a $\rD$-comodule.
\end{lemma}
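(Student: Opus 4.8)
The plan is to take $\rE:=\mu_\rC^{-1}(\rC\rD+\rD\rC)$, viewed as a subobject of $\rC$, show it is a right $\rC$-subcomodule containing $\rD$, and then identify $\rE/\rD$ with $\rK$ via the projection $\pi_\rJ$. First I would record that $\rC\rD=\ker(\id_\rC\circ_0\pi_\rJ)$ and $\rD\rC=\ker(\pi_\rJ\circ_0\id_\rC)$, by applying the left exact functors $\rC\circ_0-$ and $-\circ_0\rC$ to $0\to\rD\xrightarrow{\iota}\rC\xrightarrow{\pi_\rJ}\rJ\to0$. Hence $\rC\rD$, $\rD\rC$ and $\rC\rD+\rD\rC$ are $\rC$-$\rC$-subbicomodules of $\rC\rC$ (with the bicomodule structure given by $\mu_\rC$ on the two outer factors), and since $\mu_\rC$ is a bicomodule morphism by coassociativity, $\rE$ is a subbicomodule of $\rC$, in particular a right $\rC$-subcomodule. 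Moreover $\rD\subseteq\rE$: by the subcoalgebra axiom $\mu_\rC\circ_1\iota=(\iota\circ_0\iota)\circ_1\mu_\rD$, and $\iota\circ_0\iota=(\iota\circ_0\id_\rC)\circ_1(\id_\rD\circ_0\iota)$ factors through $\im(\iota\circ_0\id_\rC)=\rD\rC\subseteq\rC\rD+\rD\rC$, so $\iota$ factors (necessarily monically) through $\rE$. As $\rD=\ker\pi_\rJ\subseteq\rE$, restricting $\pi_\rJ$ gives a short exact sequence of right $\rC$-comodules $0\to\rD\to\rE\overset{\bar\pi}{\to}\pi_\rJ(\rE)\to0$, with $\pi_\rJ(\rE)$ a subcomodule of $\rJ$. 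Using $\mu_\rJ\circ_1\pi_\rJ=(\pi_\rJ\circ_0\pi_\rJ)\circ_1\mu_\rC$ and that $\pi_\rJ\circ_0\pi_\rJ$ annihilates $\rC\rD$ (it factors through $\id_\rC\circ_0\pi_\rJ$) and $\rD\rC$ (it factors through $\pi_\rJ\circ_0\id_\rC$), hence their sum, we get $\mu_\rJ\circ_1\pi_\rJ|_\rE=0$, so $\pi_\rJ(\rE)\subseteq\ker\mu_\rJ=\rK$. It thus remains to prove the reverse inclusion.

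The inclusion $\rK\subseteq\pi_\rJ(\rE)$ is equivalent to $\pi_\rJ^{-1}(\rK)\subseteq\rE$, and since $\pi_\rJ^{-1}(\rK)=\pi_\rJ^{-1}(\ker\mu_\rJ)=\mu_\rC^{-1}(\ker(\pi_\rJ\circ_0\pi_\rJ))$ while $\rE=\mu_\rC^{-1}(\rC\rD+\rD\rC)$, this reduces, using that $\mu_\rC$ is monic, to the containment
$$\im\mu_\rC\cap\ker(\pi_\rJ\circ_0\pi_\rJ)\subseteq\rC\rD+\rD\rC .$$
\textbf{This is the main obstacle.} Over a field the even stronger identity $\ker(\pi_\rJ\circ_0\pi_\rJ)=\rC\rD+\rD\rC$ is immediate from right exactness of the tensor product, but in $\ucC$ horizontal composition is only left exact, so the inclusion $\ker(\pi_\rJ\circ_0\pi_\rJ)\supseteq\rC\rD+\rD\rC$ (which holds as noted above) must be promoted to an equality by a direct diagram chase — e.g.\ writing $\ker(\pi_\rJ\circ_0\pi_\rJ)$ as the preimage of $\rD\rJ$ along $\pi_\rJ\circ_0\id_\rC$ and of $\rJ\rD$ along $\id_\rC\circ_0\pi_\rJ$ and comparing the images of $\rC\rD$ and $\rD\rC$, or exploiting coassociativity of $\mu_\rC$ to split an element of $\im\mu_\rC\cap\ker(\pi_\rJ\circ_0\pi_\rJ)$. (This is, in any case, exactly the identity needed for the two formulations of coidempotence in Definition~\ref{def:coidem} to be equivalent.) Granting it, we obtain $\pi_\rJ(\rE)=\rK$ and hence the desired short exact sequence $0\to\rD\to\mu_\rC^{-1}(\rC\rD+\rD\rC)\to\rK\to0$ of right $\rC$-comodules.

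For the final assertion, from the proof of Lemma~\ref{seqzero}\eqref{seqzero1} one has $\rK=\ker\mu_\rJ\cong\rJ\square_\rC\rD$, and $\rD$ is a $\rC$-$\rD$-bicomodule (left $\rC$-coaction $(\iota\circ_0\id_\rD)\circ_1\mu_\rD$, right $\rD$-coaction $\mu_\rD$), so the cotensor product $\rJ\square_\rC\rD$ is naturally a right $\rD$-comodule, exactly as in Lemma~\ref{adjlemma}\eqref{adjlemma1}. Alternatively, and self-containedly: a direct check shows $\mu_\rJ\colon\rJ\to\rJ\rJ$ is a morphism of right $\rC$-comodules (the verification reduces to the coaction axiom for $\rJ$), so $\rK$ is a right $\rC$-subcomodule of $\rJ$; then its restricted coaction $\rho_\rK\colon\rK\to\rK\rC$ satisfies $(\id_\rK\circ_0\pi_\rJ)\circ_1\rho_\rK=0$, being a restriction of $\mu_\rJ$, hence $\rho_\rK$ factors through $\ker(\id_\rK\circ_0\pi_\rJ)=\rK\rD$, which by the discussion preceding Lemma~\ref{lemclosed} means precisely that $\rK$ carries a $\rD$-comodule structure.
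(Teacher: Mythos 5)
Your argument has a genuine gap at exactly the point you flag yourself. You prove $\pi_\rJ(\rE)\subseteq\rK$ for $\rE=\mu_\rC^{-1}(\rC\rD+\rD\rC)$ (taken literally as the preimage of the subobject $\rC\rD+\rD\rC\subseteq\rC\rC$), but the reverse inclusion $\rK\subseteq\pi_\rJ(\rE)$ — equivalently the containment $\im\mu_\rC\cap\ker(\pi_\rJ\circ_0\pi_\rJ)\subseteq\rC\rD+\rD\rC$ — is only "granted", not proved. This is not a routine verification you can defer: the classical proof of $\ker(\pi_\rJ\circ_0\pi_\rJ)=\rC\rD+\rD\rC$ is exactly the right-exactness of the tensor product, which is unavailable in $\ucC$ (horizontal composition is only left exact), and the substitutes you sketch ("comparing the images of $\rC\rD$ and $\rD\rC$", "exploiting coassociativity") do not amount to an argument; factoring a map $T\to\rC\rC$ through a sum of two subobjects has no categorical analogue of the elementwise splitting one uses over a field. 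Since the surjection onto $\rK$ is the entire content of the short exact sequence, the proposal as written does not prove the lemma.

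The paper avoids this issue altogether: it reads $\mu_\rC^{-1}(\rC\rD+\rD\rC)$ as $\ker\bigl((\pi_\rJ\circ_0\pi_\rJ)\circ_1\mu_\rC\bigr)$, observes via the interchange law and the fact that $\pi_\rJ$ is a right comodule map that $(\pi_\rJ\circ_0\pi_\rJ)\circ_1\mu_\rC=\mu_\rJ\circ_1\pi_\rJ$, and then applies the snake lemma to the commutative square formed by $\id_\rC$, $\pi_\rJ$ and $\mu_\rJ$; the sequence $0\to\rD\to\ker(\mu_\rJ\circ_1\pi_\rJ)\to\rK\to 0$ drops out immediately, with no need for any statement about $\ker(\pi_\rJ\circ_0\pi_\rJ)$ as a subobject of $\rC\rC$. (This reading is also the one under which the two formulations in Definition~\ref{def:coidem} are equivalent and under which the lemma is later used in Proposition~\ref{NT-corresp}, so if you insist on the literal preimage you must in any case reconcile the two interpretations — which is again your missing containment.) Your final paragraph on the right $\rD$-comodule structure of $\rK$ is correct, and your "self-contained" alternative is essentially the paper's own argument: restrict $\rho_\rJ$ to $\rK$, use $\mu_\rJ\circ_1\iota_\rK=0$ and monicity of $\iota_\rK\circ_0\id_\rJ$ to conclude that the coaction factors through $\rK\rD$.
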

\proof
The right $\rC$-comodule map $\pi_\rJ:\rC\twoheadrightarrow\rJ$ induces a commutative diagram
$$
\xymatrix@C=50pt{
\rC \ar[r]^{\mu_\rC}\ar[d]_{\pi_\rJ} & \rC\rC \ar[d]^{\pi_\rJ\circ_0\id_\rC} \ar[r]^{\id_\rC\circ_0\pi_\rJ} & \rC\rJ \ar[d]^{\pi_\rJ\circ_0\id_\rJ}\\
\rJ\ar[r]_{\rho_\rJ} &\rJ\rC \ar[r]_{\id_\rJ\circ_0\pi_\rJ}& \rJ\rJ ,
}
$$
in $\comod_{\uccC}(\rC)$.  Since $\mu_\rJ:=(\id_\rJ\circ_0\pi_\rJ)\circ_1\rho_\rJ$, $\mu_{\rC}^{-1}(\rC\rD+\rD\rC)=\ker((\pi_\rJ\circ_0\pi_\rJ)\circ_1\mu_\rC)$ coincides with $\ker(\mu_\rJ\circ_1\pi_\rJ)$.

We have a commutative diagram
$$
\xymatrix{
 & 0\ar[r]\ar[d] & \rC\ar[r]^{\id_\rC}\ar[d]^{\pi_\rJ} & \rC\ar[r]\ar[d]^{(\pi_\rJ\circ_0\pi_\rJ)\circ_1\mu_\rC} & 0\\
0 \ar[r] & \rK \ar[r]_{\iota_\rK} & \rJ \ar[r]_{\mu_\rJ} & \rJ\rJ & 
}
$$
where both rows are exact.
Now the snake lemma provides the required short exact sequence $0\to \rD\to  \mu_{\rC}^{-1}(\rC\rD+\rD\rC)\to \rK\to 0$ of right $\rC$-comodules.

Consider the following commutative diagram 
$$
\xymatrix@C=50pt{
 \rK \ar[r]^{\iota_\rK}\ar[d]_{\rho_\rK} & \rJ \ar[d]^{\rho_\rJ} \\
 \rK\rC \ar[r]^{\iota_\rK\circ_0\id_\rC}\ar[d]_{\id_\rK\circ_0\pi_\rJ}& \rJ\rC \ar[d]^{\id_\rJ\circ_0\pi_\rJ} \\
 \rK\rJ \ar[r]_{\iota_\rK\circ_0\id_\rJ} & \rJ\rJ
}
$$
in $\comod_{\uccC}(\rC)$.
This yields $$(\iota_\rK\circ_0\id_\rJ)\circ_1(\id_\rK\circ_0\pi_\rJ)\circ_1\rho_\rK = (\id_\rJ\circ_0\pi_\rJ)\circ_1\rho_\rJ\circ_1\iota_\rK = \mu_\rJ\circ_1\iota_\rK = 0.$$
In particular, since $\iota_\rK\circ_0\id_\rJ$ is mono (as, again, horizontal composition inherits monicity of $\iota_\rK$), we deduce that $(\id_\rK\circ_0\pi_\rJ)\circ_1\rho_\rK=0$, as required to show that $\rK$ is indeed a right $\rD$-comodule.
\endproof

\begin{remark}
All maps in the above proof are in fact morphisms of $\rC$-$\rC$-bicomodules, so the exact sequence in the statement of the lemma can be interpreted as an exact sequence of $\rC$-$\rC$-bicomodules. A similar proof shows that $\rK$ is also a $\rD$-$\rD$ bicomodule.
\end{remark}

\begin{proposition}\label{NT-corresp}
Suppose $\rD\overset{\iota}{\hookrightarrow}\rC$ is a subcoalgebra.
Then the fully faithful embedding $-\square_\rD\rD_\rC$ sends $\comod_{\uccC}(\rD)$ to a Serre subcategory of $\comod_{\uccC}(\rC)$ if, and only if, $\rD$ is coidempotent. 
\end{proposition}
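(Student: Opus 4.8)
The ``if'' direction is Lemma \ref{coidem-then-Serre}, so the plan below addresses only the converse. Write $\S$ for the essential image of $-\square_\rD\rD_\rC$ and suppose it is a Serre subcategory of $\comod_{\uccC}(\rC)$; the only consequence I shall use is that $\S$ is closed under extensions. Recall that a right $\rC$-comodule $\rN$ lies in $\S$ precisely when its coaction factors through $\id_\rN\circ_0\iota\colon\rN\rD\to\rN\rC$. The plan is to feed the short exact sequence
$$0\to\rD\to\mu_\rC^{-1}(\rC\rD+\rD\rC)\to\rK\to 0$$
of right $\rC$-comodules from Lemma \ref{I/I^2 is A/I-module} (with $\rK=\ker\mu_\rJ$) into this extension-closure to deduce $\rM:=\mu_\rC^{-1}(\rC\rD+\rD\rC)\in\S$, and then to exploit the extra fact that $\rM$ is a \emph{subcomodule} of $\rC$ to force $\rM=\rD$, which is exactly coidempotency.

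For the first step I would note that $\rD=\rD_\rC$ lies in $\S$ tautologically, while $\rK$ lies in $\S$ because the proof of Lemma \ref{I/I^2 is A/I-module} shows $(\id_\rK\circ_0\pi_\rJ)\circ_1\rho_\rK=0$, so the $\rC$-coaction $\rho_\rK$ factors through $\ker(\id_\rK\circ_0\pi_\rJ)=\rK\rD$ by left exactness of horizontal composition. Closure of $\S$ under extensions then gives $\rM\in\S$ at once.

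The crux is the last step. As noted in the proof of Lemma \ref{I/I^2 is A/I-module}, $\rM$ is the kernel of the $\rC$-comodule morphism $(\pi_\rJ\circ_0\pi_\rJ)\circ_1\mu_\rC\colon\rC\to\rJ\rJ$, hence a subcomodule of $\rC$; let $m\colon\rM\hookrightarrow\rC$ be the inclusion, so $(m\circ_0\id_\rC)\circ_1\rho_\rM=\mu_\rC\circ_1 m$. From $\rM\in\S$ we obtain $\rho_\rM=(\id_\rM\circ_0\iota)\circ_1\tau$ for some $\tau\colon\rM\to\rM\rD$, whence $\mu_\rC\circ_1 m=(m\circ_0\iota)\circ_1\tau$. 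Postcomposing with $\varepsilon_\rC\circ_0\id_\rC$, the counit axiom collapses the left-hand side to $m$, while the interchange law rewrites the right-hand side as $\iota$ precomposed with a morphism $\rM\to\rD$; thus $m$ factors through $\iota$, i.e. $\rM\subseteq\rD$ inside $\rC$. Since $\rD\subseteq\rM$ always (the composite $\rD\to\rM\to\rC$ of the displayed sequence being $\iota$), we conclude $\rM=\rD$.

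I expect this last manipulation to be the only real obstacle: it is the $2$-categorical transcription of the elementary fact that a subcomodule of $\rC$ whose coaction lands in the ``$\rD$-part'' $\rC\rD$ is already contained in $\rD$, and although purely formal it requires some care with $\circ_0$, $\circ_1$, the interchange law and the counit (plus the minor check that $\rM$ is genuinely a $\rC$-subcomodule). Everything else is assembling results already in hand.
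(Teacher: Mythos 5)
Your proof is correct and follows essentially the same route as the paper: both deduce from the short exact sequence of Lemma \ref{I/I^2 is A/I-module} together with extension-closure of the Serre subcategory that $\rD_2:=\mu_\rC^{-1}(\rC\rD+\rD\rC)$ lies in the essential image of $-\square_\rD\rD_\rC$, and then identify $\rD_2$ with $\rD$. The only (minor) difference is the last step: the paper invokes that $\rD_\rC\cong\rC\square_\rC\rD\square_\rD\rD_\rC$ is the maximal subobject of $\rC$ in that essential image, whereas you check directly, via the counit axiom and the interchange law, that the inclusion $\rD_2\hookrightarrow\rC$ factors through $\iota$ — a computation that does go through as you describe.
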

\begin{proof}
If $\rD$ is coidempotent, we have already shown in Lemma \ref{coidem-then-Serre} that $\comod_{\uccC}(\rD)$ embeds as a Serre subcategory.  It remains to show the converse.

Recall from Lemma \ref{I/I^2 is A/I-module} that we have a short exact sequence of right $\rC$-comodules
$$
0\to \rD \to \rD_2 \to \rK \to 0,
$$
with $\rD_2=\mu_{\rC}^{-1}(\rC\rD+\rD\rC)$. Furthermore, $\rD$ and $\rK$ are both right $\rD$-comodules, meaning their $\rC$-coaction map factors through $\id\circ_0\iota$, that is, $\rD, \rK$ are in the essential image of $-\square_\rD\rD_\rC$.

Since a Serre subcategory is extension-closed, we obtain that $\rD_2$ is in the essential image of $-\square_\rD\rD_\rC$.
Note that $\rC\square_\rC\rD\square_\rD\rD_\rC=\rD_\rC $ is the maximal subobject of $\rC$ that belongs to the essential image of $-\square_\rD\rD_\rC$.
However, $\rD_2$ is a subobject of $\rC$ (the cokernel being the image of $\mu_\rJ$), so we deduce that $\rD_2\cong \rD$, i.e. $\rD$ is coidempotent.
\end{proof}

\subsection{Coidempotent subcoalgebras and recollements}

We have now shown $\cC$-stable Serre subcategories of $\comod_{\uccC}(\rC)$ can be associated to a coidempotent subcoalgebra of $\rD$.
It is natural to ask what the quotient $\comod_{\uccC}(\rC)/\comod_{\uccC}(\rD)$ is, or how the results in the previous subsection fit into the framework of recollements.

\begin{lemma}\label{ker-comod}
Let $\rI$ be an injective $\rC$-comodule.
The following hold.
\begin{enumerate}[$($i$)$]
\item\label{ker-comod0} Let $\rM$ be a simple $\rC$-comodule with injective hull $\rQ$, then $[\rI,\rM]=0$ if and only if $\rQ$ is not in $\add\{\rF\rI\mid\rF\in\cC\}$.

\item\label{ker-comod1} The full subcategory of $\comod_{\uccC}(\rC)$ given by the $\rC$-comodules $\rM$ with $[\rI,\rM]=0$ is equivalent to $\comod_{\uccC}(\rD)$ for some coidempotent subcoalgebra $\rD$ of $\rC$.
\item\label{ker-comod2} Let $\rD$ be the subcoalgebra of $\rC$ given in \eqref{ker-comod1}, and $\rM$ a simple $\rD$-comodule.
Then $\rM\square_{\rD}\rD_\rC$ is a simple $\rC$-comodule whose injective hull is not in $\add\{\rF\rI\mid\rF\in\cC\}\subset \inj_{\uccC}(\rC)$.\end{enumerate}
\end{lemma}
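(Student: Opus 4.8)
The plan is to prove part \eqref{ker-comod0} first and then deduce parts \eqref{ker-comod1} and \eqref{ker-comod2} from it, using the lemmas already available.

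For part \eqref{ker-comod0}, I would translate the vanishing of $[\rI,\rM]$ into a condition on ordinary $\Hom$-spaces in $\comod_{\uccC}(\rC)$. The defining adjunction of the internal hom gives $\Hom_{\ucC}([\rI,\rM],\rF)\cong\Hom_{\comod_{\uccC}(\rC)}(\rM,\rF\rI)$ for every $\rF\in\ucC$. Since every $1$-morphism of $\ucC$ arises as the kernel of a $2$-morphism between $1$-morphisms of $\cC$, a nonzero object of $\ucC$ always admits a nonzero morphism into an object of $\cC$ (via its kernel inclusion); hence $[\rI,\rM]=0$ if and only if $\Hom_{\comod_{\uccC}(\rC)}(\rM,\rF\rI)=0$ for all $\rF\in\cC$. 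Now each $\rF\rI$ is again injective in $\comod_{\uccC}(\rC)$, and $\rM$, being simple, is essential in its injective hull $\rQ$. Thus a nonzero morphism $\rM\to\rF\rI$ is monic and, by injectivity of $\rF\rI$, extends to a monomorphism $\rQ\hookrightarrow\rF\rI$, which splits since $\rQ$ is injective; conversely, if $\rQ$ is a direct summand of some $\rF\rI$, then $\rM\hookrightarrow\rQ\hookrightarrow\rF\rI$ is nonzero. Therefore $[\rI,\rM]\neq0$ precisely when $\rQ$ is a direct summand of some $\rF\rI$, i.e. when $\rQ\in\add\{\rF\rI\mid\rF\in\cC\}$, which is part \eqref{ker-comod0}.

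For part \eqref{ker-comod1}, since $\rI$ is an injective $\rC$-comodule, Lemma \ref{12.8} shows that $[\rI,-]$ is exact, so the full subcategory $\S$ of $\rC$-comodules $\rM$ with $[\rI,\rM]=0$ is the kernel of an exact functor, hence closed under subobjects, quotients and extensions; it is moreover $\cC$-stable because $[\rI,\rF\rM]\cong\rF[\rI,\rM]$. Applying Lemma \ref{realise-subcat} to $\S$ then yields a subcoalgebra $\rD$ of $\rC$ together with an equivalence between $\comod_{\uccC}(\rD)$ and $\S$ realised by $-\square_\rD\rD_\rC$; since the image of $-\square_\rD\rD_\rC$ is the Serre subcategory $\S$, Proposition \ref{NT-corresp} forces $\rD$ to be coidempotent. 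For part \eqref{ker-comod2}, let $\rM$ be a simple $\rD$-comodule for this $\rD$. Because $-\square_\rD\rD_\rC$ is fully faithful, exact and has image closed under subobjects (Lemma \ref{lemclosed}), it sends simple objects to simple objects: a nonzero subobject of $\rM\square_\rD\rD_\rC$ lies in the image and is therefore induced by a nonzero, necessarily monic, subobject of $\rM$, which must be $\rM$ itself. Finally $\rM\square_\rD\rD_\rC$ lies in the essential image $\S$ of $-\square_\rD\rD_\rC$, so $[\rI,\rM\square_\rD\rD_\rC]=0$, and part \eqref{ker-comod0} gives that its injective hull is not in $\add\{\rF\rI\mid\rF\in\cC\}$.

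I expect the only delicate step to be the first reduction in part \eqref{ker-comod0}: one must be sure that $[\rI,-]$ is the internal-hom functor to $\ucC$ from the realisation set-up, so that both its defining adjunction and the exactness criterion of Lemma \ref{12.8} are available, and one must justify restricting the vanishing test from all $\rF\in\ucC$ to $\rF\in\cC$. The remaining work — the injective-hull bookkeeping in \eqref{ker-comod0}, the Serre and $\cC$-stability checks in \eqref{ker-comod1}, and the simplicity argument in \eqref{ker-comod2} — is routine given the results already in place.
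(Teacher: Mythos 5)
Your proposal is correct and follows essentially the same route as the paper: part (i) via the internal-hom adjunction plus the socle/injective-hull argument, part (ii) by realising the kernel of the exact functor $[\rI,-]$ through Lemma \ref{realise-subcat} and invoking Proposition \ref{NT-corresp} for coidempotence, and part (iii) from the Serre-subcategory property together with part (i). You merely spell out a few steps the paper leaves implicit (restricting the vanishing test from $\ucC$ to $\cC$, the splitting of $\rQ\hookrightarrow\rF\rI$, and preservation of simples), all of which are fine.
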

\begin{proof}
\eqref{ker-comod0}: By the defining property of internal homs, $[\rI,\rM]=0$ is equivalent to $\Hom_{\comod_{\uccC}(\rC)}(\rM,\rF\rI)\cong\Hom_{\uccC}([\rI,\rM],\rF)=0$ for all $\rF\in \ucC$.
This is the same as saying that $\rM$ is not in the socle of any of the object in $\add\{\rF\rI\mid \rF\in\cC\}$.

\eqref{ker-comod1}: Let $\rA$ be the coalgebra $1$-morphism given by $[\rI,\rI]$.  Then  $[\rI,-]:\comod_{\uccC}(\rC)\to\comod_{\uccC}(\rA)$ is exact by Lemma \ref{12.8}.  The full subcategory in the claim is then the kernel of an exact functor, hence a Serre subcategory.    This subcategory is clearly $\cC$-stable as $[\rI,-]$ is a morphism of 2-representations.
Now it follows from Lemma \ref{realise-subcat} that this category is equivalent to $\comod_{\uccC}(\rD)$ for some subcoalgebra $\rD$ of $\rC$, and $\rD$ being coidempotent follows from Lemma \ref{NT-corresp}.

\eqref{ker-comod2}: Since $\comod_{\uccC}(\rD)$ embeds (via $-\square_\rD\rD_\rC$) as a Serre subcategory of $\comod_{\uccC}(\rC)$, $\rM\square_{\rD}\rD_\rC$ is a simple $\rC$-comodule.  By the defining property of this Serre subcategory, $[\rI, \rM\square_\rD\rD_\rC]=0$, and the claim follows from \eqref{ker-comod0}.
\end{proof}

For a subcoalgebra $\rD$ of $\rC$, Lemma \ref{adjlemma} tells us that there is an adjoint triple
$$([{}_\rD\rD_\rC,-],-\square_\rD\rD_\rC \cong [{}_\rC\rD_\rD,-],-\square_\rC\rD)$$ between the comodule categories of these two coalgebras.
It follows from Lemma \ref{lemclosed} that $-\square_\rD\rD_\rC$ is fully faithful.

On the other hand, if we pick an injective $\rC$-comodule $\rI$ and let $\rA$ to be the coalgebra $1$-morphism given by $[\rI,\rI]$, then we obtain another adjoint triple
$$([[\rI,\rC],-],-\square_\rC [\rI,\rC]\cong [\rI,-],-\square_\rA\rI)$$
between the comodule categories of $\rC$ and $\rA$.
Note that the middle isomorphisms follow from Lemma \ref{12.8}. 
Moreover, $-\square_\rA\rI$ is fully faithful; one can see this by showing $[\rI,-]\circ (-\square_{\rA}\rI)$ is naturally isomorphic to the identity functor on $\comod_{\uccC}(\rA)$.
Indeed, as $[\rI,-]\cong -\square_{\rC}[\rI,\rC]$, the functor is naturally isomorphic to $-\square_{\rA}\rI\square_{\rC}[\rI,\rC] \cong -\square_{\rA}[\rI,\rI]=-\square_{\rA}\rA \cong \mathrm{Id}_{\comod_{\uccC}(\rA)}$, where the first isomorphism uses Lemma \ref{Iinfromleft}.

Similarly, we show that $[\rI,[[\rI,\rC],-]]\cong \mathrm{Id}_{\comod_{\uccC}(\rA)}$ to demonstrate that $[[\rI,\rC],-]$ is fully faithful.  To this end, we compute, for all $\rM,\rM'\in \comod_{\uccC}(\rA)$, that
 \begin{equation*}\begin{split}
 \Hom_{\comod_{\uccC}(\rA)}([\rI,[[\rI,\rC],\rM]], \rM')&\cong \Hom_{\comod_{\uccC}(\rC)}([[\rI,\rC],\rM]], \rM'\square_\rA \rI)\\
 &\cong \Hom_{\comod_{\uccC}(\rA)}(\rM, \rM'\square_\rA \rI\square_\rC[\rI,\rC])\\
 &\cong \Hom_{\comod_{\uccC}(\rA)}(\rM, \rM'),\\
 \end{split}\end{equation*}
where the last isomorphism uses the same argument as in the previous paragraph.

Suppose $\rC$ and either one of $\rD$ or $\rI$ is given. We would like to understand when the two adjoint triples above defines a recollement
\begin{equation}\label{recollement}
\xymatrix{  \comod_{\uccC}(\rD)\ar^{-\square_\rD\rD_\rC}[rr]&& \comod_{\uccC}(\rC)\ar^{[\rI,-]}[rr]\ar@<1ex>@/^/^{-\square_\rC\rD}[ll]\ar@<-2ex>@/_/_{[\rD,-]}[ll]&& \comod_{\uccC}(\rA).\ar@<1ex>@/^/^{-\square_\rA\rI}[ll]\ar@<-2ex>@/_/_{[[\rI,\rC],-]}[ll]
}\end{equation}
of comodule categories. In other words, we ask under what conditions on $\rD$ and $\rI$,  $\comod_{\uccC}(\rD)$ embeds via $-\square_\rD\rD_\rC$ as a Serre subcategory and coincides with the kernel (category) of the exact functor $[\rI,-]$.

\begin{proposition}\label{recollcondition}
Suppose $\rD\overset{\iota}{\hookrightarrow} \rC$ is a coidempotent subcoalgebra, and $\rI$ the injective hull in $\comod_{\uccC}(\rC)$ of the cokernel of $\iota$.
Then we have a recollement of the form \eqref{recollement}.
\end{proposition}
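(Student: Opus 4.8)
The plan is to verify the two defining conditions of a recollement that are not yet established, namely that $\comod_{\uccC}(\rD)$, embedded via $-\square_\rD\rD_\rC$, is precisely the kernel of the exact functor $[\rI,-]$, and that this embedding realises a Serre subcategory whose quotient is $\comod_{\uccC}(\rA)$. The fully faithfulness of $-\square_\rD\rD_\rC$, of $-\square_\rA\rI$, and of $[[\rI,\rC],-]$, the adjointness of the two triples, and the exactness of $[\rI,-]$ have all been arranged in the discussion preceding the statement, so those can be quoted directly.

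First I would show that the kernel of $[\rI,-]$ coincides with the essential image of $-\square_\rD\rD_\rC$. By Lemma \ref{ker-comod}\eqref{ker-comod1}, the kernel of $[\rI,-]$ is equivalent to $\comod_{\uccC}(\rD')$ for \emph{some} coidempotent subcoalgebra $\rD'$ of $\rC$, so the task is to identify $\rD'$ with the given $\rD$. Using the bijection of Proposition \ref{cat-coalg-bij} between $\cC$-stable subobject-closed quotient-closed full subcategories of $\comod_{\uccC}(\rC)$ and subcoalgebras of $\rC$ up to isomorphism, it suffices to check that the essential image of $-\square_\rD\rD_\rC$ equals the kernel of $[\rI,-]$ as subcategories of $\comod_{\uccC}(\rC)$. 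For the inclusion ``$\subseteq$'': by Lemma \ref{seqzero}\eqref{seqzero1} and the hypothesis that $\rD$ is coidempotent we have $\rJ\square_\rC\rD=0$, and since $\rI$ is the injective hull of $\rJ=\rC/\rD$ in $\comod_{\uccC}(\rC)$, Lemma \ref{12.8} together with Lemma \ref{Iinfromleft} gives $[\rI,\rM]\cong\rI\square_\rC[\rM,\rC]$, so I must see $[\rI,\rM]=0$ for $\rM$ in the image of $-\square_\rD\rD_\rC$; equivalently, by Lemma \ref{ker-comod}\eqref{ker-comod0}, the injective hull of any simple subquotient of such $\rM$ is not in $\add\{\rF\rI\mid\rF\in\cC\}$. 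Since the image of $-\square_\rD\rD_\rC$ is subobject- and quotient-closed by Lemma \ref{lemclosed}, it is enough to treat a simple $\rC$-comodule $\rM$ of the form $\rM'\square_\rD\rD_\rC$; then $\rM\square_\rC\rD\cong\rM'\square_\rD\rD\square_\rC\rD\cong\rM'\neq0$ by Lemma \ref{MMMZCor7Lem8}\eqref{MMMZ7}, so $\rM\square_\rC\rD\neq0$, and by Lemma \ref{kill-simple} (the equivalence \eqref{kill-simple1}$\Leftrightarrow$\eqref{kill-simple3}, applied to the coidempotent $\rD$) the injective hull $\rQ$ of $\rM$ is \emph{not} in $\add\{\rF\rI\mid\rF\in\cC\}$, whence $[\rI,\rM]=0$. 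For the reverse inclusion ``$\supseteq$'': if $[\rI,\rM]=0$ then every simple subquotient $\rN$ of $\rM$ has $[\rI,\rN]=0$ (exactness of $[\rI,-]$), so by Lemma \ref{ker-comod}\eqref{ker-comod0} its injective hull is not in $\add\{\rF\rI\mid\rF\in\cC\}$, hence $\rN\square_\rC\rD\neq0$ by Lemma \ref{kill-simple} again, and then $\rN$ lies in the essential image of $-\square_\rD\rD_\rC$ by Lemma \ref{kill-simple} once more combined with Lemma \ref{ker-comod}\eqref{ker-comod2}; since that image is a Serre subcategory by Proposition \ref{NT-corresp}, closure under extensions propagates this up the composition series of $\rM$, giving $\rM$ itself in the image. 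Thus $\rD'\cong\rD$.

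Next, I would assemble the recollement. By Proposition \ref{NT-corresp} the essential image of $-\square_\rD\rD_\rC$ is a Serre subcategory of $\comod_{\uccC}(\rC)$; by the previous paragraph it is the kernel of the exact functor $[\rI,-]$. The adjoint triples $([\rD_\rC,-]=[\rD,-],\ -\square_\rD\rD_\rC\cong[{}_\rC\rD_\rD,-],\ -\square_\rC\rD)$ and $([[\rI,\rC],-],\ [\rI,-]\cong-\square_\rC[\rI,\rC],\ -\square_\rA\rI)$ are exactly those recorded before the statement (from Lemma \ref{adjlemma} and Lemma \ref{12.8}), with $-\square_\rD\rD_\rC$, $-\square_\rA\rI$ and $[[\rI,\rC],-]$ all fully faithful as shown there. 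The only remaining point is that $[\rI,-]$ induces an equivalence $\comod_{\uccC}(\rC)/\comod_{\uccC}(\rD)\simeq\comod_{\uccC}(\rA)$; this follows from the standard fact that an exact functor with a fully faithful right adjoint, whose kernel is the Serre subcategory being quotiented out, is a Serre quotient functor — here the fully faithful right adjoint is $-\square_\rA\rI$ — so all the axioms of recollement \eqref{recollement} hold.

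The main obstacle I anticipate is the identification step $\rD'\cong\rD$ in the second paragraph: showing that the abstractly-produced coidempotent subcoalgebra cut out by $\ker[\rI,-]$ is the one we started from. Everything hinges on being able to pass freely between ``$\rM\square_\rC\rD=0$'', ``the injective hull of the simple socle constituents of $\rM$ avoids $\add\{\rF\rI\mid\rF\in\cC\}$'', and ``$[\rI,\rM]=0$'', and on knowing that $\rI$ is genuinely the injective hull of $\rJ=\rC/\rD$ so that Lemma \ref{kill-simple} and Lemma \ref{seqzero}\eqref{seqzero2} apply in both directions; keeping track of which implications require coidempotency of $\rD$ (the converse directions) is the delicate bookkeeping. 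Once that dictionary is in place the recollement axioms fall out formally from the adjunctions and fully faithfulness already in hand.
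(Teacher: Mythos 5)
Your proposal is correct and follows essentially the same route as the paper: reduce the recollement axioms to showing that the essential image of $-\square_\rD\rD_\rC$ (a Serre subcategory by Proposition \ref{NT-corresp}) equals $\ker[\rI,-]$, check this on simple comodules via Lemma \ref{ker-comod}\eqref{ker-comod0}, Lemma \ref{kill-simple} and Lemma \ref{MMMZCor7Lem8}, with the adjoint triples and full faithfulness quoted from the preceding discussion. The only blemish is the unused side remark ``$[\rI,\rM]\cong\rI\square_\rC[\rM,\rC]$'', which misquotes Lemmas \ref{12.8} and \ref{Iinfromleft} (the correct statements are $[\rI,\rM]\cong\rM\square_\rC[\rI,\rC]$ and $\rI\square_\rC[\rM,\rC]\cong[\rM,\rI]$), but it plays no role in your argument.
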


\proof
We already know from Proposition \ref{NT-corresp} that the essential image of $\comod_{\uccC}(\rD)$ under  $-\square_\rD\rD_\rC$ is a Serre subcategory of $\comod_{\uccC}(\rC)$.  It remains to show that this coincides with the full subcategory consisting of $\rM\in\comod_{\uccC}(\rC)$ such that $[\rI,\rM]=0$.
It suffices to check that $[\rI,\rM]=0 \in \comod_{\uccC}(\rA)$ if and only if $\rM\cong \rM\square_\rC\rD\square_\rD\rD_\rC$.  Furthermore, it suffices to check that these conditions are equivalent for simple objects $\rM$.
 
Let $\rQ$ be the injective hull of $\rM$.
It follows from Lemma \ref{ker-comod}\eqref{ker-comod0} that $[\rI,\rM]=0$ is equivalent to $\rQ$ not being in $\add\{\rF\rI\mid\rF\in\cC\}$.
By Lemma \ref{kill-simple} this is then equivalent to $\rM\square_\rC\rD \neq 0$.

As $\rM\square_\rC\rD\square_\rD\rD_\rC$ is a subcomodule of $\rM$, the assumption of $\rM$ being simple means that $\rM\square_\rC\rD\neq 0$ is equivalent to $\rM\square_\rC\rD\square_\rD\rD_\rC\cong \rM$.
\endproof

\begin{proposition}\label{fix}
Let $\rC$ be a coalgebra 1-morphism, and $\rI$ an injective $\rC$-comodule. There exists a subcoalgebra $\rD$ of $\rC$, unique up to isomorphism, which is maximal with respect to $\rI\square_\rC\rD=0$, and such that $\inj_{\uccC}(\rD)$ is equivalent to the quotient $2$-representation $\inj_{\uccC}(\rC)/\bfG_{\inj_{\uccC}(\rC)}(\rI)$. Furthermore, $\rD$ is coidempotent.
\end{proposition}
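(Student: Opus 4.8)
The plan is to realise $\rD$ as the subcoalgebra that carves out the kernel of a suitable internal hom functor (which automatically forces $\rD$ to be coidempotent), to read off $\rI\square_\rC\rD=0$ and its maximality from a short argument about socles of injective comodules, and finally to identify the quotient $2$-representation by reducing to the case where $\rI$ is the injective hull of the cokernel of $\iota$ and then quoting Proposition~\ref{recollcondition} together with Lemma~\ref{recolllem}. I expect this last reduction to be the main point.

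Concretely, I would first set $\rA:=[\rI,\rI]$ and note that $[\rI,-]\colon\comod_{\uccC}(\rC)\to\comod_{\uccC}(\rA)$ is an exact morphism of $2$-representations by Lemma~\ref{12.8}. Lemma~\ref{ker-comod}\eqref{ker-comod1} then yields a coidempotent subcoalgebra $\rD\overset{\iota}{\hookrightarrow}\rC$ for which the embedded Serre subcategory $-\square_\rD\rD_\rC(\comod_{\uccC}(\rD))$ of $\comod_{\uccC}(\rC)$ equals $\ker[\rI,-]$; this settles the existence of $\rD$ and the coidempotency assertion. Throughout I would use that $-\square_\rC\rD$ is right adjoint to the fully faithful exact inclusion $-\square_\rD\rD_\rC$ (Lemmas~\ref{lemclosed}, \ref{MMMZCor7Lem8}), and that for a simple comodule $\rM$ one has $\rM\square_\rC\rD\neq 0$ exactly when $\rM$ lies in the embedded $\comod_{\uccC}(\rD)$ --- this holds because $\rM\square_\rC\rD\square_\rD\rD_\rC$ is a subcomodule of $\rM$ by Lemma~\ref{MMMZCor7Lem8}\eqref{MMMZ8}, hence is $0$ or $\rM$ by simplicity, while $-\square_\rD\rD_\rC$ is fully faithful and $-\square_\rD\rD\square_\rC\rD\cong\Id$ by Lemma~\ref{MMMZCor7Lem8}\eqref{MMMZ7}.

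Next I would prove $\rI\square_\rC\rD=0$ and the maximality. For the former, $\rI\square_\rC\rD\square_\rD\rD_\rC$ is a subobject of $\rI$ lying in the Serre subcategory $\comod_{\uccC}(\rD)$ (Lemma~\ref{MMMZCor7Lem8}\eqref{MMMZ8}); a nonzero such subobject would contain a simple $\rS$ with $[\rI,\rS]=0$, contradicting $\Hom_{\uccC}([\rI,\rS],\one)\cong\Hom_{\comod_{\uccC}(\rC)}(\rS,\rI)\neq 0$, so $\rI\square_\rC\rD=0$ as $-\square_\rD\rD_\rC$ is faithful. For maximality, let $\rD'$ be any subcoalgebra with $\rI\square_\rC\rD'=0$. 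Since the embedded $\comod_{\uccC}(\rD)=\ker[\rI,-]$ is Serre and the embedded $\comod_{\uccC}(\rD')$ is closed under subquotients with simples going to simples (Lemma~\ref{lemclosed}), it suffices to show each simple $\rM\in\comod_{\uccC}(\rD')$ satisfies $[\rI,\rM]=0$; but if $\rM\hookrightarrow\rF\rI$ for some $\rF\in\cC$, applying the left exact $-\square_\rC\rD'$ gives $\rM\square_\rC\rD'\hookrightarrow\rF(\rI\square_\rC\rD')=0$, contradicting $\rM\square_\rC\rD'\neq 0$ (Lemma~\ref{MMMZCor7Lem8}\eqref{MMMZ7}); so no such embedding exists, and by Lemma~\ref{ker-comod}\eqref{ker-comod0} we get $[\rI,\rM]=0$, i.e.\ $\rM\in\comod_{\uccC}(\rD)$. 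Hence $\rD$ is the largest subcoalgebra with $\rI\square_\rC\rD=0$, and uniqueness up to isomorphism follows from the bijection of Proposition~\ref{cat-coalg-bij}.

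Finally, for the quotient, I would set $\rJ:=\rC/\rD$, let $\rI_0$ be the injective hull of $\rJ$ in $\comod_{\uccC}(\rC)$ and $\rA_0:=[\rI_0,\rI_0]$, and first establish the key equality $\bfG_{\inj_{\uccC}(\rC)}(\rI)=\bfG_{\inj_{\uccC}(\rC)}(\rI_0)$. For a simple $\rM$ with injective hull $\rQ$, the chain
\begin{equation*}
\rQ\in\add\{\rF\rI\mid\rF\in\cC\}\iff[\rI,\rM]\neq 0\iff\rM\notin\comod_{\uccC}(\rD)\iff\rM\square_\rC\rD=0\iff\rQ\in\add\{\rF\rI_0\mid\rF\in\cC\}
\end{equation*}
holds, using Lemma~\ref{ker-comod}\eqref{ker-comod0} for the first equivalence, the definition of $\rD$ and the simple-object characterisation above for the middle two, and Lemma~\ref{kill-simple} (valid since $\rD$ is coidempotent and $\rI_0$ is the injective hull of the cokernel of $\iota$) for the last; since every indecomposable injective is the injective hull of its simple socle, this gives $\add\{\rF\rI\mid\rF\in\cC\}=\add\{\rF\rI_0\mid\rF\in\cC\}$ in $\inj_{\uccC}(\rC)$ and hence the claimed equality of generated sub-$2$-representations. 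As $\rD$ is coidempotent and $\rI_0$ is the injective hull of the cokernel of $\iota$, Proposition~\ref{recollcondition} provides a recollement of the form~\eqref{recollement} with $\rA_0,\rI_0$ replacing $\rA,\rI$; all comodule categories here being abelian with enough injectives, Lemma~\ref{recolllem} restricts its right adjoints to a short exact sequence $0\to\inj_{\uccC}(\rA_0)\xrightarrow{-\square_{\rA_0}\rI_0}\inj_{\uccC}(\rC)\xrightarrow{-\square_\rC\rD}\inj_{\uccC}(\rD)\to 0$ in $\addcat$, which is an extension of finitary $2$-representations since the cotensor functors are morphisms of $2$-representations. The essential image of $-\square_{\rA_0}\rI_0$ is $\add\{\rF\rI_0\mid\rF\in\cC\}=\bfG_{\inj_{\uccC}(\rC)}(\rI)$, so by the definition of short exactness $-\square_\rC\rD$ is full and dense with kernel the ideal generated by $\bfG_{\inj_{\uccC}(\rC)}(\rI)$, hence it induces an equivalence of $2$-representations $\inj_{\uccC}(\rC)/\bfG_{\inj_{\uccC}(\rC)}(\rI)\simeq\inj_{\uccC}(\rD)$. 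The hard part is precisely the equality $\bfG_{\inj_{\uccC}(\rC)}(\rI)=\bfG_{\inj_{\uccC}(\rC)}(\rI_0)$, which lets one replace the arbitrary injective $\rI$ by the injective hull of $\rJ$ and thereby bring Proposition~\ref{recollcondition} to bear; the remaining manipulations are routine given the adjunctions and the Serre-subcategory formalism already in place.
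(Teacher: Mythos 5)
Your proof is correct, but it runs in the opposite direction to the paper's. The paper starts from the quotient $2$-representation $\inj_{\uccC}(\rC)\to\bfK=\inj_{\uccC}(\rC)/\bfG_{\inj_{\uccC}(\rC)}(\rI)$ and invokes the construction of \cite[Section 3.2, Proposition 11]{MMMZ}, which produces the subcoalgebra $\rD$ together with the identifications $\bfK\simeq\inj_{\uccC}(\rD)$ and $\pi=-\square_\rC\rD$ essentially by citation; maximality is then a short argument, and the real work is proving coidempotency, done by a hands-on injective-presentation argument showing that the essential image of $-\square_\rD\rD_\rC$ is extension-closed, followed by Proposition~\ref{NT-corresp} and Proposition~\ref{cat-coalg-bij}. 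You instead define $\rD$ intrinsically via Lemma~\ref{ker-comod}\eqref{ker-comod1} as the coidempotent subcoalgebra realising $\ker[\rI,-]$, so coidempotency (and, via the bijection, uniqueness) comes for free, and you get $\rI\square_\rC\rD=0$ together with the stronger ``largest'' form of maximality from socle arguments on simples; the real work in your route is the quotient identification, which you achieve by proving $\add\{\rF\rI\mid\rF\in\cC\}=\add\{\rF\rI_0\mid\rF\in\cC\}$ for $\rI_0$ the injective hull of $\rC/\rD$ (via Lemma~\ref{kill-simple}) and then quoting Proposition~\ref{recollcondition} and Lemma~\ref{recolllem}. What each buys: your argument stays entirely within the paper's internal results (no appeal to the quotient-coalgebra machinery of \cite{MMMZ}) and yields the sharper statement that $\rD$ contains every subcoalgebra annihilating $\rI$, but it front-loads the recollement technology that the paper only uses for Theorem~\ref{thm-main}\eqref{thm1.1}; the paper's route gets the equivalence $\bfK\simeq\inj_{\uccC}(\rD)$ with the correct quotient functor for free and concentrates the effort on coidempotency. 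Two steps you gloss — passing from containment of the embedded subcategories to containment of the subcoalgebras (this follows from identifying each subcoalgebra with the maximal subobject $ip(\rC)$ of $\rC$ in its subcategory, as in the proofs of Lemma~\ref{realise-subcat} and Proposition~\ref{cat-coalg-bij}), and identifying the essential image of the restricted $-\square_{\rA_0}\rI_0$ with $\add\{\rF\rI_0\mid\rF\in\cC\}$ — are both easily filled with the tools already in place.
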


\proof
Consider the exact sequence of $2$-representations
$$0 \to \bfG_{\inj_{\uccC}(\rC)}(\rI) \to  \inj_{\uccC}(\rC) \overset{\pi}{\to} \bfK\to 0.$$
The construction in \cite[Section 3.2]{MMMZ} produces, for any full and dense morphism of $2$-representations, an embedding of a subcoalgebra, and this embedding is strict if and only if the full and dense morphism is not an equivalence.

Explicitly, in our situation, this construction defines the coalgebra $1$-morphism $\rD$ via
$$\Hom_{\bfK}(\rC, \rF\rC)\cong \Hom_{\uccC}(\rD, \rF)$$
for all $1$-morphisms $\rF$ in $\cC$ and produces an embedding $\iota:\rD\hookrightarrow\rC$.

Furthermore, $\bfK$ is equivalent to $\inj_{\uccC}(\rD)$ and the full and dense morphism of $2$-representations $\inj_{\uccC}(\rC)\twoheadrightarrow\inj_{\uccC}(\rD)$ corresponding to $\pi$ is given by $-\square_\rC\rD$ by \cite[Proposition 11]{MMMZ}. 

We assert that $\rD$ is maximal among subcoalgebras $\rB$ of $\rC$ with $\rI\square_\rC\rB=0$. Firstly, note that we indeed have $\rI\square_\rC\rD=0$ by exactness of 
\begin{equation}\label{prop18ses}0\to \bfG_{\inj_{\uccC}(\rC)}(\rI)\to \inj_{\uccC}(\rC)\xrightarrow{-\square_\rC\rD}\inj_{\uccC}(\rD) \to 0.\end{equation}
Secondly, if $\rB$ is another subcoalgebra of $\rC$, strictly containing $\rD$, then we obtain a full and dense morphism of $2$-representations $\inj_{\uccC}(\rB)\twoheadrightarrow
\inj_{\uccC}(\rD)$ that is not an equivalence.
Hence the kernel of $-\square_\rC\rB$ would be strictly contained in the kernel of $-\square_\rC\rD$.  By exactness of \eqref{prop18ses}, the latter ideal (of $\inj_{\uccC}(\rC)$) is the same as the ideal generated by $\bfG_{\inj_{\uccC}(\rC)}(\rI)$, so there must be some $\rQ\in \bfG_{\inj_{\uccC}(\rC)}(\rI)$ so that $\rQ\square_\rC\rB\neq 0$.
But $\rQ\square_\rC\rB$ is a direct summand of $\rF\rI\square_\rC\rB$, so we deduce that $\rI\square_\rC\rB\neq 0$.

It remains to show that $\rD$ is coidempotent.
We first claim that the essential image in $\comod_{\uccC}(\rD)$ of the fully faithful functor $-\square_\rD\rD_\rC$ is closed under extensions.
Indeed, assume $\rM_1,\rM_2$ are in the full subcategory given by the essential image of $-\square_\rD\rD_\rC$. In particular, for $i=1,2$,  we have $\rM_i\cong \rM_i\square_\rC\rD\square_\rD\rD_\rC$, which implies that no composition factor of $\rM_i$ is annihilated by $-\square_\rC\rD$. Consider an extension $0\to\rM_1\to \rM\to\rM_2 \to 0$ in $\comod_{\uccC}(\rC)$. Then also no composition factor of $\rM$ is annihilated by $-\square_\rC\rD$, which implies that there is no map from $\rM$ to any injective $\rQ\in \bfG_{\inj_{\uccC}(\rC)}(\rI)$.

We obtain a commutative diagram with exact rows
$$\xymatrix{
0 \ar[r]&\rM_1\square_\rC\rD\square_\rD\rD_\rC \ar^{\sim}[d]\ar[r]&\rM\square_\rC\rD\square_\rD\rD_\rC\ar@{^{(}->}[d]\ar[r]&\rM_2\square_\rC\rD\square_\rD\rD_\rC\ar^{\sim}[d]&\\
0\ar[r]&\rM_1\ar[r]&\rM\ar[r]&\rM_2\ar[r]&0
}$$
in $\comod_{\uccC}(\rC)$.

Let $q \colon \rM \to \rN$ be the cokernel map of the middle vertical map in the diagram and let 
$$\xymatrix{
\rM \ar@{^{(}->}^j[r]\ar@{->>}^{q}[d] & \rQ_1 \ar^{g}[r]\ar^{q_1}[d] &\rQ_2\ar^{q_2}[d] \\
 \rN\ar@{^{(}->}^{j'}[r]& \rQ_1' \ar^{g'}[r] &\rQ_2'
}$$
in be a lift of $q$ to an injective presentation. Since $q$ is annihilated by $-\square_\rC\rD$, the map $q_1 \square_\rC\rD$ factors over $g\square_\rC\rD$. 
By fullness of $-\square_\rC\rD$, this implies that there already is a map $h\colon \rQ_2\to \rQ_1'$ such that setting $q_1'=q_1-hg$, we have $q_1' \square_\rC\rD=0$. Note that replacing $q_1$ by $q_1'$ and $q_2$ by $q_2-g'h$ defines  another lift of $q$ to a map between injective presentations, so without loss of generality, we may assume that we already had $q_1 \square_\rC\rD=0$ by choosing $q_1$ appropriately. By exactness of \eqref{prop18ses}, this implies that 
$q_1$ factors over an object $\rF\rI$ in $ \bfG_{\inj_{\uccC}(\rC)}(\rI)$ and the first two columns of the diagram give rise to a commutative diagram
$$\xymatrix{
\rM \ar@{^{(}->}^{j}[r]\ar@{->>}^{q}[dd] & \rQ_1 \ar^{q_1}[dd] \ar[rd]& \\
&& \rF\rI\ar[ld]\\
 \rN\ar@{^{(}->}^{j'}[r]& \rQ_1'.
}$$
Now the fact that there is no nonzero map from $\rM$ to any object in $ \bfG_{\inj_{\uccC}(\rC)}(\rI)$ implies that $j'q=q_1j=0$ and by monicity of $j'$, we conclude that $q=0$. Therefore, the embedding $\rM\square_\rC\rD\square_\rD\rD_\rC \hookrightarrow \rM$ is an isomorphism, meaning that $\rM$ is in the essential image of $-\square_\rD\rD_\rC$.
This finishes the proof for the claim that the essential image of $-\square_\rD\rD_\rC$ is extension-closed.

By Lemma \ref{lemclosed}, we already know that $-\square_\rD\rD_\rC$ embeds $\comod_{\uccC}(\rD)$ as a full subcategory of $\comod_{\uccC}(\rC)$ that is closed under subobjects and quotients.
So the essential image of $-\square_\rD\rD_\rC$ being also closed under extensions implies that it is a Serre subcategory.
Now the statement that $\rD$ is coidempotent follows from Proposition \ref{NT-corresp}, whereas the uniqueness of $\rD$ follows from Proposition \ref{cat-coalg-bij}.\endproof

\subsection{The main result.}
\begin{theorem}\label{thm-main}
\begin{enumerate}[$($i$)$]
\item\label{thm1.1}
Let $\rC$ be a coalgebra $1$-morphism in $\underline{\cC}$ and suppose $\rD$ is a coidempotent subcoalgebra  of $\rC$. Set $\rJ:=\rC/\rD$. Let $\rI$ be the injective hull of $\rJ$ inside $\comod_{\underline{\ccC}}(\rC)$ and set $\rA=[\rI,\rI]$.Then we have a short exact sequence of $2$-representations
$$0 \longrightarrow \inj_{\uccC}(\rA) \xrightarrow{-\square_\rA\rI} \inj_{\uccC}(\rC)\xrightarrow{-\square_\rC\rD}\inj_{\uccC}(\rD) \longrightarrow 0.$$

\item\label{thm1.2} Suppose $$0\longrightarrow\bfN\longrightarrow\bfM\longrightarrow\bfK\longrightarrow 0$$ is a short exact sequence of $2$-representations. 
Then, choosing a coalgebra $1$-morphism $\rC$ with $\bfM\cong \inj_{\uccC}(\rC)$, there exists a subcoalgebra $\rD$ of $\rC$, unique up to isomorphism, which is maximal with respect to $X\square_\rC\rD=0$ for all $X\in \coprod_{\ti\in\ccC} \bfN(\ti)$, and such that $\inj_{\uccC}(\rD)$ is equivalent to the quotient $2$-representation $\bfK$. Furthermore, $\rD$ is coidempotent. 
\end{enumerate}
\end{theorem}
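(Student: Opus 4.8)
\emph{Part (i).} The plan is to extract the sequence from a recollement. Since $\rD$ is coidempotent and, by construction, $\rI$ is the injective hull in $\comod_{\uccC}(\rC)$ of the cokernel $\rJ=\rC/\rD$ of the inclusion $\iota\colon\rD\hookrightarrow\rC$, Proposition \ref{recollcondition} applies verbatim and produces exactly the recollement \eqref{recollement}, with $\rA=[\rI,\rI]$. All three abelian categories appearing in it have enough injectives, and the functors $-\square_\rD\rD_\rC$ and $[\rI,-]$ pointing to the right are exact by Lemmas \ref{lemclosed} and \ref{12.8}; hence Lemma \ref{recolllem} applies to \eqref{recollement}, and the pair of right adjoints $-\square_\rA\rI$ and $-\square_\rC\rD$ restricts to a short exact sequence in $\addcat$
$$0\to\inj_{\uccC}(\rA)\xrightarrow{-\square_\rA\rI}\inj_{\uccC}(\rC)\xrightarrow{-\square_\rC\rD}\inj_{\uccC}(\rD)\to 0,$$
where we have used the equivalences $\comod_{\uccC}(\rX)\simeq\underline{\inj_{\uccC}(\rX)}$ to identify the full subcategories of injectives with $\inj_{\uccC}(\rX)$. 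Since $-\square_\rA\rI$ and $-\square_\rC\rD$ are morphisms of $2$-representations (the cotensor-product functors commute with the $\cC$-action, horizontal composition with $1$-morphisms being exact; cf. the discussion preceding Proposition \ref{recollcondition}), this is a short exact sequence of $2$-representations.

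\emph{Part (ii).} Fix an equivalence of $2$-representations $\bfM\simeq\inj_{\uccC}(\rC)$ and transport the given sequence along it, so that $\Phi$ becomes a fully faithful morphism $\bfN\hookrightarrow\inj_{\uccC}(\rC)$. Its essential image is a $\cC$-stable full subcategory (because $\Phi$ is a morphism of $2$-representations) which is closed under direct summands (because $\Phi$ is full and faithful and $\bfN$ is idempotent complete), hence coincides with $\bfG_{\inj_{\uccC}(\rC)}(\rI)$, where $\rI\in\inj_{\uccC}(\rC)$ is the direct sum of a set of representatives of the isomorphism classes of indecomposable objects of $\bfN$; in particular $\rI$ is an injective $\rC$-comodule. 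As $\Psi$ is full and dense with kernel the ideal generated by the image of $\bfN$ — equivalently, by $\bfG_{\inj_{\uccC}(\rC)}(\rI)$ — the target $\bfK$ is equivalent, as a $2$-representation, to the quotient $\inj_{\uccC}(\rC)/\bfG_{\inj_{\uccC}(\rC)}(\rI)$. Now apply Proposition \ref{fix} to $\rC$ and $\rI$: it yields a subcoalgebra $\rD\subseteq\rC$, unique up to isomorphism, coidempotent, maximal with respect to $\rI\square_\rC\rD=0$, and with $\inj_{\uccC}(\rD)\simeq\inj_{\uccC}(\rC)/\bfG_{\inj_{\uccC}(\rC)}(\rI)\simeq\bfK$. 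It remains only to rephrase the maximality condition: every $X\in\coprod_{\ti\in\ccC}\bfN(\ti)$ is a direct summand of some $\rI^{\oplus n}$ (as $\rI$ is the sum of all indecomposables of $\bfN$), so $X\square_\rC\rD$ is a direct summand of $(\rI\square_\rC\rD)^{\oplus n}$, while conversely $\rI$ is itself such an $X$; thus $\rI\square_\rC\rD=0$ if and only if $X\square_\rC\rD=0$ for all $X\in\coprod_{\ti\in\ccC}\bfN(\ti)$, so the maximality in Proposition \ref{fix} is precisely the maximality in the statement. This produces the required $\rD$.

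\emph{Main obstacle.} Both parts are largely an assembly of results already established (Proposition \ref{recollcondition}, Lemma \ref{recolllem}, Proposition \ref{fix}), with no substantial new computation. The delicate point is the bookkeeping in (ii): one must verify that, after transport along the chosen equivalence $\bfM\simeq\inj_{\uccC}(\rC)$, the given sequence genuinely becomes the canonical one $0\to\bfG_{\inj_{\uccC}(\rC)}(\rI)\to\inj_{\uccC}(\rC)\to\inj_{\uccC}(\rC)/\bfG_{\inj_{\uccC}(\rC)}(\rI)\to 0$ up to equivalence — i.e.\ that $\bfN\simeq\bfG_{\inj_{\uccC}(\rC)}(\rI)$ as sub-$2$-representations and that $\bfK$ is the corresponding quotient $2$-representation rather than merely abstractly resembling it — so that Proposition \ref{fix} can legitimately be invoked. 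In (i) the analogous subtlety is that Lemma \ref{recolllem} is an additive statement, so one separately needs the functors in \eqref{recollement} to be morphisms of $2$-representations in order to upgrade its conclusion to an extension of $2$-representations.
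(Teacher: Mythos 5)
Your proposal is correct and follows essentially the same route as the paper: part (i) is Proposition \ref{recollcondition} combined with Lemma \ref{recolllem}, and part (ii) chooses $\rI$ so that $\bfN$ corresponds to $\bfG_{\inj_{\uccC}(\rC)}(\rI)$, notes that $\rI\square_\rC\rD=0$ is equivalent to the vanishing condition on all of $\coprod_{\ti\in\ccC}\bfN(\ti)$, and invokes Proposition \ref{fix}. Your extra bookkeeping (checking the cotensor functors are morphisms of $2$-representations and identifying $\bfK$ with the canonical quotient) merely makes explicit what the paper's terser proof leaves implicit.
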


\proof
\eqref{thm1.1} The assumption is precisely that of Proposition \ref{recollcondition}, so we obtain a recollement of the form \eqref{recollement}.  Now the claim follows from Lemma \ref{recolllem}.

\eqref{thm1.2} 
Choose $\rI$ such that, under the equivalence $\bfM\cong \inj_{\uccC}(\rC)$, the $2$-subrepresentation $\bfN$ corresponds to $\bfG_{\inj_{\uccC}(\rC)}(\rI)$. Then, for a subcoalgebra $\rD$ of $\rC$, we have $X\square_\rC\rD=0$ for all $X\in \coprod_{\ti\in\ccC} \bfN(\ti)$ if and only if $\rI\square_\rC\rD=0$.
The claim now follows from Proposition \ref{fix}.
\endproof

\section{Examples}\label{sec:eg}

\subsection{Projective functors over dual numbers}
Let $R=\Bbbk[x]/(x^2)$ be the ring of dual numbers.
Consider the $2$-category $\cC_R$ of projective functors on $R\lmod$, see e.g. \cite[Example 2]{MM1}.
More precisely, this is the $2$-category with one object $\ti$, which we identify with a small category $\mathcal{R}$ equivalent to $R\lmod$, and $\cC_R(\ti,\ti)$ is the full subcategory of all endofunctors of $\mathcal{R}$ given by functors isomorphic to tensoring over $R$ with an $R$-$R$-bimodule in $\add(R\oplus R\otimes_{\Bbbk}R)$.

The $2$-category $\cC_R$ has two indecomposable $1$-morphisms $\one$ and $\rF$ corresponding to the identity functor and to tensoring with $R\otimes R$, respectively.

Since the principal $2$-representation $\bfP=\cC_R(\ti,-)$ is generated by $\one$, we have a coalgebra $1$-morphism $\rC_\bfP$ corresponding to $\bfP$ given by  $[\one,\one]$.
Using the fact that the underlying category of $\underline{\bfP}$ is precisely $\ucC_R$, one can see that $[\one,\one]\cong\one$ as an object in $\ucC_R(\ti,\ti)$ and comultiplication and counit are both the identity map.  Note that this argument applies for any principal $2$-representation of a finitary $2$-category.

There are two simple transitive $2$-representations, denoted by $\bfC_{\L}, \bfC_{\one}$, up to equivalence (see \cite{MM5} for details). Here
$\bfC_{\one}$ is the trivial $2$-representation, whose underlying category is equivalent to $\Bbbk\lmod$, where $\rF$ acts by annihilating everything. On the other hand, $\bfC_{\L}$ is the natural $2$-representation, whose underlying category is equivalent to $R\proj$, where $\rF$ acts as $R\otimes_{\Bbbk}R\otimes_R-$.

It follows from \cite[Theorem 22]{MMMT} that the as an object of  $\ucC_R(\ti,\ti)$, the coalgebra $1$-morphism $\rC_\L:=[R,R]$ is isomorphic to $\rF$.
For $\bfC_{\one}$, the corresponding coalgebra $1$-morphism $\rC_\one:=[\Bbbk,\Bbbk]$ can be calculated via the defining adjunction isomorphisms $\Hom_{\uccC_R}(\rC_\one,\rG)\cong \Hom_{\underline{\bfC}_\one(\ti)}(\Bbbk,\rG\Bbbk)$ for all indecomposable $1$-morphisms $\rG$, which yield that it is is isomorphic to the simple socle $L_\one$ of $\one$ in $\ucC_R(\ti,\ti)$.
In fact, $\bfC_{\one}$ is a quotient $2$-representation of $\bfP$, so $\rC_\one$ is a subcoalgebra of $\rC_\bfP$, which implies that the counit and comultiplication maps 
defining $\rC_\one$ are both the identity map on $L_\one$.

There is a short exact sequence of finitary $2$-representations
\[
0 \to \bfC_{\L}\boxtimes \A \to \bfP \to \bfC_{\one} \to 0,
\]
where $\bfC_{\L}\boxtimes \A$ is the inflation of $\bfC_{\L}$ by $\A:=R\proj$ (see \cite{MM6} for details about inflations).  Computing coalgebra $1$-morphism corresponding to $\bfC_{\L}\boxtimes \A$ via the defining adjunction isomorphism $\Hom_{\uccC}([R\otimes R,R\otimes R],\rF)\cong \Hom_{\underline{\bfC_{\L}\boxtimes \A}}(R\otimes R,\rF(R\otimes R))$ shows that its underlying object in $\ucC_R(\ti,\ti)$ is isomorphic to $\rF\oplus \rF$.
Thus, the coalgebra $1$-morphisms $\rD, \rC$ in Theorem \ref{thm-main} corresponding to the above short exact sequence are $\rC_\one$, $\rC_\bfP$.
Since the quotient of $\one$ by $L_\one$ has simple socle $L_\rF$, the coalgebra $1$-morphism is $\rA=[\rF,\rF]$ and it has underlying object  in $\ucC_R(\ti,\ti)$ given by $\rF\oplus \rF$.

Let us look at another finitary $2$-representation $\bfM$, whose underlying category is equivalent to $\Lambda\proj$, where $\Lambda:=\End_R(R\oplus \Bbbk)$ (the action of $\cC_R$ is naturally induced by that on the natural $2$-representation $\bfC_\L$).
There is a short exact sequence of 
\[
0 \to \bfC_{\L} \to \bfM \to \bfC_{\one} \to 0.
\]
We already explained that the coalgebra $1$-morphisms $\rA,\rD$ corresponding to the first and last term, respectively, are $\rF$ and $L_\one$.
It is possible to calculate the object in $\ucC_R(\ti,\ti)$ underlying the coalgebra $1$-morphism $\rC_\bfM$ corresponding to $\bfM$ as follows.

First note that there is a quotient morphism $\bfP\to \bfM$, so we can take $\rC_\bfM$ is to be the subcoalgebra of $\rC_\bfP$ given by $[M,M]$ with $M$ being the underlying object of $\rC_\bfM$ in $\ucC=\bfP(\ti)$.
Note that the internal Hom used here is taken in $\bfP$ (instead of $\bfM$).
The underlying object of $\rC_\bfP$ is the injective object $\one$ of $\ucC_R(\ti, \ti)$, and $\one$ turns out to be uniserial with four composition factor, $L_\rF, L_\one, L_\rF, L_\one$ from top to socle, where $L_\rF$ is the simple socle of $\rF$.
By the above short exact sequence, $\rC_\one$ is coidempotent subcoalgebra of $\rC_\bfM$ and $\rC_\one\ncong \rC_\bfM$, so the underlying object is not $L_\one$. 
Since $\rC_\bfP \ncong \rC_\bfM$, the underlying object $M$ of $\rC_\bfM$ can only be either the length 2 or the length 3 subobject of $\one$.
These two objects can be distinguished by the dimension of the Hom-space of maps to $\one$ in $\ucC$ - they are of dimension 1 and 2 respectively.
By construction, $M\cong [M,M]=\rC_\bfM$ as objects in $\bfP(\ti)$ (c.f. Proof of Lemma \ref{realise-subcat}) and $\Hom_{\uccC}(\rC_\bfM,\one)\cong \Hom_{\underline{\bfP}}(M, M)$.
Note that $\Hom_{\underline{\bfP}}(M,M)\cong \Hom_{\underline{\bfM}}(M,M)$ as $\underline{\bfM}(\ti)\to\underline{\bfP}(\ti)$ is a fully faithful embedding.

We claim that $ \Hom_{\underline{\bfM}}(M,M)\cong \Bbbk$; in which case, we can conclude that $M$ is the subobject of $\one$ of length 2.
Indeed, under the equivalence between the underlying category of $\bfM$ and $\Lambda\proj$, $M$ corresponds to the indecomposable projective $\Lambda$-module $P$ such that $\rF(P)\notin \add(P)$.  By the construction of 2-representation structure on $\Lambda\proj$, $\rF(P')\in\add (P')$ for an indecomposable projective $\Lambda$-module $P'$ if and only if $P'\cong\Hom_R(R\oplus \Bbbk, R)$.  So we have $P\cong \Hom_R(R\oplus \Bbbk, \Bbbk)$, which is a uniserial module with a 1-dimensional endomorphism ring; this means that $\Hom_{\underline{\bfM}}(M,M)=\Bbbk$, as claimed.

Note that the comultiplication and counit maps defining $\rC_\bfM$ are both identity, since $\rC_\bfM$ is a subcoalgebra of $\rC_\bfP$.


Let us summarise, for clarity, the object underlying each coalgebra $1$-morphism corresponding to the $2$-representations mentioned, in the table below.
\[
\begin{array}{ccccc}
\bfP & \bfC_\one & \bfC_\L & \bfC_\L\boxtimes \A & \bfM \\
\hline 
\one & L_\one & \rF & \rF\oplus \rF & \begin{array}{c} L_\rF \\ L_\one \end{array} 
\end{array}
\]

\subsection{Triangular coalgebras}

In this section, we provide a more general class of examples. Let $\cC$ be a weakly fiat $2$-category, $\rA$ and $\rD$ coalgebra $1$-morphisms in $\cC$ with comultiplications and counits given by $\mu_\rA,\epsilon_\rA$ and $\mu_\rD,\epsilon_\rD$, respectively, and ${}_\rA\rM_\rD$ a $\rA$-$\rD$-bicomodule with left and right coactions $\lambda$ respectively $\rho$. We define a coalgebra structure $\mu_\rC,\epsilon_\rC$ on $\rC:=\rA\oplus\rD\oplus\rM$ by specifying that
\begin{itemize}
\item the restriction of $\mu_\rC$ to $\rX$ is $\mu_\rX$ for $\rX\in\{\rC,\rD\}$,
\item the restriction of $\epsilon_\rC$ to $\rX$ is $\epsilon_\rX$ for $\rX\in\{\rC,\rD\}$,
\item the restriction of $\mu_\rC$ to $\rM$ is $\left(\begin{array}{c}\lambda\\ \mu\end{array}\right)\colon \rM\to \rA\rM\oplus \rM\rD$, and
\item the restriction of $\epsilon_\rC$ to $\rM$ is zero.
\end{itemize}
It is straightforward to check that this indeed defines a coalgebra structure on $\rC$.

We claim that $\rD$ is a coidempotent subcoalgebra. It is a subcoalgebra by definition, so we need to check that $\mu_\rC^{-1}(\rC\rD+\rD\rC) = \rD$. Now we have $$\rC\rD+\rD\rC\cong \rA\rD\oplus \rD\rD\oplus \rM\rD + \rD\rA \oplus\rD\rM\oplus \rD\rD.$$

As none of $\rA\rD, \rD \rA, \rD\rM$ is in the range of $\mu_\rC$, we have $\mu_\rC^{-1}(\rC\rD+\rD\rC)=\mu_\rC^{-1}(\rD\rD\oplus \rM\rD)$.  
Since $\mu_\rC$ sends $\rM$ to $\rA\rM\oplus \rM\rD$, but only $\rM\rD$ is a direct summand of $\rC\rD+\rD\rC$, we get that 
$$\mu_{\rC}^{-1}(\rD\rD\oplus \rM\rD)=\mu_\rD^{-1}(\rD\rD)\oplus \left(\lambda^{-1}(0)\cap \rho^{-1}(\rM\rD)\right).$$
It follows from the construction that $\mu_\rD^{-1}(\rD\rD)=\rD$, whereas $\lambda^{-1}(0)=0$ due to $\lambda$ being mono, so we obtain $\mu_{\rC}^{-1}(\rC\rD+\rD\rC)=\rD$, i.e. $\rD$ is coidempotent as claimed.

%
%
%
 


\begin{thebibliography}{99999}
\bibitem[BW]{BW} T. Brzezinski, R. Wisbauer. Corings and comodules. LMS Lecture Note Series 309 (2003).
\bibitem[CM]{CM} A.~Chan, V.~Mazorchuk. Diagrams and discrete extensions for finitary $2$-representations. Math. Proc. Cambr. Phil. Soc. DOI 10.1017/S0305004117000858.
\bibitem[CR]{CR} J.~Chuang, R.~Rouquier. Derived equivalences for 
 symmetric groups and $\mathfrak{sl}_2$-ca\-te\-go\-ri\-fi\-ca\-ti\-on. 
Ann. of Math.  (2) {\bf 167} (2008), no. 1, 245--298. 
\bibitem[FP]{FP} V. Franjou, T. Pirashvili. Comparison of abelian categories recollements. Documenta Math. {\bf 9} (2004), 41--56.
\bibitem[Fr]{Fr} P.~Freyd. Representations in abelian categories. 
Proc. Conf. Categorical Algebra (1966), 95--120.
\bibitem[KL]{KhLa} M.~Khovanov, A.~Lauda. A categorification of a
quantum $\mathfrak{sl}_n$. Quantum Topol. {\bf 1} (2010), 1--92.
\bibitem[Le]{Le}  T.~Leinster. Basic bicategories. Preprint, arXiv:math/9810017.
%
\bibitem[McL]{McL} S.~Mac Lane. Categories for the working mathematician. 
Second edition. Graduate Texts in Mathematics, {\bf 5}. Springer-Verlag, New York, 1998.
\bibitem[MMMT]{MMMT} M.~MacKaay, V.~Mazorchuk, V.~Miemietz, D.~Tubbenhauer. Simple transitive $2$-representations via (co)algebra $1$-morphisms. Preprint, arXiv:1612.06325, to appear in Indiana University Math. J.
\bibitem[MMMZ]{MMMZ} M.~MacKaay, V.~Mazorchuk, V.~Miemietz, X.~Zhang. Analogues of centralizer subalgebras for fiat 2-categories and their 2-representations. Preprint, arXiv:1802.02078. 
\bibitem[MM1]{MM1} V.~Mazorchuk, V.~Miemietz. Cell $2$-representations of finitary $2$-categories. Compositio Math. {\bf 147} (2011), 1519--1545.
\bibitem[MM2]{MM2} V.~Mazorchuk, V.~Miemietz. Additive versus abelian $2$-representations of  fiat $2$-ca\-te\-go\-ri\-es. Moscow Math. J. {\bf 14} (2014), no. 3, 595--615.
\bibitem[MM3]{MM3} V.~Mazorchuk, V.~Miemietz. Endomorphisms of cell $2$-representations.  IMRN (2016), no. 24, 7471--7498.
 \bibitem[MM4]{MM4} V.~Mazorchuk, V.~Miemietz. Morita theory for finitary $2$-categories. Quantum Topol. {\bf 7} (2016), no. 1, 1--28.
\bibitem[MM5]{MM5} V.~Mazorchuk, V.~Miemietz. Transitive representations of finitary $2$-categories. Trans. Amer. Math. Soc.  \textbf{368} (2016), no. 11, 7623--7644. 
 \bibitem[MM6]{MM6} V.~Mazorchuk, V.~Miemietz. Isotypic faithful $2$-representations of $\J$-simple fiat $2$-categories. Math. Z. \textbf{282} (2016), no. 1, 411--434.

%
%
\bibitem[NT]{NT} C.~N\u ast\u asescu, B.~Torrecillas. Torsion theories for coalgebras.
 J. Pure Appl. Algebra {\bf 97} (1994), no. 2, 203--220.
\bibitem[Ps]{Ps} C. Psaroudakis. Homological Theory of Recollements of Abelian Categories. J. Algebra
{\bf 398} (2014), 63--110.
\bibitem[PV]{PV} C. Psaroudakis, J. Vitoria. Recollements of Module Categories. J. Appl Categor Struct (2014) {\bf 22}, no. 4, 579--593.
\bibitem[Ro]{Ro} R.~Rouquier. $2$-Kac-Moody algebras. Preprint, arXiv:0812.5023 (2008). 
 \bibitem[SVV]{SVV}
P.~Shan, M.~Varagnolo, E.~Vasserot. On the center of quiver-Hecke algebras. Duke Math. J. 166 (2017), no. 6, 1005--1101. 
\end{thebibliography}
\end{document}